\newtheorem{theorem}{Theorem}[section]
\newtheorem*{theorem*}{Theorem}
\newtheorem*{proposition*}{Proposition}
\newtheorem{corollary}[theorem]{Corollary}
\newtheorem*{corollary*}{Corollary}
\newtheorem{conjecture}[theorem]{Conjecture}
\newtheorem{lemma}[theorem]{Lemma}
\newtheorem{definition}[theorem]{Definition}
\newtheorem*{definition*}{Definition}
\title{Ulam Sets in New Settings}
\author{Tej Bade}
\address{Phillips Exeter Academy, Exeter, NH 03833}
\email{tbade@exeter.edu}
\author{Kelly Cui}
\address{Leland High School, San Jose, CA 95120}
\email{kcui930@gmail.com}
\author{Antoine Labelle}
\address{Coll\`ege de Maisonneuve, Montreal, Quebec, Canada}
\email{e1995364@cmaisonneuve.qc.ca, antoinelab01@gmail.com}
\author{Deyuan Li}
\address{Yale University, New Haven, CT 06511}
\email{deyuan.li@yale.edu}
\begin{document}

\begin{abstract}
    The classical Ulam sequence is defined recursively as follows: $a_1=1$, $a_2=2$, and $a_n$, for $n > 2$, is the smallest integer not already in the sequence that can be written uniquely as the sum of two distinct earlier terms. This sequence is known for its mysterious quasi-periodic behavior and its surprising rigidity when we let $a_2$ vary. This definition can be generalized to other sets of generators in different settings with a binary operation and a valid notion of size. Since there is not always a natural linear ordering of the elements, the resulting collections are called Ulam sets. In this paper, we study Ulam sets in new settings. First, we investigate the structure of canonical Ulam sets in free groups; this is the first investigation of Ulam sets in noncommutative groups. We prove several symmetry results and prove a periodicity result for eventually periodic words with fixed prefixes. Then, we study Ulam sets in $\mathbb{Z}\times (\mathbb{Z}/n\mathbb{Z})$ and prove regularity for an infinite class of initial sets. We also examine an intriguing phenomenon about decompositions of later elements into sums of the generators. Finally, we consider $\mathcal{V}$-sets, a variant where the summands are not required to be distinct; we focus on $\mathcal{V}$-sets in $\mathbb{Z}^2$.
\end{abstract}
\maketitle
\section{Introduction}
\subsection{Background}
In 1964, Stanislaw Ulam \cite{ulam} introduced the following curious sequence of natural numbers: the first two elements are $1$ and $2$, and then we repeatedly choose the next element (in a greedy way) to be the smallest integer not already in the sequence that can be represented uniquely as the sum of two distinct previous elements. The first few terms are
$$1, 2, 3, 4, 6, 8, 11, 13, 16, 18, 26, \ldots.$$
This ``classical'' Ulam sequence is known to be quite chaotic, but it also exhibits very intriguing phenomena. Although the sequence appears to behave quite randomly, Steinerberger \cite{hidden signal} observed a ``hidden signal'': there seems to exist a real number $\lambda \approx 2.4434$ such that the Ulam sequence is far from uniformly distributed modulo $\lambda$. Since the publication of Steinerberger's result, the classical Ulam sequence has been the object of renewed attention \cite{algorithm, trillion, questions, ross-thesis}, but its behavior remains far from understood. Of course, one can obtain other Ulam sequences by starting with initial values other than $1,2$.  Some choices of initial values result in highly structured sequences \cite{regularity 4 v, finch regularity, regularity 2 v}, and others appear to result in highly irregular sequences like what appears in the classical Ulam sequence.

In 2018, Kravitz and Steinerberger \cite{noah} extended the notion of an Ulam sequence to settings other than the natural numbers, with the following caveat: when multiple elements have the same size, one is chosen arbitrarily to be added first. As a result, there can be no canonical notion of a sequence per se, and it makes more sense to study the unordered \emph{Ulam set}.  (The Ulam set arising from an initial set is well-defined as long as the notion of size satisfies some natural weak monotonicity conditions.)  Kravitz and Steinerberger focused on Ulam sets in $\mathbb{Z}^d$ (see also the work of \cite{rigidity2}), and they suggested several other settings for studying Ulam sets. The purpose of the present paper is to initiate the study of several of these variants.

One such variant, for example, consists of Ulam sets in $\mathbb{Z}\times (\mathbb{Z}/n\mathbb{Z})$. In this setting, Ulam sets can exhibit rather surprising new behavior. For instance, whereas Ulam sets in $\mathbb{Z}$ are always infinite, this is not always the case in $\mathbb{Z}\times (\mathbb{Z}/n\mathbb{Z})$. One example appears in $\mathbb{Z} \times (\mathbb{Z}/8\mathbb{Z})$, where the Ulam set generated by the initial set $\{(1,0),(1,1),(2,5)\}$ does not contain any points with $x$-coordinate larger than $51$. We will show in Section \ref{finiteness} that if $n<5$, then every Ulam set in $\mathbb{Z}\times (\mathbb{Z}/n\mathbb{Z})$ has infinitely many elements.

\subsection{General definition of Ulam sets and $\mathcal{V}$-sets}

Following the setup from Theorem 2 of \cite{noah}, we now formally define Ulam sets in general settings.  Let $G$ be a group (written multiplicatively), and fix a finite subset $S \subset G$ (the \emph{initial set} of generators).  Let $D_S$ denote the set of all elements of $G$ that can be expressed as a nonempty product of elements from $S$.  Suppose moreover that there exists a ``size'' function $f: D_S \to \mathbb{R}$ such that $f(xy)>\max\{f(x),f(y)\}$ for all $x, y \in D_S$, and $f^{-1}((-\infty,r])$ is finite for all $r \in \mathbb{R}$.  Then we define the \emph{Ulam set generated by S} (written $\mathcal{U}(S)$) as follows:
\begin{enumerate}
    \item We put the elements of $S$ into $\mathcal{U}(S)$.
    \item We then repeatedly add to $\mathcal{U}(S)$ the smallest (according to $f$) element of $D_S$ that is not already in $\mathcal{U}(S)$ and which is uniquely represented as the product of two distinct elements already in $\mathcal{U}(S)$. When there are multiple such smallest elements, we choose one to add arbitrarily.
\end{enumerate}
As described in \cite{noah}, since elements of $S$ affect the representations of only elements of greater size, valid elements of the same size can be added in $\mathcal{U}(S)$ in any arbitrary order without changing the resulting unordered set. When $G$ is abelian, we speak of sums rather than products and consider the representations $x+y$ and $y+x$ to be the same.

In \cite{V-sequences}, Kuca introduced a variant of Ulam sequences, called $\mathcal{V}$-sequences, in which the summands in a representation are not required to be distinct. Similarly, we can define \emph{$\mathcal{V}$-sets} in the same way as Ulam sets, except that the multiplicands or summands are not required to be distinct. When the ambient setting is clear, we denote by $\mathcal{V}(S)$ the $\mathcal{V}$-set generated by the initial set $S$.

\section{Main results}

\subsection{Ulam sets in free groups}

We begin in Section \ref{sec:free} with Ulam sets in the free group $F_2$.  We emphasize that this is the first ever investigation of Ulam sets in non-abelian groups.  In order to have a suitable notion of size, we will restrict our attention to the ``positive'' part of $F_2$, i.e., the set of nonidentity elements that can be expressed without the use of $0^{-1}$ or $1^{-1}$; this set can be identified with the set $\mathcal{W}_2$ of finite nonempty binary strings, where multiplication is given by concatenation. Given $w \in \mathcal{W}_2$, we define $f(w)$ to be the length of $w$.

We will focus on the case where the initial set $S$ contains only two elements; even this simple example exhibits substantially nontrivial phenomena.  In particular, we study $\mathcal{U}(\{0,1\})$, the Ulam set generated by the canonical elements $0$ and $1$.  The first few elements are $$\{0,1,01,10,001,011,100,110,\ldots \}.$$  Note that $\mathcal{U}(\{0,1\})$ exhibits universal behavior for the case where $S$ consists of two elements $v_1, v_2$ of the same size since the homomorphism sending $0$ to $v_1$ and $1$ to $v_2$ induces a bijection between the corresponding Ulam sets. We establish several symmetries of $\mathcal{U}(\{0,1\})$ and characterize the elements with exactly one $1$.

\newtheorem*{one1}{Theorem \ref{thm:one1}}
\begin{one1}
Let $u \in \mathcal{W}_2$ be a word of length $n$ with exactly one $1$, and let $i$ be the index of that $1$. The word $u$ is in $\mathcal{U}(\{0, 1\})$ if and only if $\binom{n-1}{i-1}$ is odd.
\end{one1}

We also work towards understanding the elements with exactly two $1$'s, and we show the somewhat surprising result that the gap between the $1$'s in such a word cannot be too large. We then exhibit infinite periodic structures in $\mathcal{U}(\{0,1\})$ in a way somewhat akin to the ``column phenomenon'' from \cite{noah}.

Finally, for the analogous $\mathcal{V}$-set (where we don't require the words forming a representation to be distinct), we find that whether or not a word is in $\mathcal{V}(\{0,1\})$ depends only on its length. The first few elements of this $\mathcal{V}$-set are $$\{0,1,00, 01,10, 11,0000,0001,\ldots \}.$$ We find a simple characterization of all the elements of $\mathcal{V}(\{0,1\})$:

\newtheorem*{freegroup-Vset}{Theorem \ref{thm:freegroup-Vset}}
\begin{freegroup-Vset}
A word $u$ of length $n$ is in $\mathcal{V}(\{0,1\})$ if and only if $n$ is a power of $2$.
\end{freegroup-Vset}

\subsection{Ulam sets in $\mathbb{Z} \times (\mathbb{Z}/n\mathbb{Z})$}

In Section \ref{ZxZn}, we investigate Ulam sets in $\mathbb{Z} \times (\mathbb{Z}/n\mathbb{Z})$, where the notion of size is $f((x,y))=x$ (and we choose $S$ so that each element has strictly positive first coordinate). As discussed in \cite{noah}, this setting is motivated by multiplicative Ulam sets in the complex numbers. Figure \ref{fig:ZxZ7}, for example, is the Ulam set $\mathcal{U}(\{(1,0), (1,1)\})$ in $\mathbb{Z}\times (\mathbb{Z}/7\mathbb{Z})$. We see that the set appears to exhibit chaotic behavior. It is not known whether this set is infinite or whether the density is the same in each row (when $y$ is fixed). Some other Ulam sets, however, can exhibit very structured properties.

\begin{figure}[htp!] 
    \centering
    \includegraphics[width=14cm]{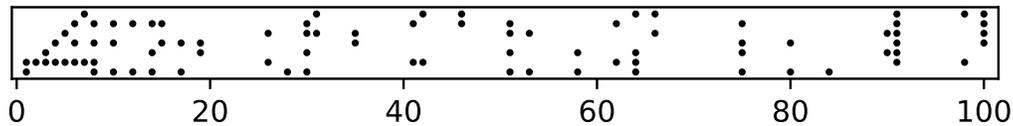}
    \caption{The Ulam set in $\mathbb{Z}\times (\mathbb{Z}/7\mathbb{Z})$ with initial set $\{(1,0), (1,1)\}$. Its associated lattice is generated by $(-7,7)$.}
    \label{fig:ZxZ7}
\end{figure}

We define the \emph{associated lattice} of the Ulam set generated by $\{v_1,\cdots, v_k\}$ to be the subgroup of $\mathbb{Z}^k$ consisting of the solutions to the equation $x_1v_1+\cdots+x_kv_k=0$. The structure of an Ulam set is uniquely determined by its associated lattice (see \cite{noah}), and it turns out that every Ulam set with two generators in an abelian group has the same associated lattice as that of an Ulam set in either $\mathbb{Z}^2$ or $\mathbb{Z} \times (\mathbb{Z}/n\mathbb{Z})$. Since the former setting has received attention previously in \cite{noah}, our investigation in this sense rounds out the study of Ulam sets with two generators in abelian groups.

Finch \cite{finch regularity} proved that an Ulam set in $\mathbb{Z}$ with finitely many even elements is regular (eventually periodic). Similarly, we establish a necessary and sufficient condition for an Ulam set in $\mathbb{Z} \times (\mathbb{Z}/n\mathbb{Z})$ to be regular.

\newtheorem*{E condition}{Theorem \ref{E condition}}
\begin{E condition}
An Ulam set $\mathcal{U}$ in $\mathbb{Z}\times (\mathbb{Z}/n\mathbb{Z})$ is regular if and only if there exists some regular subset $E\subset \mathbb{Z}\times (\mathbb{Z}/n\mathbb{Z})$ such that the sum of two elements of $E$ is never in $E$ and only finitely many elements of $\mathcal{U}$ are not in $E$.
\end{E condition}

In fact, Finch's characterization is a special case of our result, obtained by taking $n=1$ and $E$ to be the set of odd numbers.

Using these techniques, we show that Ulam sets with two generators and associated lattice generated by $(-2,b)$ for $b>3$ are regular, extending the work of Schmerl and Spiegel \cite{regularity 2 v}, who established the special case where $b$ is odd. These proofs are quite long and technical.

Since each element of an Ulam set (other than the elements of the generating set) has a unique representation as a sum of previous elements, it is possible to work backwards and keep track of overall decomposition into multiples of the original generators. We study these decompositions for Ulam sets in abelian groups with two generators and observe a very intriguing phenomenon: the ratio of the contributions of each generator is approximately the same for all elements. We show, as a weaker result, that the contributions of the two generators cannot be too skewed.

Finally, we investigate the conditions for an Ulam set in $\mathbb{Z} \times (\mathbb{Z}/n\mathbb{Z})$ to be finite.  (Recall that every Ulam set in $\mathbb{Z}^d$ is easily seen to be infinite.) As an example, the Ulam set $\mathcal{U}\{(1,0),(1,1),(2,5)\}$ in $\mathbb{Z} \times (\mathbb{Z}/8\mathbb{Z})$ is finite.

\begin{figure}[htp!] 
    \centering
    \includegraphics[width=14cm]{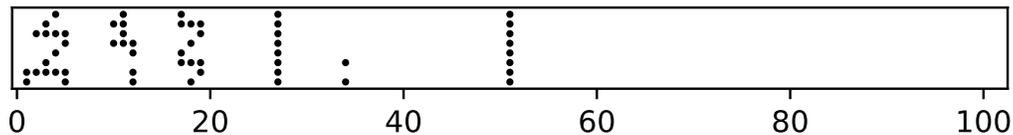}
    \caption{The Ulam set generated by $S=\{(1,0),(1,1),(2,5)\}$ in $\mathbb{Z} \times (\mathbb{Z}/8\mathbb{Z})$. The largest elements have $x$-coordinate $51$.}
    \label{fig:finite-set}
\end{figure}

We show that if such an Ulam set is finite, then it must have at least $5$ elements that assume the maximum value in the first coordinate.

\newtheorem*{col5}{Theorem \ref{5 or more in last column}}
\begin{col5}
Let $\mathcal{U}$ be a finite Ulam set in $\mathbb{Z} \times (\mathbb{Z}/n\mathbb{Z})$ and let $x_{\max}$ be the greatest $x$-coordinate of elements of $\mathcal{U}$. Then $\mathcal{U}$ contains at least $5$ elements of the form $(x_{\max},y)$.
\end{col5}

In particular, this implies that for $n<5$, all Ulam sets in $\mathbb{Z} \times (\mathbb{Z}/n\mathbb{Z})$ are infinite. Note that this is a tight bound since, for $n \ge 5$, the Ulam set $\mathcal{U}(\{(1,0),(1,1),\ldots, (1, n-2), (1, n-1)\})$ contains no element other than the generators.

\subsection{Higher-dimensional $\mathcal{V}$-sets}

In Section \ref{sec:V-sets}, we study $\mathcal{V}$-sets in $\mathbb{Z}^d$. Recall that a $\mathcal{V}$-set is a variant of an Ulam set where the summands in the representations need not be distinct. Kravitz and Steinerberger \cite{noah} demonstrated a ``column phenomenon'' for certain Ulam sets in $\mathbb{Z}^2$: if $S$ contains a single generator in the column $x=0$, then, for each fixed value of $x$, the set of values of $y$ such that $(x,y)$ is in $\mathcal{U}(S)$ is eventually periodic. An example of this behavior is shown in Figure~\ref{fig:colphen}.

\begin{figure}[htp!] 
    \centering
    \includegraphics[width=14cm]{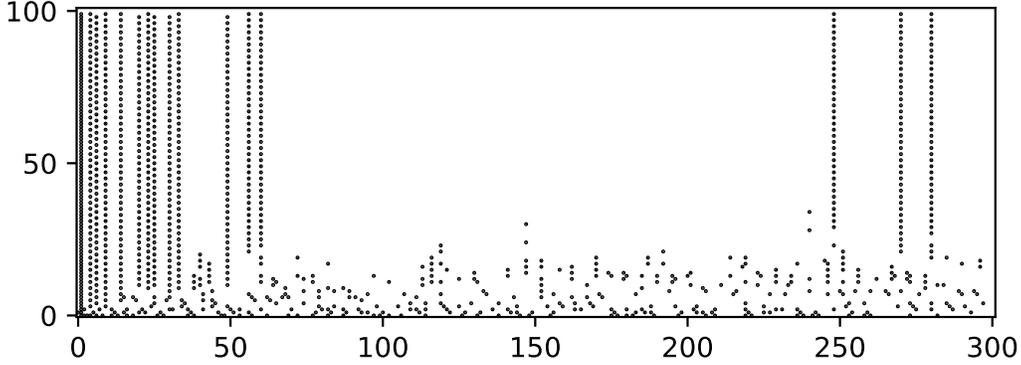}
    \caption{The Ulam set generated by $S=\{(1,0),(0,1),(2,0)\}$ in $\mathbb{Z}^2$. We see infinite periodic columns arising despite chaotic behavior near the $x$-axis.}
    \label{fig:colphen}
\end{figure}

We adapt their techniques to show that the same phenomenon persists for $\mathcal{V}$-sets in $\mathbb{Z}^2$.

\newtheorem*{colphen}{Theorem \ref{col phenomenon}}
\begin{colphen}
Let $\mathcal{S}$ either be an Ulam set or a $\mathcal{V}$-set in $\mathbb{Z}^2$ for which the column $x=0$ is eventually periodic. Then all of the columns of $\mathcal{S}$ are eventually periodic.
\end{colphen}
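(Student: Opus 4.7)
The plan is to induct on the column index $k$, adapting the argument of Kravitz and Steinerberger to the $\mathcal{V}$-set case and to general eventually periodic column-zero behavior. The base case $k = 0$ is exactly the hypothesis. For the inductive step, write $A_j := \{y : (j, y) \in \mathcal{S}\}$ and assume $A_0, \ldots, A_{k-1}$ are each eventually periodic with a common period $P$.

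Whether $(k, y) \in \mathcal{S}$ is governed by the number $r(y)$ of representations of $(k, y)$ as a pair of summands from $\mathcal{S}$, distinct for Ulam sets and possibly equal for $\mathcal{V}$-sets. I split $r(y) = r_A(y) + r_B(y)$ (plus an additional diagonal term in the $\mathcal{V}$ case), where $r_A$ counts splits with both summands in columns $1, \ldots, k-1$ and $r_B$ counts boundary splits $(0, b) + (k, y - b)$ with $b > 0$. The diagonal $\mathcal{V}$ term is the indicator that $k$ and $y$ are both even and $(k/2, y/2) \in \mathcal{S}$, which is eventually periodic in $y$ by the inductive hypothesis. A convolution lemma for eventually periodic subsets of $\mathbb{Z}_{\ge 0}$ then shows that, restricted to each residue class modulo $P$, $r_A(y)$ is either unbounded or eventually constant. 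Along residues where it is unbounded, $r(y) > 1$ for all large $y$, so $A_k$ is eventually devoid of such residues; along the remaining residues, $r_A$ behaves as an eventually periodic function of $y$.

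The central difficulty is the self-referential term $r_B$. I set up a finite-state dynamical system with state $\sigma(y) := (y \bmod P, \, A_k \cap [y - M, y - 1])$ for a lookback window $M$ chosen larger than all preperiod thresholds of the inductively known sets and of $r_A$. Once the transition $\sigma(y) \mapsto \sigma(y+1)$ is shown to be deterministic, the trajectory of $\sigma$ lies in a finite state space, must be eventually periodic, and yields the eventual periodicity of $A_k$. Determinism requires computing $r_B(y)$ from $\sigma(y)$, and the main obstacle is that contributions to $r_B(y)$ from summands $b > M$ in the infinite periodic tail of $A_0$ involve $A_k$ outside the lookback window. The resolution is another application of the convolution lemma: restricted to each residue of $y$ modulo $P$, this tail contribution is either unbounded, in which case that residue is self-consistently excluded from $A_k$ for large $y$ and the tail vanishes, or eventually constant in $y$ and hence encodable into the state by enlarging $M$. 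This self-consistency argument pins down the large-scale structure of $A_k$ enough to render the state transition deterministic, completing the induction.
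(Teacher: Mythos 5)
Your proposal is correct and takes essentially the same route as the paper's proof: induct on the column, show that the count of representations avoiding column $x=0$ is, on each residue class, either eventually $\ge 2$ or eventually constant, treat the periodic tail of column $0$ by the same unbounded-versus-eventually-constant dichotomy, and absorb the finite transient of column $0$ into a finite-state lookback recursion resolved by pigeonhole. The small imprecisions (the diagonal $\mathcal{V}$-term requires tracking the residue of $y$ modulo $2P$ rather than $P$, and an unbounded tail contribution on a residue class eventually excludes that class from $A_k$ rather than making the tail ``vanish'') do not affect the argument.
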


Next, we study the ``canonical'' $\mathcal{V}$-set in $\mathbb{Z}^2$ with the generating set $\{(0,1),(1,0)\}$. In contrast to the Ulam set setting, where starting with two generators in $\mathbb{Z}^2$ leads to simple lattice-like behavior, $\mathcal{V}(\{(0,1),(1,0)\})$ has a more complicated structure. We obtain an interesting result showing that it is not too chaotic.

\newtheorem*{v0cor}{Corollary \ref{V0 corollary}}
\begin{v0cor}
Let $$T=\{(1,1),(2,0),(0,2),(3,2),(2,3),(6,3),(3,6),(9,6),(6,9),(10,5),(5,10),(14,5),(5,14)\}.$$
Then every element of $\mathcal{V}(\{(0,1),(1,0)\})$ outside of $T$ (and the generators) must use an element of $T$ in its (unique) representation as a sum of two previous elements.
\end{v0cor}

Finally, in Section \ref{conclusion}, we raise several open questions and suggest avenues for future research on Ulam sets in settings other than $F_2$ and $\mathbb{Z}^d$.

\section{Ulam sets in Free Groups} \label{sec:free}

We begin by exhibiting a few elementary properties of $\mathcal{U}(\{0, 1\})$.

\subsection{Symmetries}

Let $u$ be a word on the alphabet $\{0,1\}$. We define the \emph{reverse} of $u$ (denoted $\overline{u}$) to be the word obtained by reversing the order of the characters in $u$.  We define the \emph{complement} of $u$ (denoted $\widehat{u}$) to be the word obtained by changing all of the $0$'s to $1$'s and $1$'s to $0$'s.
\begin{theorem}\label{thm:symmetry}
A word $u \in \mathcal{W}_2$ is contained in the Ulam set $\mathcal{U}(\{0, 1\})$ if and only if $\overline{u}$ is also contained in $\mathcal{U}(\{0, 1\})$. Similarly, $u \in \mathcal{U}(\{0, 1\})$ if and only if $\widehat{u} \in \mathcal{U}(\{0, 1\})$.
\end{theorem}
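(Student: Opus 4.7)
The plan is to proceed by strong induction on the length $n = f(u)$. The key observation is that both reverse and complement are length-preserving bijections on $\mathcal{W}_2$, and they interact nicely with concatenation: complement is a homomorphism, so $\widehat{vw} = \widehat{v}\widehat{w}$, while reverse is an anti-homomorphism, so $\overline{vw} = \overline{w}\,\overline{v}$. Because each operation is an involution, the biconditional in each part of the theorem reduces to proving a single direction: that if $u \in \mathcal{U}(\{0,1\})$, then $\widehat{u}, \overline{u} \in \mathcal{U}(\{0,1\})$.

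The base case $n=1$ is immediate: $u \in \{0,1\}$ is a generator, $\overline{u}=u$, and $\widehat{u}$ is the other generator, which is also in $\mathcal{U}(\{0,1\})$. For the inductive step, assume the claim holds for all words of length less than $n$, and let $u \in \mathcal{U}(\{0,1\})$ have length $n \geq 2$. Then there is a unique ordered pair of distinct elements $v, w \in \mathcal{U}(\{0,1\})$, each of length less than $n$, with $u = vw$. Applying the inductive hypothesis yields $\widehat{v}, \widehat{w}, \overline{v}, \overline{w} \in \mathcal{U}(\{0,1\})$, and injectivity of the two operations ensures that $\widehat{v} \neq \widehat{w}$ and $\overline{v} \neq \overline{w}$. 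The factorizations $\widehat{u} = \widehat{v}\widehat{w}$ and $\overline{u} = \overline{w}\,\overline{v}$ therefore exhibit $\widehat{u}$ and $\overline{u}$ as products of two distinct shorter elements of $\mathcal{U}(\{0,1\})$.

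It remains to verify that each of these factorizations is unique. Suppose $\widehat{u} = ab$ for some distinct $a, b \in \mathcal{U}(\{0,1\})$, each of length less than $n$; applying complement to this equation gives $u = \widehat{a}\widehat{b}$, and the inductive hypothesis (applied to $a$ and $b$) places $\widehat{a}, \widehat{b}$ in $\mathcal{U}(\{0,1\})$. The uniqueness of the factorization of $u$ then forces $(\widehat{a}, \widehat{b}) = (v, w)$, hence $(a,b) = (\widehat{v}, \widehat{w})$. The argument for reverse is entirely parallel: a factorization $\overline{u} = ab$ becomes $u = \overline{b}\,\overline{a}$, and the uniqueness of the decomposition of $u$ forces $(a,b) = (\overline{w}, \overline{v})$.

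I do not anticipate a substantive obstacle; the result should follow almost formally from the compatibility of the two operations with concatenation. The only subtlety worth attention is the non-commutative setting: because representations are ordered pairs, one must handle reverse as an anti-homomorphism rather than a homomorphism, and keep the orders consistent when transporting decompositions back and forth. Once this bookkeeping is in place, the two symmetries follow from a single inductive template.
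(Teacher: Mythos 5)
Your proposal is correct and follows essentially the same route as the paper's proof: strong induction on word length, using that complement is a homomorphism and reverse an anti-homomorphism for concatenation, transporting the unique factorization of $u$ to one of $\widehat{u}$ and $\overline{u}$, and pulling back any hypothetical second factorization to contradict uniqueness for $u$. Your explicit remarks that the involution property reduces the biconditional to one direction and that injectivity preserves distinctness of the factors are small clarifications the paper leaves implicit, but the argument is the same.
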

\begin{proof}
Because the two halves of the theorem are similar, we will be proving them in parallel. We proceed by induction on the length of $u$. The base case is where $u$ has length $1$.  We know that the words of length $1$ in $\mathcal{U}(\{0, 1\})$ are precisely $0$ and $1$; in particular, the reverse and the complement of each of these words are also in the set. Now we proceed with the inductive step. Assume that if a word of length $k < n$ is in the Ulam set $\mathcal{U}(\{0, 1\})$, then both its reverse and complement are also in $\mathcal{U}(\{0, 1\})$. Let $u$ be a word of length $n$ in $\mathcal{U}(\{0, 1\})$ with unique representation $u=v*w$ (where $v \neq w$). Note that $\overline{u} = \overline{w}*\overline{v}$ and $\widehat{u} = \widehat{v}*\widehat{w}$, where $\overline v$, $\overline w$, $\widehat v,$ and $\widehat{w}$ are in $\mathcal{U}(\{0, 1\})$ by the inductive hypothesis. This shows that $\overline u$ and $\widehat{u}$ each have at least one representation. 

For the first statement, assume for the sake of contradiction that there exists a second representation $\overline{u} = \overline{y}*\overline{x}$, where $\overline x$ and $\overline y$ are distinct elements of $\mathcal{U}(\{0, 1\})$ and, moreover, $\overline{x} \neq \overline{v}$. Since $\overline{u} = \overline{y}*\overline{x}$, we also have $u = x*y$, where $x, y \in \mathcal{U}(\{0, 1\})$ by the inductive hypothesis (because $x$ and $y$ are strictly shorter than $u$). However, this contradicts $u$ having a unique representation, so $\overline{u}$ must be in $\mathcal{U}(\{0, 1\})$. For the second statement, we assume for the sake of contradiction that there exists a second representation $\widehat{u} = \widehat{x}*\widehat{y}$, where $\widehat x, \widehat y \in \mathcal{U}(\{0, 1\})$ and $\widehat{x} \neq \widehat{v}$. Since $\widehat{u} = \widehat{x}*\widehat{y}$, we also have $u = x*y$, where $x,y \in \mathcal{U}(\{0, 1\})$ by the inductive hypothesis. Once again, this contradicts $u$ having a unique representation, so $\widehat{u} \in \mathcal{U}(\{0, 1\})$.
\end{proof}

We define a \emph{palindrome} to be a word $p$ such that $p = \overline{p}$.

\begin{corollary}
The only palindromes of odd length in $\mathcal{U}(\{0, 1\})$ are $0$ and $1$. 
\end{corollary}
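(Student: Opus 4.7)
The plan is to combine the symmetry result (Theorem \ref{thm:symmetry}) with the uniqueness of representation. Suppose, for contradiction, that $p \in \mathcal{U}(\{0,1\})$ is a palindrome of odd length $n \geq 3$. Then $p$ is not a generator, so it admits a unique representation $p = v*w$ with distinct $v, w \in \mathcal{U}(\{0,1\})$.

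Now I would use the palindrome condition to produce a ``reflected'' representation. Since $p = \overline{p}$, applying reversal to $p = v*w$ gives $p = \overline{p} = \overline{w}*\overline{v}$. By Theorem \ref{thm:symmetry}, both $\overline{v}$ and $\overline{w}$ lie in $\mathcal{U}(\{0,1\})$, and they are distinct because $v \neq w$. Hence $(\overline{w}, \overline{v})$ is a valid ordered representation of $p$ as a concatenation of two distinct previous elements. By uniqueness, this ordered pair must coincide with $(v, w)$, which forces $v = \overline{w}$ (equivalently $w = \overline{v}$).

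The key consequence is that $|v| = |\overline{w}| = |w|$, so $n = |p| = |v| + |w| = 2|v|$ is even, contradicting our assumption that $n$ is odd. Therefore no odd-length palindrome of length at least $3$ can belong to $\mathcal{U}(\{0,1\})$, leaving only the length-$1$ palindromes $0$ and $1$. I do not anticipate any serious obstacle here: the argument is a short contradiction, and the only mildly delicate point is verifying that $(\overline{w}, \overline{v})$ genuinely qualifies as a representation (distinct elements in $\mathcal{U}(\{0,1\})$), which is immediate from Theorem \ref{thm:symmetry} and $v \neq w$.
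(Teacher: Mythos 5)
Your proof is correct and uses the same idea as the paper: reversing the unique representation $p=v*w$ of an odd-length palindrome yields a second representation $\overline{w}*\overline{v}$, which must be distinct from the first because $v$ and $w$ have different lengths when $|p|$ is odd. The paper phrases this as directly exhibiting two distinct representations while you phrase it as a contradiction with uniqueness, but the argument is essentially identical.
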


\begin{proof}
Let $p$ be a palindrome with odd length other than $0$ and $1$. In particular, $p$ has length at least $3$. We will show that the existence of one representation of $p$ as the concatenation of two distinct previous elements of $\mathcal{U}(\{0, 1\})$ implies the existence of another such representation of $p$. Suppose $p = u*v$, where $u, v \in \mathcal{U}(\{0, 1\})$. By Theorem \ref{thm:symmetry}, the elements $\overline{u}$ and $\overline{v}$ must also be in $\mathcal{U}(\{0, 1\})$. Since $p$ is a palindrome, it may also be represented as $p = \overline{p}=\overline{v}*\overline{u}$, where $\overline{v} \neq u$ since $u$ and $v$ must have different lengths as $p$ has odd length. Therefore, $p$ cannot be in $\mathcal{U}(\{0, 1\})$.
\end{proof}

\subsection{Characterizing words with a small number of $1$'s}

We have demonstrated symmetries among words in $\mathcal{U}(\{0, 1\}).$ Now we focus on figuring out whether a specific word is in the set based on its actual sequence of $0$'s and $1$'s. Specifically, we begin with words with a small number of $1$'s.  Because of Theorem~\ref{thm:symmetry}, this discussion also pertains to words with a small number of $0$'s. We define the \emph{index} of a letter in a word to be the number of letters to its left (including the letter itself). 
\begin{theorem}\label{thm:one1}
Let $u \in \mathcal{W}_2$ be a word of length $n$ with exactly one $1$, and let $i$ be the index of that $1$. The word $u$ is in $\mathcal{U}(\{0, 1\})$ if and only if $\binom{n-1}{i-1}$ is odd.
\end{theorem}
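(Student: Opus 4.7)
The plan is to induct on $n$, reducing the question of membership of $u_{n,i} := 0^{i-1}10^{n-i}$ in $\mathcal{U}(\{0,1\})$ to a parity recurrence that is precisely Pascal's triangle mod $2$. The key observation will be that the only possible decompositions of $u_{n,i}$ as a concatenation of two distinct elements of $\mathcal{U}(\{0,1\})$ come from ``peeling off'' a single $0$ from the left or the right.

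As a preliminary step, I would show that the only all-$0$ word in $\mathcal{U}(\{0,1\})$ is $0$ itself (and dually, by Theorem \ref{thm:symmetry}, the only all-$1$ word is $1$). Indeed, suppose $0^k \in \mathcal{U}(\{0,1\})$ for some minimal $k \geq 2$. Any representation $0^k = 0^a * 0^b$ with distinct factors in $\mathcal{U}(\{0,1\})$ would force $a, b \in \{1, \dots, k-1\}$ and, by minimality of $k$, further force $a = b = 1$, contradicting distinctness.

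Now fix $n \geq 2$ and $1 \leq i \leq n$. Since $u_{n,i}$ contains exactly one $1$, any concatenation $u_{n,i} = v * w$ with $v, w \in \mathcal{U}(\{0,1\})$ has one factor containing the $1$ and the other all zeros. By the preliminary step, the all-zero factor is $0$, and so the only candidate decompositions are
\begin{align*}
u_{n,i} &= u_{n-1,i} * 0 \qquad \text{(when } i \leq n-1\text{),} \\
u_{n,i} &= 0 * u_{n-1,i-1} \qquad \text{(when } i \geq 2\text{).}
\end{align*}
These are genuinely distinct ordered factorizations whenever both apply, and in each the two factors are automatically distinct (one contains a $1$ and the other does not). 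Setting $a_{n,i} = 1$ if $u_{n,i} \in \mathcal{U}(\{0,1\})$ and $0$ otherwise, and using the convention $a_{m,0} = a_{m,m+1} = 0$, the uniqueness-of-representation condition in the definition of an Ulam set gives
$$a_{n,i} = a_{n-1,i-1} \oplus a_{n-1,i},$$
with base case $a_{1,1} = 1$.

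This recurrence is exactly Pascal's triangle reduced modulo $2$, so $a_{n,i} \equiv \binom{n-1}{i-1} \pmod{2}$, which is the claimed characterization. The main (mild) obstacle is the preliminary step ruling out long runs of zeros; after that, the argument is a clean induction, and I would be careful only to verify the boundary cases $i = 1$ and $i = n$ separately (where only one of the two candidate decompositions exists, consistent with the XOR recurrence under the stated boundary convention).
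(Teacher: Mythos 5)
Your proposal is correct and follows essentially the same route as the paper: induct on the length, observe that $0$ is the only all-zero element of $\mathcal{U}(\{0,1\})$ so the only candidate factorizations peel a single $0$ off either end, and conclude that membership satisfies the Pascal recurrence modulo $2$. The only difference is cosmetic — you justify the all-zeros claim explicitly and phrase the inductive step as the XOR recurrence $a_{n,i}=a_{n-1,i-1}\oplus a_{n-1,i}$, whereas the paper invokes Pascal's identity on the binomial coefficients directly.
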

\begin{proof}
Once again, we proceed with induction on the length of the word. The base case is where $u$ has length $1$ or $2$. The only word of length $1$ in $\mathcal{W}_2$ with exactly one $1$ is $1$. In this case, $n = 1$, $i = 1,$ $\binom{0}{0} = 1$ is odd, and the element $1$ is in fact in $\mathcal{U}(\{0, 1\})$. The words of length $2$ in $\mathcal{W}_2$ with exactly one $1$ are $01$ and $10$. For the former, $n = 2$, $i = 2$, $\binom{1}{1} = 1$ is odd, and $01$ is in $\mathcal{U}(\{0, 1\}).$ For the latter, $n = 2$, $i = 1$, $\binom{1}{0} = 1$ is odd, and $10$ is also in $\mathcal{U}(\{0, 1\})$. 

For the inductive step, we assume that the statement holds for all words of length $k \le n$. Let $u \in \mathcal{W}_2$ be a word of length $n + 1$ with exactly one $1$, and let $i$ be the index of the $1$. Note that $0$ is the only element of $\mathcal{U}(\{0, 1\})$ consisting of all $0$'s.  Thus, the only way to obtain $u$ as a concatenation of two previous elements is to concatenate $0$ to a word of length $n$ with exactly one $1$. Let $v$ denote the word of length $n$ with exactly one $1$ at index $i$ and $0$'s everywhere else; let $w$ denote the word of length $n$ with exactly one $1$ at index $i-1$ and $0$'s everywhere else. We see that $u \in \mathcal{U}(\{0, 1\})$ if and only if exactly one of $v$ and $w$ is in $\mathcal{U}(\{0, 1\})$. The word $v$ has binomial coefficient $\binom{n-1}{i - 1}$ and the word $w$ has binomial coefficient $\binom{n - 1}{i - 2}$.

By our inductive hypothesis, $u$ has a unique representation and is in $\mathcal{U}(\{0, 1\})$ if and only if exactly one of $\binom{n - 1}{i - 2}$ and $\binom{n - 1}{i - 1}$ is odd. Therefore, we must have 
$$\binom{n - 1}{i - 2} + \binom{n - 1}{i - 1} \equiv 1 \pmod{2}.$$
By Pascal's Identity, $\binom{n - 1}{i - 2} + \binom{n - 1}{i - 1} = \binom{n}{i - 1}$, so our equation becomes $\binom{n}{i - 1} \equiv 1 \pmod{2}$. Note that $\binom{n}{i - 1}$ is the binomial coefficient for $u$, so $u \in \mathcal{U}(\{0, 1\})$ if and only if its binomial coefficient is odd, which completes our induction.
\end{proof}
We can now tell if a word with exactly one $1$ is in $\mathcal{U}(\{0, 1\})$ by examining the parity of its corresponding binomial coefficient. Note that a word $u \in \mathcal{U}(\{0, 1\})$ of length $n$ with exactly one $1$ at index $i$ has the binomial coefficient $\binom{n - 1}{i - 1}$, which corresponds to the $i$-th number of the $(n-1)$-th row of Pascal's Triangle.

Gould's sequence \cite{gould} is an integer sequence that counts the number of odd terms in each row of Pascal's Triangle. Specifically, the $n$-th term of Gould's sequence is the number of odd numbers in the $(n-1)$-th row of Pascal's Triangle. The first few terms of Gould's sequence are $1, 2, 2, 4, 2, 4, 4, 8, 2, 4, 4, 8, 4, 8, 8, 16$.

\begin{corollary}
The number of words of length $n$ in $\mathcal{U}(\{0, 1\})$ with exactly one $1$ is the $n$-th number in Gould's sequence.
\end{corollary}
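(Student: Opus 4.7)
The plan is to derive this corollary as a direct counting consequence of Theorem \ref{thm:one1}. First I would parameterize: a word of length $n$ with exactly one $1$ is uniquely determined by the index $i \in \{1, 2, \ldots, n\}$ of that $1$. So the number of such words in $\mathcal{U}(\{0,1\})$ equals the number of indices $i$ in that range for which the corresponding word lies in $\mathcal{U}(\{0,1\})$.

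Next I would invoke Theorem \ref{thm:one1}, which tells us that the word with its unique $1$ at index $i$ belongs to $\mathcal{U}(\{0,1\})$ if and only if $\binom{n-1}{i-1}$ is odd. Therefore the count we want is exactly the cardinality of the set
\[
\{i \in \{1, \ldots, n\} : \binom{n-1}{i-1} \text{ is odd}\}.
\]
Re-indexing via $j = i-1$, this is the number of $j \in \{0, 1, \ldots, n-1\}$ such that $\binom{n-1}{j}$ is odd, which is precisely the number of odd entries in the $(n-1)$-th row of Pascal's Triangle.

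Finally, by the definition of Gould's sequence recalled just before the corollary statement, this quantity is exactly the $n$-th term of Gould's sequence, completing the proof. There is essentially no obstacle here: the corollary is an immediate translation of Theorem \ref{thm:one1} through the defining description of Gould's sequence, so the only care needed is to get the index shift $i \mapsto i-1$ correct when matching rows of Pascal's Triangle.
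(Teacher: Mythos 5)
Your proposal is correct and matches the paper's (implicit) reasoning exactly: the paper states this corollary without proof as an immediate consequence of Theorem \ref{thm:one1} together with the definition of Gould's sequence, which is precisely the index-shift counting argument you give.
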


Theorem \ref{thm:one1} fully characterizes all words in $\mathcal{U}(\{0,1\})$ with exactly one $1$. We now analyze words in $\mathcal{U}(\{0,1\})$ with exactly two $1$'s.

\begin{theorem} \label{thm:consec1}
Let $u \in \mathcal{W}_2$ be a word of length $n \ge 2$ with exactly two $1$'s such that the $1$'s are consecutive. The word $u$ is in $\mathcal{U}(\{0, 1\})$ if and only if $n$ is odd.
\end{theorem}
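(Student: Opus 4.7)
The plan is to enumerate all possible representations $u = v * w$ (with distinct $v, w \in \mathcal{U}(\{0,1\})$) and then induct on $n$. Write $u = 0^a 1 1 0^b$ with $a + b = n - 2$, and let $c(a, b) \in \{0, 1\}$ be the indicator that $u \in \mathcal{U}(\{0,1\})$; the goal is to show $c(a, b) = 1$ iff $a + b$ is odd.

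Since $0$ is the only all-zero word in $\mathcal{U}(\{0,1\})$, any split that keeps both $1$'s on the same side must have the other side equal to the generator $0$. Combined with the unique ``mixed'' option that places one $1$ on each side, this yields at most three candidate representations: (A) $v = 0$ and $w = 0^{a-1} 1 1 0^b$, requiring $a \geq 1$; (B) the mirror version $v = 0^a 1 1 0^{b-1}$, $w = 0$, requiring $b \geq 1$; and (C) $v = 0^a 1$, $w = 1 0^b$. By Theorem \ref{thm:one1}, the two factors in (C) always lie in $\mathcal{U}(\{0,1\})$ since $\binom{a}{a} = \binom{b}{0} = 1$ are odd, and they are distinct unless $a = b = 0$. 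The validity of (A) and (B), on the other hand, amounts exactly to $c(a-1, b) = 1$ and $c(a, b-1) = 1$, respectively.

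The proof proceeds by induction on $n$. The base case $n = 2$ gives $u = 11$, for which all three candidate representations are unavailable, so $c(0, 0) = 0$, matching $a + b = 0$ even. For the inductive step with $a, b \geq 1$, the word $u$ has a unique representation iff (C) is the only valid decomposition, i.e., iff $c(a-1, b) = c(a, b-1) = 0$; by the inductive hypothesis this happens iff $(a-1) + b$ and $a + (b-1)$ are both even, i.e., iff $a + b$ is odd. In the boundary subcases $a = 0$ or $b = 0$, one of (A) or (B) is automatically absent, so the recursion simplifies to $c(0, b) = 1 - c(0, b-1)$ (and symmetrically for $c(a, 0)$), which together with $c(0, 0) = 0$ gives the parity conclusion by induction on $b$ (resp.\ $a$).

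Conceptually no step is hard; the only care required is verifying that (A), (B), (C) exhaust all possible decompositions, which reduces to the observation that $0$ is the unique all-zero word in $\mathcal{U}(\{0,1\})$, and handling the boundaries $a = 0$ and $b = 0$ without double counting.
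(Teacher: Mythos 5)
Your proof is correct and follows essentially the same route as the paper's: induct on $n$, observe that the decomposition $0^a1 * 10^b$ is always available by Theorem \ref{thm:one1}, and use the fact that $0$ is the only all-zero word in $\mathcal{U}(\{0,1\})$ to reduce the remaining candidate representations to stripping a single $0$ from either end, whose validity is controlled by the inductive hypothesis. The only difference is organizational: the paper invokes Theorem \ref{thm:symmetry} to assume without loss of generality that $a \ge 1$, whereas you handle the boundary cases $a=0$ and $b=0$ explicitly.
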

\begin{proof}
We proceed by induction on the length of the word. The base cases are $n=2$ and $n=3$. The only word of length $2$ with two consecutive $1$'s (and the rest $0$'s) is $11$; this is not in $\mathcal{U}(\{0, 1\})$. The words of length $3$ with two consecutive $1$'s (and the rest $0$'s) are $011$ and $110$; both are in $\mathcal{U}(\{0, 1\})$ by direct computation. 

We now perform the inductive step.  Assume that the theorem holds for all words of length strictly smaller than $n$.  Consider the word $u$ of length $n$ which consists of $k$ $0$'s, followed by two $1$'s and then $\ell$ more $0$'s, where we must have $k \ge 1$ or $\ell \ge 1$ since $n \ge 4$. By Theorem \ref{thm:symmetry}, we can assume without loss of generality that $k \ge 1$. One representation of $u$ is the concatenation
$$u=\underbrace{0 \cdots 0}_{k}11\underbrace{0 \cdots 0}_{\ell}=\underbrace{0 \cdots 0}_{k}1*1\underbrace{0 \cdots 0}_{\ell},$$
where the elements $\underbrace{0 \cdots 0}_{k}1$ and $1\underbrace{0 \cdots 0}_{\ell}$ are in $\mathcal{U}(\{0, 1\})$ by Theorem \ref{thm:one1}.
If $n$ is even, a second representation is given by 
$$u=\underbrace{0 \cdots 0}_{k}11\underbrace{0 \cdots 0}_{\ell}=0*\underbrace{0 \cdots 0}_{k-1}11\underbrace{0 \cdots 0}_{\ell},$$
meaning that $u$ is not in $\mathcal{U}(\{0, 1\})$. If $n$ is odd, we claim that there is no second representation. Recall that $0$ is the only element of $\mathcal{U}(\{0, 1\})$ consisting of all $0$'s, so the only possible second representation of $u$ would have the form 
$$u=0*\underbrace{0 \cdots 0}_{k-1}11\underbrace{0 \cdots 0}_{\ell} \quad \text{or} \quad u=\underbrace{0 \cdots 0}_{k}11\underbrace{0 \cdots 0}_{\ell-1}*0.$$
But, by the inductive hypothesis, neither $\underbrace{0 \cdots 0}_{k-1}11\underbrace{0 \cdots 0}_{\ell}$ nor $\underbrace{0 \cdots 0}_{k}11\underbrace{0 \cdots 0}_{\ell-1}$ is in $\mathcal{U}(\{0, 1\})$, which completes the proof.
\end{proof}

\begin{theorem} \label{thm:101}
Let $u \in \mathcal{W}_2$ be a word of length $n \ge 5$ with exactly two $1$'s such that the $1$'s are separated by exactly one $0$. The word $u$ is in $\mathcal{U}(\{0, 1\})$ if and only if $n$ is even.
\end{theorem}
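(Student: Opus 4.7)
The plan is to induct on the length $n$ of $u$, writing $u = 0^k 1 0 1 0^\ell$ with $k + \ell = n - 3 \ge 2$. The base case $n = 5$ is handled by direct enumeration of the three candidates $10100$, $01010$, and $00101$: a short calculation using Theorem \ref{thm:one1} shows that $10100$ has the two representations $1 * 0100$ and $10 * 100$, the word $01010$ has no valid representation at all, and $00101$ is the reverse of $10100$ so Theorem \ref{thm:symmetry} handles it.

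For the inductive step with $n \ge 6$, the main idea is to enumerate every way to write $u = v * w$ with $v, w \in \mathcal{U}(\{0,1\})$ distinct. Because $0$ is the only element of $\mathcal{U}(\{0,1\})$ consisting entirely of $0$'s, any factor that is a string of $0$'s must equal $0$. Together with the fact that $u$ has its two $1$'s and middle $0$ in a fixed block, this restricts the possible splits to exactly four: (A) $v = 0$ and $w = 0^{k-1} 1 0 1 0^\ell$, which requires $k \ge 1$; (B) $v = 0^k 1$ and $w = 0 1 0^\ell$; (C) $v = 0^k 1 0$ and $w = 1 0^\ell$; and (D) $v = 0^k 1 0 1 0^{\ell-1}$ and $w = 0$, which requires $\ell \ge 1$. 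Checking $v \neq w$ rules out only sub-cases with $n \le 4$, so distinctness is automatic in our range.

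Each split's validity then reduces to an easy check. For (B) and (C), all factors have exactly one $1$, so Theorem \ref{thm:one1} gives: (B) is valid iff $\ell$ is even, and (C) is valid iff $k$ is even. For (A) and (D), the nontrivial factor is itself of the form $0^{k'} 1 0 1 0^{\ell'}$ with length $n - 1$, so the inductive hypothesis applies and says: (A) is valid iff $k \ge 1$ and $n - 1$ is even (with $n - 1 \ge 5$), and similarly for (D). The plan is then to case on the parity of $n$. If $n$ is even, then $n - 1$ is odd, so (A) and (D) are invalid, while $k + \ell$ is odd, forcing exactly one of $k, \ell$ to be even and hence exactly one of (B), (C) to be valid, giving the unique representation that places $u$ in $\mathcal{U}(\{0,1\})$. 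If $n$ is odd with $n \ge 7$, then $k + \ell$ is even so $k, \ell$ have the same parity: when both are even, (B) and (C) are both valid; when both are odd, (A) and (D) are both valid (since then $k, \ell \ge 1$). In each sub-case there are at least two representations, so $u \notin \mathcal{U}(\{0,1\})$.

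The main obstacle is not the parity argument itself but the careful bookkeeping in the enumeration step: verifying that the four split types are exhaustive, and handling the edge cases $k = 0$ or $\ell = 0$ where (A) or (D) is unavailable but the count still works out. Once this setup is in place, the induction proceeds mechanically, with Theorem \ref{thm:one1} doing the work for the single-$1$ factors and the inductive hypothesis handling the longer two-$1$ factors of the same form.
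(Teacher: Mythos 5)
Your proposal is correct and follows essentially the same route as the paper's proof: induction on $n$, enumeration of the four possible split points (using that $0$ is the only all-zeros element of $\mathcal{U}(\{0,1\})$), Theorem \ref{thm:one1} for the single-$1$ factors, and the inductive hypothesis plus a parity count for the rest. The only differences are organizational — you case on the parities of $n$, $k$, $\ell$ directly instead of using Theorem \ref{thm:symmetry} to assume $k \ge 1$ and splitting on $\ell = 0$ versus $\ell \ge 1$, and you need only $n=5$ as a base case — but the substance is the same.
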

\begin{proof}
Again, we proceed by induction on $n$. The base cases $n=5$ and $n=6$ are easy to check by direct computation. 

For the inductive step, assume that the statement holds for all values strictly less than $n$. Consider a word $u$ of length $n$ consisting of $k$ $0$'s followed by $101$ and then $\ell$ $0$'s. Since $n \ge 5$, we must have $k \ge 1$ or $\ell \ge 1$. By Theorem \ref{thm:symmetry}, we can assume without loss of generality that $k \ge 1$. 

\textbf{Case $1$: $\ell=0$.} One representation of $u$ is the concatenation
$$u=\underbrace{0 \cdots 0}_{k}101=\underbrace{0 \cdots 0}_{k}1*01,$$
where $01$ and $\underbrace{0 \cdots 0}_{k}1$ are both elements of $\mathcal{U}(\{0,1\})$ by Theorem \ref{thm:one1}. If $n$ is odd, then a second representation of $u$ as the concatenation of words in $\mathcal{U}(\{0,1\})$ is given by 
$$u=\underbrace{0 \cdots 0}_{k}101=\underbrace{0 \cdots 0}_{k}10*1,$$
meaning that $u$ is not in $\mathcal{U}(\{0,1\})$. If $n$ is even, this second representation fails, since $\underbrace{0 \cdots 0}_{k}10$ is not an element of $\mathcal{U}(\{0,1\})$ by Theorem \ref{thm:one1}. We claim there is no second representation if $n$ is even. Since $0$ is the only element of $\mathcal{U}(\{0,1\})$, the only other representation of $u$ would be
$$u=0*\underbrace{0 \cdots 0}_{k-1}101.$$
But by the inductive hypothesis, $\underbrace{0 \cdots 0}_{k-1}101$ is not in $\mathcal{U}(\{0,1\})$, so $u$ must be in $\mathcal{U}(\{0,1\})$.

\textbf{Case 2: $\ell \ge 1$.} If $n$ is odd, then $u$ can be represented as 
$$u=\underbrace{0 \cdots 0}_{k}101\underbrace{0 \cdots 0}_{\ell}=0*\underbrace{0 \cdots 0}_{k-1}101\underbrace{0 \cdots 0}_{\ell}=\underbrace{0 \cdots 0}_{k}101\underbrace{0 \cdots 0}_{\ell-1}*0,$$
where $\underbrace{0 \cdots 0}_{k-1}101\underbrace{0 \cdots 0}_{\ell}$ and $\underbrace{0 \cdots 0}_{k}101\underbrace{0 \cdots 0}_{\ell-1}$ are both in $\mathcal{U}(\{0,1\})$ by our inductive hypothesis. Thus, $u$ is excluded from $\mathcal{U}(\{0,1\})$ when $n$ is odd.

If $n$ is even, exactly one of $\underbrace{0 \cdots 0}_{k}10$ and $01\underbrace{0 \cdots 0}_{\ell}$ will have even length. By Theorem \ref{thm:one1}, exactly one of
$$u=\underbrace{0 \cdots 0}_{k}101\underbrace{0 \cdots 0}_{\ell}=\underbrace{0 \cdots 0}_{k}10*1\underbrace{0 \cdots 0}_{\ell} \quad \text{and} \quad u=\underbrace{0 \cdots 0}_{k}1*01\underbrace{0 \cdots 0}_{\ell}$$
will be a representation of $u$ as the concatenation of words in $\mathcal{U}(\{0,1\})$.

Since $0$ is the only word in $\mathcal{U}(\{0,1\})$ that consists entirely of $0$'s, the remaining two possible representations of $u$ are
$$u=\underbrace{0 \cdots 0}_{k}101\underbrace{0 \cdots 0}_{\ell}=0*\underbrace{0 \cdots 0}_{k-1}101\underbrace{0 \cdots 0}_{\ell} \quad \text{and} \quad u=\underbrace{0 \cdots 0}_{k}101\underbrace{0 \cdots 0}_{\ell-1}*0.$$

But by our inductive hypothesis, neither $\underbrace{0 \cdots 0}_{k-1}101\underbrace{0 \cdots 0}_{\ell}$ nor $\underbrace{0 \cdots 0}_{k}101\underbrace{0 \cdots 0}_{\ell-1}$ is in $\mathcal{U}(\{0,1\}),$ which means $u$ is in $\mathcal{U}(\{0,1\})$ if $n$ is even.
\end{proof}

While Theorem \ref{thm:consec1} and Theorem \ref{thm:101} characterize an infinite set of words with exactly two $1$'s in $\mathcal{U}(\{0,1\})$ satisfying specific conditions, they do not apply to all words with exactly two $1$'s. We now give a general necessary condition for a word with two $1$'s to be in $\mathcal{U}(\{0,1\})$.

\begin{theorem} \label{thm:dist1}
Let $u \in \mathcal{W}_2$ be a word of length $n \ge 2$ with exactly two $1$'s, and let $i_1 < i_2$ be the indices of the two $1$'s. If $u$ is in $\mathcal{U}(\{0, 1\})$, then 
$$i_2-i_1 \le \frac{n}{2}.$$
\end{theorem}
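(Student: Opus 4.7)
The plan is to prove the contrapositive by strong induction on $n$. Set $a = i_1 - 1$, $b = i_2 - i_1 - 1$, $c = n - i_2$, so $u = 0^a 1 0^b 1 0^c$ and the desired bound reads $b \le a + c$. The base cases $n = 2, 3$ are immediate by inspection. For the inductive step, assume the theorem for shorter words and $b > a + c$; we will show that $u$ has either $0$ or at least $2$ representations. Any representation $u = v * w$ with $v \ne w$ in $\mathcal{U}(\{0,1\})$ must be either an \emph{edge} split (with $v = 0$ and $w = 0^{a-1} 1 0^b 1 0^c$, or $w = 0$ and $v = 0^a 1 0^b 1 0^{c-1}$) or a \emph{middle} split $(0^a 1 0^j) * (0^{b-j} 1 0^c)$ for some $0 \le j \le b$. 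Each edge split requires its long factor to have the same gap $b$ but smaller sides, so the inductive hypothesis immediately eliminates both. By Theorem~\ref{thm:one1}, a middle split is valid iff both $\binom{a+j}{a}$ and $\binom{b+c-j}{c}$ are odd; let $N$ denote the number of such valid $j$.

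By the Vandermonde--Chu identity $\sum_{j=0}^{b} \binom{a+j}{a}\binom{b+c-j}{c} = \binom{a+b+c+1}{b}$, reducing modulo $2$ gives $N \equiv \binom{a+b+c+1}{b} \pmod{2}$. If this is even, then $N$ is even and cannot equal $1$. Otherwise Lucas' theorem forces $b$ and $a+c+1$ to be bit-disjoint, and combined with $a+c+1 \le b$ this implies $a+c+1 < 2^s$, where $s$ is the top bit of $b$; in particular $a, c < 2^s$. Write $b = 2^s + b'$ and set $V_r := \{j \ge 0 : r \wedge j = 0\}$, so that $N = N(a,b,c) := \#\{(j,k) \in V_a \times V_c : j + k = b\}$. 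Partitioning by whether $j \ge 2^s$, $k \ge 2^s$, or both summands are below $2^s$ yields the recursion $N(a, b, c) = 2 N(a, b', c) + M(a, b, c)$, where $M(a, b, c)$ counts the valid pairs with both summands below $2^s$.

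The central step is the claim: if $M(a, b, c) \ge 1$, then $N(a, b', c) \ge 1$. Given a pair $(j^*, k^*)$ with $j^*, k^* < 2^s$ and $j^* + k^* = b$, the binary addition must carry a $1$ into bit $s$; let $\ell$ be the start of the final contiguous carry chain, so that $\gamma_\ell = \cdots = \gamma_s = 1$ and $\gamma_{\ell - 1} = 0$. Elementary carry arithmetic then forces $(j^*_{\ell-1}, k^*_{\ell-1}) = (1, 1)$ with $b_{\ell-1} = 0$, and for each $i \in [\ell, s-1]$, $(j^*_i, k^*_i) = (1, 1)$ when $b_i = 1$ and $(j^*_i, k^*_i) \in \{(0, 1), (1, 0)\}$ when $b_i = 0$; in all these cases a $1$ of $j^*$ or $k^*$ at position $i$ forces $a_i = 0$ or $c_i = 0$ respectively. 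Define $(j', k')$ to agree with $(j^*, k^*)$ outside the range $[\ell - 1, s - 1]$, to vanish at bit $\ell - 1$ and at each $i \in [\ell, s-1]$ with $b_i = 0$, and to take $(1, 0)$ or $(0, 1)$ freely at each $i \in [\ell, s-1]$ with $b_i = 1$. A direct bit-by-bit calculation shows $j' + k' = b'$, and the choices place $(j', k')$ in $V_a \times V_c$. Granting the claim, $N = 1$ would require $N(a, b', c) = 0$ and $M(a, b, c) = 1$, contradicting it; hence $N \ne 1$, completing the induction. The main obstacle is isolating the correct final carry chain and verifying that the altered bits retain the disjointness from $a$ and $c$, where most of the technical bookkeeping lies.
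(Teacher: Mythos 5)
Your proof is correct, and while it shares the paper's outer skeleton --- contrapositive plus induction on $n$, dismissal of the splits in which one factor carries both $1$'s (your ``edge splits''), and the translation of the remaining splits into parity conditions on $\binom{a+j}{a}\binom{b+c-j}{c}$ via Theorem~\ref{thm:one1} --- you prove the crucial counting fact (that the number $N$ of valid middle splits is never exactly $1$) by a genuinely different mechanism than the paper's Lemma~\ref{lem: sizeS}. The paper argues constructively: given one valid split, it produces a second one by locally perturbing the binary expansions (transferring a bit $2^j$ from one summand to the other, or performing a longer carry-chain redistribution when both summands share the bit). You instead first evaluate the parity of $N$ globally via the Vandermonde convolution $\sum_{j}\binom{a+j}{a}\binom{b+c-j}{c}=\binom{a+b+c+1}{b}$, which disposes of the case where this coefficient is even at no cost; in the odd case Lucas' theorem forces $a+c+1<2^s$ (with $2^s$ the top bit of $b$), which legitimizes the recursion $N=2N(a,b',c)+M$ obtained by splitting off that top bit, and the carry-chain claim $M\ge 1\Rightarrow N(a,b',c)\ge 1$ then rules out $N=1$. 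I checked the Vandermonde reduction, the bijections behind the recursion (which do require $a,c<2^s$, so the parity step is not merely decorative), and the carry-chain construction of $(j',k')$, including the fact that the support of $j'$ is contained in that of $j^*$ so disjointness from $a$ is inherited; all of this is sound, and the distinctness of the two factors in a middle split is automatic under $b>a+c$. The trade-off: the paper's argument is more directly constructive and self-contained, while yours extracts more information (an exact criterion for when the number of splits is odd, which also illuminates why the bound $i_2-i_1\le n/2$ is tight) and confines the delicate carry bookkeeping to a single descent step.
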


We first require a preparatory lemma.  For nonnegative integers $a$, $b$, and $n$, let $$S_{a,b,n}=\left\{ i : \binom{i}{a}\binom{n-i}{b} \equiv 1 \pmod{2} \right\}.$$ $S_{a,b,n}$ roughly counts the ways in which a string of length $n+2$ with two $1$'s at the indices $a+1$ and $n+2-b$ can be formed as the concatenation of two strings in $\mathcal{U}(\{0,1\})$, each containing one $1$. Thus, we must understand $S_{a,b,n}$ because it yields insight into different representations of words in $\mathcal{W}_2$ with exactly two $1$'s.
\begin{lemma} \label{lem: sizeS}
For all nonnegative integers $a,b, n$ such that $a+b < \frac{n}{2},$ we have $|S_{a,b,n}| \neq 1.$
\end{lemma}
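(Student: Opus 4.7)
The plan is to reduce to a parity calculation and then, in the delicate case when the parity is odd, exhibit at least three explicit solutions via a bit-flipping involution. First, using the standard identity
$$\sum_{i=0}^{n} \binom{i}{a}\binom{n-i}{b} = \binom{n+1}{a+b+1},$$
I obtain $|S_{a,b,n}| \equiv \binom{n+1}{a+b+1} \pmod{2}$. If this coefficient is even, then $|S_{a,b,n}|$ is even and cannot equal $1$, finishing this case.

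Now suppose $\binom{n+1}{a+b+1}$ is odd. By Kummer's theorem, $c := a+b+1$ and $k := n-a-b$ have disjoint binary supports. Combined with the hypothesis $k > a+b$ (which gives $k \geq c$), a short argument shows $c < 2^h$, where $h$ is the position of the leading bit of $k$; in particular $a, b < 2^h$. I would then reparametrize elements of $S_{a,b,n}$ via $\alpha = i - a$ and $\beta = n - i - b$, so that $|S_{a,b,n}|$ counts pairs $(\alpha, \beta)$ of nonnegative integers with $\alpha + \beta = k$, where $\alpha$ avoids the bits of $a$ and $\beta$ avoids the bits of $b$.

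Classify such pairs by the values $(\alpha_h, \beta_h) \in \{0,1\}^2$ at bit $h$, and let $N_{ij}$ denote the count for each type. Since $\alpha, \beta \leq k < 2^{h+1}$, the case $(1,1)$ would force $\alpha + \beta \geq 2^{h+1} > k$, so $N_{11} = 0$. Because $a, b < 2^h$, the map $(\alpha, \beta) \mapsto (\alpha + 2^h, \beta - 2^h)$ preserves both disjointness conditions and the total $k$, yielding an involution that swaps $(0,1)$ pairs with $(1,0)$ pairs; hence $N_{01} = N_{10}$ and $|S_{a,b,n}| = N_{00} + 2N_{01}$. Odd parity forces $N_{00} \geq 1$. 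For $N_{01}$, substituting $\beta' = \beta - 2^h$ produces a bijection $N_{01} = |S_{a,b,a+b+r}|$ with $r := k - 2^h$; reapplying the parity identity gives $N_{01} \equiv \binom{c+r}{c} \pmod 2$. Since $r$'s bits are a subset of $k$'s bits and $c$ is bit-disjoint from $k$, $c$ is also bit-disjoint from $r$, so this coefficient is odd by Kummer and $N_{01} \geq 1$. Therefore $|S_{a,b,n}| \geq 1 + 2 = 3$. The main subtle point is verifying that the bit-disjointness hypothesis propagates cleanly to the sub-problem with parameters $(a, b, n-2^h)$ — this is exactly what allows the parity argument to certify a second nontrivial solution on the second pass.
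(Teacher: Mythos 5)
Your argument is correct, and it takes a genuinely different route from the paper's. The paper's proof is constructive: starting from a given $i \in S_{a,b,n}$, it locates a suitable binary digit (the largest $j$ with $a_j=b_j=0$ but $i_j=1$ or $(n-i)_j=1$) and, via a case analysis with Lucas' theorem, surgically relocates bits to exhibit a second index $i'\in S_{a,b,n}$. You instead determine the parity of $|S_{a,b,n}|$ globally from the convolution identity $\sum_i \binom{i}{a}\binom{n-i}{b}=\binom{n+1}{a+b+1}$, which kills the even case outright, and in the odd case you split the reparametrized count of pairs $(\alpha,\beta)$ with $\alpha+\beta=k=n-a-b$ (and $\alpha$, $\beta$ bit-disjoint from $a$, $b$ respectively) according to the bit at the leading position $h$ of $k$. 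I checked the delicate points: the identity is the standard upper-index Vandermonde convolution; the hypothesis $a+b<n/2$ enters exactly where it must, giving $k\ge c=a+b+1$ and hence, with the Kummer bit-disjointness of $c$ and $k$, the bound $c<2^h$ that makes bit $h$ "free" for the involution; $N_{11}=0$ and $N_{01}=N_{10}$ follow as you say; and since the bits of $r=k-2^h$ are a subset of those of $k$, the disjointness of $c$ from $r$ does propagate, so $N_{01}\equiv\binom{c+r}{c}\equiv 1\pmod 2$ and $|S_{a,b,n}|=N_{00}+2N_{01}\ge 3$. Your approach buys a less case-heavy argument and strictly more information (the exact parity of $|S_{a,b,n}|$, and a lower bound of $3$ whenever that parity is odd), whereas the paper's approach has the virtue of explicitly producing the second representation, which is the form in which the lemma is consumed in the proof of Theorem \ref{thm:dist1}; either version suffices for that application.
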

\begin{proof}
Assume that $|S_{a,b,n}| \ge 1$, so that there is some $i \in S_{a,b,n}$.  We have

$$\binom{i}{a}\binom{n-i}{b}\equiv 1 \pmod{2}.$$

Given $i \in S_{a,b,n}$, we aim to exhibit a second element $i' \in S_{a,b,n}$.

Consider the binary representations $a=a_k2^k+a_{k-1}2^{k-1}+\cdots+a_12+a_0=\overline{a_ka_{k-1}\cdots a_1a_0}$, $b=\overline{b_kb_{k-1}\cdots b_1b_0}$, $i=\overline{i_ki_{k-1}\cdots i_1i_0}$, and $n-i=\overline{(n-i)_k(n-i)_{k-1}\cdots (n-i)_1(n-i)_0}$, where $k$ is sufficiently large so that $a$, $b$, $i$, and $n-i$ all have leading $0$'s. Let $j$ with $0 \le j \le k$ be the maximum value such that $a_j=b_j=0$ but at least one of $i_j=1$ or $(n-i)_j=1$; such a $j$ exists since $a+b < \frac{n}{2}$. If exactly one of $i_j$ and $(n-i)_j$ equals $1$, then assume without loss of generality that $i_j=1$ and $(n-i)_j=0$. This loses no generality because we can swap $a$ with $b$ and $i$ with $n-i$. By Lucas' Theorem \cite{lucas theorem}, the choice $i'=i-2^j$ and $n-i'=(n-i)+2^j$ satisfies
$$\binom{i'}{a}\binom{n-i'}{b} \equiv 1 \pmod{2}.$$

On the other hand, if $i_j=(n-i)_j=1$, then let $m$ with $k \ge m >j$ be the smallest value such that $i_m=0$ or $(n-i)_m=0$. Without loss of generality, assume that $i_m=0$. 

For $m > x > j$, at least one of $a_{x}=0$ or $b_{x}=0$; otherwise, the maximum possible value of $n$ would be $2(a-2^x)+2(b-2^x)+2(1+2+\cdots+2^{x-1}+2^x)$, since $j$ is the largest index where $a_j=b_j=0$ but $i_j=1$ or $(n-i)_j=1$. But then $\frac{n}{2} \le a-2^x+b+1+2+\cdots+2^{x-1} < a+b$. Thus, if $a_x=1$, then $b_x=0$. Furthermore, $m$ is defined such that $i_x=(n-i)_x=1$ for all $m>x>j$.

By Lucas' Theorem \cite{lucas theorem}, 
$$i'=i-2^j-\left (\sum_{j<x<m, a_x=0}2^x\right )+2^m$$ with 
$$n-i'=(n-i)-2^j-\left( \sum_{j<x<m, a_x=1}2^x\right )$$ therefore also satisfies
$$\binom{i'}{a}\binom{n-i'}{b} \equiv 1 \pmod{2},$$
which implies that $|S_{a,b,n}| \neq 1$.
\end{proof}

Now we are ready to prove Theorem \ref{thm:dist1}.
\begin{proof}[Proof of Theorem \ref{thm:dist1}]
For any word $u$ of length $n$ with exactly two $1$'s, at indices $i_1$ and $i_2$, that satisfies $i_2-i_1 > \frac{n}{2}$, we wish to prove $u$ is not in $\mathcal{U}(\{0, 1\})$.

In particular, we wish to prove that $u$ can be constructed as the concatenation of two smaller words in $\mathcal{U}(\{0, 1\})$ in either zero or at least two ways.  We consider the possible representations of $u$ as a concatenation of two previous elements, $u_1$ and $u_2$, of $\mathcal{U}(\{0, 1\})$. First, we rule out the case where one of $u_1$ and $u_2$ contains both $1$'s, by induction on $n$. Thus, we restrict our attention to expressions of $u=u_1*u_2$ as the concatenation of two smaller words $u_1$ and $u_2$ in $\mathcal{U}(\{0, 1\})$, each with exactly one $1$. 

We note that the index of the $1$ in $u_1$ is $i_1$, while the index of the $1$, from right to left, in $u_2$ is $n-i_2+1$. By Theorem \ref{thm:one1}, we therefore wish to prove that if $k$ is the length of $u_1$ (so that $u_2$ has length $n-k$), then there exists either $0$ or at least $2$ possible values of $k$ such that 
$$\binom{k-1}{i_1-1} \equiv 1 \pmod{2} \quad \text{and} \quad \binom{(n-2)-(k-1)}{n-i_2} \equiv 1 \pmod{2}.$$ Note that since $i_2-i_1 > \frac{n}{2}$, $(i_1-1)+(n-i_2) < \frac{n-2}{2}$. We therefore see that the number of ways to express $u$ as the concatenation of two words in $\mathcal{U}(\{0,1\})$, each consisting of one $1$, is $|S_{i_1-1, n-i_2, n-2}|$. By Lemma \ref{lem: sizeS}, however, $|S_{i_1-1, n-i_2, n-2}|\neq 1$; hence, all such $u$ with $i_2-i_1 > \frac{n}{2}$ are excluded from $\mathcal{U}(\{0,1\})$.
\end{proof}
We note that $i_2-i_1 \le \frac{n}{2}$ is a tight bound and that Lemma \ref{lem: sizeS} no longer holds when $a+b = \frac{n}{2}$. In particular, $001001$, $000001000001$, $000010000010$, and $00001000000100$ are all examples of words in $\mathcal{U}(\{0,1\})$ that satisfy $i_2-i_1 = \frac{n}{2}$.

\subsection{The column phenomenon} \label{free-col}
Kravitz and Steinerberger observed a column phenomenon in Ulam sets of certain commutative settings \cite{noah}. We extend this notion to the noncommutative setting of free groups and prove a similar result for $\mathcal{U}(\{0,1\})$.

They proved that in certain commutative Ulam sets in $\mathbb{Z}^2$, all the columns (each obtained by fixing a $x$-coordinate) are eventually periodic. This does not readily apply to $\mathcal{U}(\{0,1\})$; instead, we extend this notion of columns by fixing a suitable infinitely long word and considering all its prefixes.

More precisely, for a word $t \in \mathcal{W}_2$ of length $m$, define $T=t*t*t*\cdots$ to be the infinitely long word consisting of the concatenation of infinitely many copies of $t$.

For $0\le i <m$, denote by $T_{i,j}$ the subword of $T$ consisting of the $j$ consecutive letters in $T$ starting after the $i$-th index. For simplicity, let $T_k$ be another expression for $T_{0,k}$.

\begin{theorem} \label{thm: free-periodicity}
If for any $0 \le i <m$, the set of values $k$ for which $T_{i,k}$ is in $\mathcal{U}(\{0,1\})$ is eventually periodic, then for all words $u \in \mathcal{W}_2$, the set of values $k$ for which $u*T_k$ is in $\mathcal{U}(\{0,1\})$ is also eventually periodic.
\end{theorem}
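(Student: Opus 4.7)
The plan is to induct on $|u|$. The base case $|u|=0$ reduces to $u*T_k=T_{0,k}$, which is handled by the hypothesis with $i=0$. For the inductive step, fix $u$ of length $\ell\ge 1$ and assume the conclusion for every proper suffix of $u$. Let $B(k):=\mathbf{1}[u*T_k\in \mathcal{U}(\{0,1\})]$. I classify each representation $u*T_k=v*w$ by the position of the split. If the split lies inside $u$ (Type A), then $v$ is a fixed prefix of $u$ and $w$ is of the form $u'*T_k$ for some proper suffix $u'$ of $u$; by the inductive hypothesis applied to each such $u'$, the total number $A(k)$ of valid Type A representations is eventually periodic in $k$. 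If the split lies inside the $T_k$ portion (Type B), at position $\ell+q$ for some $1\le q\le k-1$, then $v=u*T_q$ and $w=T_{q\bmod m,\,k-q}$. Thus the Type B count is the self-referential sum
$$C(k)=\sum_{q=1}^{k-1}B(q)\cdot P_{q\bmod m}(k-q)\cdot \mathbf{1}[v\ne w],$$
where $P_i(j):=\mathbf{1}[T_{i,j}\in \mathcal{U}(\{0,1\})]$ is eventually periodic in $j$ by hypothesis. Since $B(k)=\mathbf{1}[A(k)+C(k)=1]$ and the distinctness indicator fails for at most one $q$ per $k$ (a bounded correction), it suffices to show that this recursion produces an eventually periodic $B$.

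The main obstacle is that $C(k)$ depends on all of $B(1),\ldots,B(k-1)$; to control it, I would convert it into a finite-state recursion. Let $L$ be a common eventual period of $A$ and of every $P_i$, let $Q$ be a threshold past which all of these are periodic, and set $M^*=\mathrm{lcm}(m,L)$. I split the sum at $q=k-Q$: the \emph{recent} part $\sum_{r=1}^{Q-1}B(k-r)P_{(k-r)\bmod m}(r)$ depends only on the last $Q-1$ values of $B$ and on $k\bmod M^*$. For the \emph{old} part ($q\le k-Q$), the factor $P_{q\bmod m}(k-q)$ lies in the periodic regime; grouping $q$ by its residue $c:=q\bmod M^*$, the old part equals $\sum_{c=0}^{M^*-1} h_{c,k}\,N_c(k-Q)$, where $h_{c,k}$ depends only on $k\bmod M^*$ and $N_c(r):=\#\{q\le r:\,q\equiv c\pmod{M^*},\ B(q)=1\}$.

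Now partition the residues into $G:=\{c : N_c(r)\to\infty\}$ and its complement; on the complement, $N_c$ is eventually a constant $n_c$, so its contribution to $C(k)$ is an eventually periodic function of $k\bmod M^*$. For $c\in G$, once $N_c(k-Q)\ge 2$ the product $h_{c,k}\,N_c(k-Q)$ is either $0$ or at least $2$; in the latter case $C(k)\ge 2$, forcing $B(k)=0$. Hence for $k$ sufficiently large there is a fixed set of \emph{forbidden} residues modulo $M^*$ on which $B$ is eventually zero, while on the \emph{allowed} residues $C(k)$ reduces to an eventually periodic function of $k\bmod M^*$ plus the recent part. Altogether, for $k$ large, $B(k)$ is deterministically generated by the finite state $(B(k-1),\ldots,B(k-Q+1),\,k\bmod M^*)$; since the state space is finite, the state trajectory (and hence $B$ itself) is eventually periodic, completing the induction.
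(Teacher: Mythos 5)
Your proof follows essentially the same route as the paper's: induct on $|u|$, dispose of splits inside $u$ via the inductive hypothesis, show that splits whose tail $T_{i,k-q}$ lies in the periodic regime either contribute an eventually periodic amount or (once a residue class accumulates two hits) force at least two representations and hence exclusion, and close with a finite-state recursion driven by the finitely many short-tail splits. Two small points to patch: your Type~B sum should start at $q=0$, since $u*T_k=u*T_{0,k}$ is a legitimate split (it is absorbed harmlessly into the ``old'' part as the term $B(0)P_0(k)$); and dismissing the distinctness indicator as a ``bounded correction'' is not quite enough, because boundedness does not give periodicity --- the right observation is that $v=w$ can occur for infinitely many $k$ only if $u$ coincides with some shift $T_{i,\ell}$ of $T$, in which case the correction term equals $P_i(\ell+(k-\ell)/2)$ and is itself eventually periodic by hypothesis, so it folds into the finite-state data (the paper's own write-up glosses over this same issue).
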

\begin{proof}
We proceed by induction on the length, $n$, of word $u$. For the base case, $n=0$, we already assumed that the values $k$ for which $T_k$ is in $\mathcal{U}(\{0,1\})$ are eventually periodic.

For the inductive step, assume that the theorem holds for all words of length less than $n$. Then for a word $u$ of length $n$, there are two potential ways to represent $u*T_k$ as the product of two smaller terms in $\mathcal{U}(\{0,1\})$:
\begin{enumerate}
     \item $u*T_k=u_1*(u_2*T_k)$, where $u_1*u_2=u$ and $u_1$ and $u_2$ are words of positive length.
     \item $u*T_k=(u*T_{k-k'})*T_{i,k'}$, where $i \equiv k-k' \pmod{m}$, $0 \le i < m$, and we consider $u*T_0$ to be equal to $u$.
\end{enumerate}

Let's first disregard the second case. Note that there are $n-1$ ways to represent $u$ as $u_1*u_2$, as necessary for the first case. By our inductive hypothesis, the set of values $k$ such that $u_2*T_k$ is in $\mathcal{U}(\{0,1\})$ is eventually periodic for all $n-1$ such $u_2$.  Thus, if $P$ is the least common multiple of their periods, there exists some large $K$ such that, for any $u_2$ and for $k>K$, $u_2*T_k\in \mathcal{U}(\{0,1\})$ if and only if $u_2*T_{k+P} \in \mathcal{U}(\{0,1\})$. Note that the number of representations of $u*T_k$ as $u_1*(u_2*T_k)$ depends solely on whether each $u_2*T_k$ is in $\mathcal{U}(\{0,1\})$, since each $u_1$ is fixed. But for $k>K$, $u_2*T_k$ is in $\mathcal{U}(\{0,1\})$ if and only if $u_2*T_{k+P}$ is included. Thus, if we define
\[ 
b_{u,k}= \begin{cases} 
      0 & \text{if there are no such representations of $u*T_k$}\\
      1 & \text{if there is exactly one such representation of $u*T_k$} \\
      2_+ & \text{otherwise}
      \end{cases},
\]
then $(b_{u,k})$ is eventually periodic in $k$ with period $P$.

Now, we account for the second case. We will show that the periodicity still holds (though the period can change) when also considering representations of the form $u*T_k=(u*T_{k-k'})*T_{0, k'}$ (when $i=0$ and so $k \equiv k' \equiv 0 \pmod{m}$). We assumed that the values $k'$ such that $T_{k'}$ is in $\mathcal{U}(\{0,1\})$ are eventually periodic. We can therefore split up the values $k'$ for which $T_{k'}$ is in $\mathcal{U}(\{0,1\})$ into a finite non-periodic transient phase and a periodic phase. 

Similar to $(b_{u,k})$, let $(c_{u,k})$ denote the number of representations of $u*T_k$ where we now also consider possible representations $u*T_k=(u*T_{k-k'})*T_{0, k'}$ where $k'$ is in the periodic phase. For simplicity, we similarly let $c_{u,k}$ to be equal to $2_+$ if it has at least $2$ such representations.

Let $P_0$ be the period of the periodic phase for $T_{0,k'}$, and let $C$ be the set of all congruence classes modulo $PP_0$ that contain infinitely many values $k'$ with $T_{0,k'} \in \mathcal{U}(\{0,1\})$. For a fixed equivalence class, $R$, modulo $PP_0$, consider all equivalence classes $R-S$, where $S \in C$. Then any $k-k' \equiv 0 \pmod{m}$ in any of the equivalence classes $R-S$ such that $u*T_{k-k'} \in \mathcal{U}(\{0,1\})$ yields a representation of $u*T_k$ for sufficiently large $k \in R$. Thus, if there are two or more such elements, $c_{u,k}=2_+$ for sufficiently large $k \in R$. If there is one such element, then $c_{u,k}=b_{u,k}+1$, and if there is no such element then $c_{u,k}=b_{u,k}$, for sufficiently large $k \in R$. Thus, the sequence $(c_{u,k})$ is still eventually periodic, though with period $PP_0$.

Similar to $(c_{u,k})$, let $(c'_{u,k})$ denote the number of representations of $u*T_k$ where we now also account for all of the representations $u*T_k=(u*T_{k-k'})*T_{i,k'}$ where $k'$ is in the periodic phase of $T_{i,x}$. The same argument above of the periodic case for $i=0$ can similarly be repeatedly applied and extended to the other $m-1$ periodic phases, corresponding to $i=1,\ldots, m-1$, to show that $(c'_{u,k})$ is also periodic. 

Now, all that's left is for us to account for the $m$ finite transient phases. Whether $u*T_k$ is now included depends solely on $c'_{u,k}$, and for each word $T_{i,x} \in \mathcal{U}(\{0,1\})$ from one of the transient phases with $i+x\equiv k \pmod{m}$, whether $u*T_{k-x}$ is in $\mathcal{U}(\{0,1\})$. This is a recurrence relation describing whether $u*T_k$ is in $\mathcal{U}(\{0,1\})$ in terms of which of the $u*T_{k-x}$ are included. However, there are only finitely many such elements $T_{i,x}$, and $c'_{u,k}$ is periodic for large enough $k$, so this recurrence relation has finitely many states. Hence, it must also be eventually periodic. 

We therefore conclude, after fully considering all possibilities, that the values $k$ for which $u*T_k$ is in $\mathcal{U}(\{0,1\})$ are eventually periodic.
\end{proof}

For the special case when $t=0$ and $T=000\cdots$ is simply an infinitely long word of all $0$'s, Theorem \ref{thm: free-periodicity} implies that adding arbitrarily number of $0$'s to the end of a word $u$ will always create new words that are eventually periodically included in $\mathcal{U}(\{0,1\})$. Indeed, we note that Theorems \ref{thm:consec1} and \ref{thm:101} are special cases of this property for $u=\underbrace{0 \cdots 0}_{n}11$ and $\underbrace{0 \cdots 0}_{n}101$, respectively (where $n$ is some nonnegative integer). 

We also see, by Lucas' Theorem \cite{lucas theorem}, that Theorem \ref{thm:one1} shows the existence of this special case of the column phenomenon for words with exactly one $1$ since the binomial coefficients are eventually periodic modulo $2$.

\subsection{Density}

We now consider the density of the Ulam set $\mathcal{U}(\{0, 1\})$ in the following sense.  Let $\mathcal{W}_{2,n}$ denote the set of all binary words of length $n$, and let $V_n=\mathcal{U}(\{0, 1\}) \cap \mathcal{W}_{2,n}$.  We are interested in the quantity $|V_n|/|\mathcal{W}_{2,n}|$, which represents the density of the Ulam set among all binary words of length $n$.

Here are the plotted values of $|V_n|/|\mathcal{W}_{2,n}|$ for $n < 25$, obtained from a computer-generated list of all $6900344$ words in $\mathcal{U}(\{0, 1\})$ of length less than $25$.

\begin{figure}[htp!]
    \centering
    \includegraphics[width=8cm]{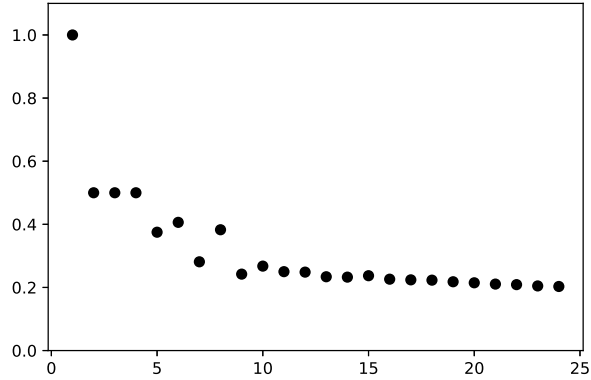}
    \caption{The density of words of size $n$ in $\mathcal{U}(\{0,1\})$.}
\end{figure}

We notice that densities $|V_n|/|\mathcal{W}_{2,n}|$ seem to converge at some real value $r$. In particular, we compute $$|V_{24}|/|\mathcal{W}_{2,24}|=\frac{3406884}{2^{24}}=\frac{851721}{4194304}\approx 0.20306611.$$
\begin{conjecture} \label{conj:density}
There exists some $0<r<1$ such that the density of the Ulam set $\mathcal{U}(\{0, 1\})$ is asymptotically equal to $r$. More formally, we have
$$\lim_{n \to \infty} \frac{|V_n|}{|\mathcal{W}_{2,n}|}=\lim_{n \to \infty}\frac{|V_n|}{2^n}=r.$$
\end{conjecture}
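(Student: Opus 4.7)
The plan is to reformulate the density question probabilistically and then attack it via moment and correlation analysis of representation counts. For a uniformly random binary word $w \in \{0,1\}^n$, define
$$N(w) \;=\; \#\bigl\{(u,v)\,:\,w = u*v,\ u,v \in \mathcal{U}(\{0,1\}),\ u \ne v,\ |u|,|v| \ge 1\bigr\}.$$
For $n \ge 2$, a word $w$ of length $n$ lies in $\mathcal{U}(\{0,1\})$ if and only if $N(w) = 1$, so that $f_n := |V_n|/2^n = \mathbb{P}[N(w) = 1]$ and the conjecture becomes $\lim_n \mathbb{P}[N(w) = 1] = r$ for some $0 < r < 1$.

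The first step would be a moment calculation. Since for each split point $k$ the length-$k$ prefix and length-$(n-k)$ suffix of a uniform $w$ are independent uniform words, linearity of expectation yields $\mathbb{E}[N(w)] = \sum_{k=1}^{n-1} f_k f_{n-k} - \delta_n$, where $\delta_n = O(1)$ corrects for the $u = v$ case. Provisionally assuming $f_n \to r > 0$ would force $\mathbb{E}[N] \sim (n-1)r^2$, so a naive Poisson heuristic would predict $\mathbb{P}[N = 1] \to 0$ and contradict $r > 0$. Either $r = 0$ (the numerical evidence through $n=24$ is not yet decisive), or the indicators $L_k R_k$ (of prefix and suffix membership at split $k$) exhibit strong positive correlations that make $N(w)$ highly non-Poisson, with mass clustering at $N = 0$ and at large $N$. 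Quantifying this clustering is the crux of the problem.

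My main strategy would be to upgrade the column-phenomenon machinery of Theorem \ref{thm: free-periodicity}. That proof shows that, with $u$ fixed, membership of $u*T_k$ in $\mathcal{U}(\{0,1\})$ obeys a finite-state recurrence in $k$. I would try to interpret this recurrence as a transfer operator on a suitable symbolic space and then construct a shift-invariant ergodic measure on bi-infinite binary sequences whose length-$n$ marginals are close to uniform and under which membership in $\mathcal{U}(\{0,1\})$ is a measurable event. The Birkhoff ergodic theorem would then deliver the limit $r$, with the constraint $r < 1$ following from finding a positive-measure set of forbidden local patterns (using, for instance, the parity obstructions of Theorems \ref{thm:consec1} and \ref{thm:101} extended to longer contexts). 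A more modest intermediate target, which would already imply convergence without identifying $r$, is to prove $|f_{n+1} - f_n| \to 0$ or, stronger, $\sum_n |f_{n+1} - f_n| < \infty$.

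The main obstacle is the deeply self-referential nature of $\mathcal{U}(\{0,1\})$: membership at length $n$ depends on membership at every shorter length, so no fixed finite window of $w$ can decide it, and the uniqueness condition in the definition creates subtle cancellations that resist local analysis. This is the same structural difficulty keeping the analogous density questions for the classical Ulam sequence on $\mathbb{Z}$ open, and I expect a full proof of Conjecture \ref{conj:density} to require genuinely new tools beyond those developed in this paper; even settling whether $r > 0$ or $r = 0$ would be a significant advance.
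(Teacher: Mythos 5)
You should first be clear that there is nothing in the paper to compare your attempt against: Conjecture \ref{conj:density} is stated as an open problem, supported only by the numerical computation of $|V_n|/2^n$ up to $n=24$, and the paper offers no proof of it. Your submission is, by your own closing admission, a research program rather than a proof, so the question is only whether the program is sound and where its gaps lie.

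The first-moment computation is correct (the prefix and suffix at each split of a uniform word are independent and uniform, so $\mathbb{E}[N]=\sum_k f_kf_{n-k}+o(1)$), and the conclusion you draw from it --- that $\mathbb{E}[N]\sim r^2n$ coexisting with $\mathbb{P}[N=1]\to r>0$ forces $N$ to be wildly non-Poisson --- is a genuine structural observation. Everything after that, however, is aspirational, and the central step would fail as stated. Theorem \ref{thm: free-periodicity} is a \emph{conditional} statement about one-parameter families $u*T_k$ in which only an eventually periodic tail varies while the prefix $u$ is held fixed; it gives no finite-state description of membership for a typical word in which all $n$ letters vary, and no such description is known (membership at length $n$ depends on the entire set of shorter members of $\mathcal{U}(\{0,1\})$, which is exactly the self-referential obstruction you name). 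Even granting the existence of your shift-invariant measure, Birkhoff's theorem controls ergodic averages of a fixed observable along orbits of a single point, not the sequence $\mathbb{P}[\text{a uniform length-}n\text{ word lies in }\mathcal{U}(\{0,1\})]$ as $n\to\infty$; these are different limits, and relating them would require an additional equidistribution argument you do not supply. Finally, your route to $r<1$ via the parity obstructions of Theorems \ref{thm:consec1} and \ref{thm:101} does not work: those results exclude certain words with exactly two $1$'s, an exponentially small fraction of $\mathcal{W}_{2,n}$, so they cannot produce a positive-density family of forbidden words. The conjecture remains open, and the honest assessment in your final paragraph is the accurate one.
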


Note that Conjecture \ref{conj:density} is analogous to the problem of finding the density of the classical Ulam sequence, which empirical calculations suggest is approximately $0.074$ \cite{trillion}.

\subsection{$\mathcal{V}$-sets in free groups} While $\mathcal{U}(\{0,1\})$ contains periodic and structured properties, a lot is still unknown about its behavior. However, $\mathcal{V}(\{0,1\})$, the variant of the Ulam set $\mathcal{U}(\{0,1\})$ that allows words to be concatenated with themselves, can be fully characterized.

\begin{theorem} \label{thm:freegroup-Vset}
A word $u$ of length $n$ is in $\mathcal{V}(\{0,1\})$ if and only if $n$ is a power of $2$.
\end{theorem}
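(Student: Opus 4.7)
The plan is to proceed by strong induction on the length $n$, simultaneously establishing the stronger claim that membership of a word in $\mathcal{V}(\{0,1\})$ depends only on its length. The base case $n = 1$ is immediate, since the only words of length $1$ are the generators $0$ and $1$, and $1 = 2^0$.

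For the inductive step, I would fix an arbitrary word $u \in \mathcal{W}_2$ of length $n \ge 2$ and enumerate its candidate representations. Any representation $u = v * w$ with $v, w \in \mathcal{W}_2$ is determined by $k = |v| \in \{1, \ldots, n-1\}$, with $v$ the length-$k$ prefix and $w$ the length-$(n-k)$ suffix of $u$. By the inductive hypothesis, $v$ lies in $\mathcal{V}(\{0,1\})$ iff $k$ is a power of $2$, and similarly $w$ lies in the set iff $n-k$ is a power of $2$. Since $\mathcal{V}$-sets permit $v = w$, no decomposition is discarded on that basis. Hence the number of valid representations of $u$ depends only on $n$, not on the specific word, and equals
\[
N(n) := \#\{k : 1 \le k \le n-1,\ k \text{ and } n-k \text{ are both powers of } 2\}.
\]

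It then remains to show $N(n) = 1$ iff $n$ is a power of $2$, which is a short arithmetic computation. Writing $k = 2^a$ and $n-k = 2^b$ with $a, b \ge 0$: if $n = 2^m$, then $2^a + 2^b = 2^m$ forces $a = b = m-1$ (otherwise the binary expansion of the left side has two $1$-bits), so $k = n/2$ is the unique solution. If $n$ has binary Hamming weight at least $3$, then $n$ admits no representation as a sum of two powers of $2$ and $N(n) = 0$. If $n$ has binary Hamming weight exactly $2$, say $n = 2^a + 2^b$ with $a < b$, then both $k = 2^a$ and $k = 2^b$ are valid, giving $N(n) = 2$. In all cases, $N(n) = 1$ if and only if $n$ is a power of $2$.

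The proof is essentially bookkeeping once the inductive frame is in place; the only subtlety worth flagging is the $\mathcal{V}$-set convention that allows $v = w$ in a representation, which is precisely what makes the powers-of-$2$ case succeed, since for $n = 2^m$ the unique valid representation is $u = v * v$ with $v$ equal to both halves of $u$.
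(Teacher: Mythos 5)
Your proposal is correct and follows essentially the same route as the paper: induction on length showing membership depends only on $n$, with the count of representations reduced to the number of ways to split $n$ as an ordered sum of two powers of $2$, analyzed by binary Hamming weight. The paper's proof is organized around the same three cases (power of $2$, weight $\ge 3$, weight exactly $2$), so there is nothing substantive to add.
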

\begin{proof}
We induct on $n$. For the base case, $n=1$, we see that both $0$ and $1$ are included in $\mathcal{V}(\{0,1\})$.

For the inductive step, assume the theorem holds for all words of length less than $n$. Then for a word $u$ of length $n$, denote by $u_{i,j}$ the subword of $u$ consisting of the $j$ consecutive letters in $u$ starting after the $i$-th index. If $n=2^{k+1}$ is a power of $2$, then by the inductive hypothesis, all words of length $2^k$ are in $\mathcal{V}(\{0,1\})$, but no words of length strictly between $2^k$ and $2^{k+1}$ are included. Thus, $u$ has a unique representation as the concatenation of smaller words in $\mathcal{V}(\{0,1\})$, $$u=u_{0,2^k}*u_{2^k, 2^k},$$ consisting of concatenating the first half of $u$ with the second half. Therefore, $u$ is included in the $\mathcal{V}$-set.

If $n$ is not a power of $2$ and cannot be written as the sum of two powers of $2$, then $u$ cannot be represented as the concatenation of smaller words in $\mathcal{V}(\{0,1\})$ by our inductive hypothesis. Otherwise, if $n=2^i+2^j$ is not a power of $2$, and so $i\neq j$, but can be written as the sum of two powers of $2$, then by the inductive hypothesis, $$u=u_{0,2^i}*u_{2^i,2^j}=u_{0,2^j}*u_{2^j,2^i}$$
are two different representations of $u$ as the concatenation of smaller terms in $\mathcal{V}(\{0,1\})$. Thus, $u$ is excluded in this case.
\end{proof}

\section{Ulam sets in $\mathbb{Z} \times (\mathbb{Z}/n\mathbb{Z})$}
\label{ZxZn}

We now focus solely on Ulam sets in commutative settings. Suppose we have an abelian group $G$ and a finite initial set $S\subset G$. In \cite{noah}, Kravitz and Steinerberger proved that the choice of the notion of size that determines the order in which the elements are added to $\mathcal{U}(S)$ does not actually affect the set itself. Thus, only the initial set $S$ is needed to construct $\mathcal{U}(S)$. Moreover, it is easy to see that there exists a suitable size function for an initial set $S=\{v_1,\ldots,v_k\}$ (in the sense of Theorem 2 from \cite{noah}) if and only if the equation $$a_1v_1 + \ldots + a_kv_k=0$$ has no solution $(a_1, \ldots, a_k)$ in nonnegative integers where not all $a_1, \ldots, a_k$ are zero.

If we allow negative integers, however, then this equation can have nontrivial solutions in general. Kravitz and Steinerberger also proved in \cite{noah} that the structure of the Ulam set depends only on the solutions to this equation, called the \emph{characteristic equation} of the Ulam set. More precisely, we define two Ulam sets $\mathcal{U}_1=\mathcal{U}(\{v_1,\ldots,v_k\})$ and $\mathcal{U}_2=\mathcal{U}(\{w_1,\ldots,w_k\})$ to be \emph{structurally equivalent} if, for all $(a_1,\ldots,a_k)\in \mathbb{Z}^k$,
$$a_1v_1+\cdots+a_kv_k\in \mathcal{U}_1 \Longleftrightarrow a_1w_1+\cdots+a_kw_k\in \mathcal{U}_2$$ (note that this is related to the idea of Freiman homomorphisms). They proved that two Ulam sets with the same set of solutions of their characteristic equation are structurally equivalent. Note that if we let $L$ be the set of solutions to the characteristic equation, then $L$ must form a subgroup of $\mathbb{Z}^k$. We will therefore call $L$ the \emph{associated lattice} of the Ulam set. Because of this structural equivalence, we will often refer to \emph{the} Ulam set with associated lattice $L$ since all Ulam sets with associated lattice $L$ are structurally equivalent.

The condition on the generators implies that $L \cap \mathbb{Z}_{\ge 0}^k=\{0\}$. Furthermore, we cannot have a vector of the form $e_i-e_j$ with $i\neq j$ in $L$ (where $e_k$ is the canonical basis vector with a $1$ in the $k$-th position and $0$'s everywhere else), since it would imply that $v_i=v_j$. Conversely, for any lattice satisfying those two conditions, we can find an Ulam set having $L$ as its associated lattice: take $G$ to be the quotient group $\mathbb{Z}^k/L$ and $v_1,\ldots,v_k$ to be the images of the canonical basis vectors of $\mathbb{Z}^k$ in this quotient. Thus, there is a one-to-one correspondence (up to structural equivalence) between Ulam sets over commutative groups with $k$ generators and lattices in $\mathbb{Z}^k$ not intersecting $\mathbb{Z}_{\ge 0}^k\setminus \{0\}$ that contains no vector $e_i-e_j$ for $i\ne j$. A particular Ulam set in a particular setting with given associated lattice $L$ is called an \emph{embedding}. 

The case of two generators is particularly interesting: $L$ clearly cannot be two-dimensional, so it must either be one-dimensional or be the zero lattice (which corresponds to the simple case of two linearly independent generators, treated in \cite{noah}). Thus, all the nontrivial cases correspond to one-dimensional lattices in $\mathbb{Z}^2$, which are characterized by a generator $(-a,b)$ where $a,b> 0$ and $a, b$ are not both $1$. In particular, the classical Ulam sequence in $\mathbb{Z}$ with initial set $\{1,2\}$ corresponds to the lattice generated by $(-2,1)$.

If $a$ and $b$ are relatively prime, we can find an embedding in $\mathbb{Z}$ by taking $v_1=b$ and $v_2=a$. The interesting new cases appear when $d=\gcd(a,b)>1$; in this case, there is clearly no embedding in $\mathbb{Z}^m$, since if we have $v_1,v_2 \in \mathbb{Z}^m$ such that $-av_1+bv_2=0$, then we also have $-\frac{a}{d}v_1+\frac{b}{d}v_2=0$. It can, however, be embedded in $\mathbb{Z} \times (\mathbb{Z}/d\mathbb{Z})$. Indeed, let $a'=a/d$, $b'=b/d$ and choose $u,v$ such that $ua'-bv'=1$. Then we can take $v_1=(b',u)$ and $v_2=(a',v)$, and it is straightforward to check that the corresponding Ulam set has associated lattice generated by $(-a,b)$.

Because of this universality, we study, in this section, Ulam sets in $\mathbb{Z} \times (\mathbb{Z}/n\mathbb{Z})$. Note that exploring this setting also originates naturally from complex multiplication. Indeed, as noted in \cite{noah}, Ulam sets arising from multiplication in $\mathbb{R}$ are equivalent to those arising from addition via a logarithm. But in $\mathbb{C}$, the logarithm maps us to $\mathbb{R}\times \mathbb{T}$ (under addition), which suggests the investigation of the discrete analog $\mathbb{Z}\times (\mathbb{Z}/n\mathbb{Z})$.

In this particular setting, the condition that the characteristic equation has no nontrivial solution in nonnegative integers corresponds to the condition that all the initial elements $(x,y) \in \mathbb{Z}\times (\mathbb{Z}/n\mathbb{Z})$ must have positive $x$-coordinate (or all negative in which case we can negate all terms). We will discuss the regularity of these Ulam sets, decomposition of elements in terms of the initial generators, and the possibility of them being finite.

\subsection{Regular Ulam sets}

In this section, we analyze the regularity of Ulam sets. This phenomenon has been studied extensively in the past for Ulam sequences and sets in $\mathbb{Z}^d$ \cite{regularity 4 v, finch regularity, noah, regularity 2 v}, but not in $\mathbb{Z}\times (\mathbb{Z}/n\mathbb{Z})$.

\begin{definition}
\label{regularity}
We say that a set $\mathcal{S} \subset \mathbb{Z}_{\ge 0} \times (\mathbb{Z}/n\mathbb{Z})$ is \emph{regular} if there exists some period $P>0$ such that, for sufficiently large $x$, we have $(x,y)\in \mathcal{S}$ if and only if $(x+P,y)\in \mathcal{S}$. 
\end{definition}

This is an extension of the notion of regularity of a sequence in $\mathbb{Z}$, which corresponds to the case $n=1$ since $\mathbb{Z}$ is equivalent to $\mathbb{Z}\times (\mathbb{Z}/1\mathbb{Z})$. In this particular setting, Finch \cite{finch regularity} proved that an Ulam sequence with finitely many even terms must be regular. In the same paper, he also conjectured that all regular Ulam sequences in $\mathbb{Z}$ satisfy this condition. In $\mathbb{Z}\times (\mathbb{Z}/n\mathbb{Z})$, the situation is more complicated, but the following theorem gives a useful characterization of regular Ulam sets similar to Finch's result. 

\begin{theorem} \label{E condition}
An Ulam set $\mathcal{U}$ in $\mathbb{Z}\times (\mathbb{Z}/n\mathbb{Z})$ is regular if and only if there exists some regular subset $E\subset \mathbb{Z}\times (\mathbb{Z}/n\mathbb{Z})$ such that the sum of two elements of $E$ is never in $E$ and only finitely many elements of $\mathcal{U}$ are not in $E$.
\end{theorem}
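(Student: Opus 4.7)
The plan is to prove both directions separately. For the sufficiency direction, assume that $E$ satisfies the stated conditions; let $F = \mathcal{U} \setminus E$, which is finite, and set $M = \max\{x : (x,y) \in F\}$. For $x > 2M$, I claim $(x,y) \in \mathcal{U}$ if and only if $(x,y) \in E$ and there is exactly one $f \in F$ with $(x,y) - f \in \mathcal{U}$: indeed, any representation $(x,y) = u_1 + u_2$ cannot have both summands in $F$ (their $x$-coordinates sum to more than $2M$), and if $(x,y) \in E$ cannot have both summands in $E$ (by the sum-freeness hypothesis). Writing $\mathcal{U}_x := \{y : (x,y) \in \mathcal{U}\}$, the column $\mathcal{U}_x$ is then a deterministic function of $(\mathcal{U}_{x-1}, \ldots, \mathcal{U}_{x-M})$ together with $x \bmod P_E$ (the period of $E$). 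The state space is finite, so the dynamics are eventually periodic and $\mathcal{U}$ is regular.

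For the necessity direction, assume $\mathcal{U}$ is regular with period $P$. Let $\mathcal{U}^*$ denote the eventually-periodic tail of $\mathcal{U}$ and let $A \subseteq \mathbb{Z}/P\mathbb{Z} \times \mathbb{Z}/n\mathbb{Z}$ be its image under reduction. I would define
$$E = \{(x,y) : x \geq 1 \text{ and } (x \bmod P,\ y) \in A\}.$$
Then $E$ is regular with period $P$ and $\mathcal{U} \setminus E \subseteq \mathcal{U} \setminus \mathcal{U}^*$ is finite. Since $E$ is $P$-periodic in $x$, the condition $E + E \cap E = \emptyset$ in $\mathbb{Z} \times \mathbb{Z}/n\mathbb{Z}$ is equivalent to $A$ being sum-free in the quotient $\mathbb{Z}/P\mathbb{Z} \times \mathbb{Z}/n\mathbb{Z}$.

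The heart of the argument is therefore proving that $A$ is sum-free. Suppose for contradiction that $(a_1, b_1), (a_2, b_2), (a_3, b_3) \in A$ satisfy $(a_1 + a_2, b_1 + b_2) \equiv (a_3, b_3)$. I would pick large $x_1, x_2$ with $x_i \equiv a_i \pmod P$ and $(x_i, b_i) \in \mathcal{U}^*$; regularity forces $c := (x_1 + x_2, b_1 + b_2)$ to lie in $\mathcal{U}$ with unique representation $(x_1, b_1) + (x_2, b_2)$. For large $k$, however, $c + (kP, 0)$ is also in $\mathcal{U}$ and admits the two representations $(x_1 + kP, b_1) + (x_2, b_2)$ and $(x_1, b_1) + (x_2 + kP, b_2)$, contradicting uniqueness once the pairs are shown to be distinct.

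I expect the main obstacle to be the degenerate case $(a_1, b_1) = (a_2, b_2)$ in the sum-freeness argument, where one cannot find $(x_1, b_1), (x_2, b_2) \in \mathcal{U}^*$ with distinct residues mod $P$. Here one must instead take $x_2 = x_1 + jP$ for some $j \neq 0$ and consider the representations $(x_1 + uP, b_1) + (x_1 + vP, b_1)$ of $c + (kP, 0)$ with $u + v = j + k$; for large $k$ there are many valid choices of $\{u, v\}$, yielding multiple distinct representations. Verifying distinctness both within each pair and between pairs across all sub-cases is the most technical step of the proof.
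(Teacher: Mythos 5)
Your proposal is correct and follows essentially the same route as the paper: a finite-state determinism argument for sufficiency (the column $\mathcal{U}_x$ is determined by the previous $M$ columns together with the periodic data of $E$), and for necessity taking $E$ to be (a periodic extension of) the tail of $\mathcal{U}$ and deriving a contradiction by exhibiting two representations of a large element via period shifts. The only notable difference is that the paper avoids your degenerate case $(a_1,b_1)=(a_2,b_2)$ entirely by using the single identity $(x_1,y_1)+(x_2+3P,y_2)=(x_1+P,y_1)+(x_2+2P,y_2)$, whose four summands are automatically distinct.
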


\begin{proof} 
We start by showing that the existence of such an $E$ guarantees the regularity of $\mathcal{U}$. Let $M$ be the greatest $x$-coordinate of all elements of $\mathcal{U}\setminus E$, and let $P$ be the period of $E$. For any $x$, let $E_x=\{y\in \mathbb{Z}/n\mathbb{Z}:(x,y)\in E\}$ and $\mathcal{U}_x=\{y\in \mathbb{Z}/n\mathbb{Z}:(x,y)\in \mathcal{U}\}$.

For each $x$, consider the $M$-tuple $L_x=(\mathcal{U}_{x-1},\mathcal{U}_{x-2}, \ldots, \mathcal{U}_{x-M})$. Since there are only $2^{Mn}$ possible such tuples, there is some $x_0>M$ and $k>0$ such that $L_{x_0}=L_{x_0+kP}$. By taking $x_0$ to be sufficiently large, we know, by the regularity of $E$, that $E_{x}=E_{x+kP}$ for every $x\ge x_0$. It then suffices to show for $x\ge x_0$ that $E_x$ and $L_x$ together uniquely determine $\mathcal{U}_{x}$, since applying it to $x_0, x_0+1,x_0+2, \ldots$ will prove that $\mathcal{U}$ is regular.

However, we see that this is a consequence of the condition that the sum of two elements of $E$ is never in $E$. In particular, if $y\in E_x$, then any representation of $(x,y)$ as a sum of previous elements of $\mathcal{U}$ must use one of the finitely many elements from $\mathcal{U}\setminus E$. Thus, $(x,y)$ is in $\mathcal{U}$ if and only if exactly one of $(x,y)-p$ is in $\mathcal{U}$ for $p\in \mathcal{U}\setminus E$. Moreover, if $y\not\in E_x$ then $(x,y)\not\in \mathcal{U}$ because $x\ge x_0>M$. This is enough to show that $E_x$ and $L_x$ together uniquely determine $\mathcal{U}_{x}$.

Now, for the converse, we only need to show that there exists such an $E$ given that $\mathcal{U}$ is regular with period $P$. It suffices to take $E$ to be the periodic section of $\mathcal{U}$. We see that, with this construction, $E$ is regular and that only finitely many elements of $\mathcal{U}$ are not in it. Now suppose that $(x_1,y_1)$ and $(x_2,y_2)$ are both in $E$, with $x_1\le x_2$. Then, since $E$ is the periodic section, we have the two distinct representations $$(x_1+x_2+3P,y_1+y_2)=(x_1,y_1)+(x_2+3P,y_2)=(x_1+P,y_1)+(x_2+2P,y_2),$$ 
so $(x_1+x_2+3P,y_1+y_2)\not\in \mathcal{U}$ and $(x_1+x_2,y_1+y_2)\not\in \mathcal{U}$. This satisfies all the properties of $E$, so the proof is complete.
\end{proof}

Note that, when $n=1$, the backwards direction of Theorem \ref{E condition} recovers a known result of Ross (the first part of Theorem 6.3.2 from \cite{ross-thesis}). Finch's result \cite{finch regularity} is also a consequence of Theorem \ref{E condition} for the case $n=1$ when $E$ is the set of odd numbers. More generally, we have the following: 

\begin{corollary}
Let $\mathcal{U}$ be an Ulam set in $\mathbb{Z}\times (\mathbb{Z}/n\mathbb{Z})$ with two generators, $v_1$ and $v_2$, and associated lattice generated by $(-a,b)$. Then the following must be true:
\begin{enumerate}
    \item If $\mathcal{U}$ contains finitely many elements with an even $x$-coordinate, it is regular.
    \item If $n$ is even and $\mathcal{U}$ contains finitely many elements with an even $y$-coordinate, it is regular.
    \item If $a$ is even and $\mathcal{U}$ contains finitely many elements of the form $2xv_1+yv_2$, it is regular.
\end{enumerate}
\end{corollary}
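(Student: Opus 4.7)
My plan is to prove each of the three statements as a direct application of Theorem \ref{E condition}, by exhibiting, in each case, a regular ``forbidden'' set $E \subset \mathbb{Z} \times (\mathbb{Z}/n\mathbb{Z})$ that is closed-under-sum-avoidance (i.e., $e_1 + e_2 \notin E$ for $e_1, e_2 \in E$) and contains all but finitely many elements of $\mathcal{U}$. In each case the hypothesis about ``finitely many elements'' of a certain type will give the last of these three properties automatically, so the main content of the proof is in defining $E$ and checking it is regular and sum-avoiding.

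For part (1), I will take $E = \{(x,y) \in \mathbb{Z}_{\ge 0} \times (\mathbb{Z}/n\mathbb{Z}) : x \text{ is odd}\}$. This is regular with period $P=2$, and the sum of two odd integers is even, so $E+E$ is disjoint from $E$. The hypothesis gives that only finitely many elements of $\mathcal{U}$ have even $x$-coordinate, i.e., lie outside $E$. For part (2), I will take $E = \{(x,y) : y \text{ is odd}\}$, which is well-defined precisely because $n$ is even; it is regular (it does not depend on $x$ at all) and sum-avoiding for the same parity reason, and the finiteness condition is supplied directly by hypothesis.

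Part (3) is the one requiring a little more care. Since the associated lattice is generated by $(-a,b)$ with $a$ even, any two representations $xv_1 + yv_2$ and $x'v_1 + y'v_2$ of the same element of $G$ satisfy $x \equiv x' \pmod{a}$, and in particular $x \equiv x' \pmod 2$. Hence the parity of the $v_1$-coefficient is a well-defined function on elements of $G$, and I can let $E$ be the set of elements whose $v_1$-coefficient is odd. Then $E$ is sum-avoiding (odd $+$ odd $=$ even in the $v_1$-coefficient), and the hypothesis says that only finitely many elements of $\mathcal{U}$ lie outside $E$. The one thing that needs checking is that $E$ is regular in the sense of Definition \ref{regularity}, i.e., that membership in $E$ depends on $(x_0, y_0) \in \mathbb{Z}_{\ge 0} \times (\mathbb{Z}/n\mathbb{Z})$ in a way that is eventually periodic in $x_0$ for each fixed $y_0$. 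This I expect to be the main (minor) obstacle.

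To handle regularity of $E$ in part (3), I will exploit the embedding $v_1 = (b', u)$, $v_2 = (a', v)$ from the introductory discussion and solve $xv_1 + yv_2 = (x_0, y_0)$ for $(x \bmod 2)$ as a function of $(x_0, y_0)$. Since $\gcd(a', b') = 1$, writing $x_0 = xb' + ya'$ and $y_0 \equiv xu + yv \pmod n$ pins down $(x,y)$ modulo the lattice $L$; the parity of $x$ will depend on $(x_0, y_0)$ through a formula that is periodic in $x_0$ (with period dividing $2a'$). I will verify this by a short direct computation showing that shifting $x_0$ by $2a'$ corresponds to shifting $(x,y)$ by $(2, 0)$ modulo $L$, which preserves the parity of $x$. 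This verifies regularity of $E$, after which Theorem \ref{E condition} yields the regularity of $\mathcal{U}$ and completes the proof.
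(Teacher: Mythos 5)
Your proposal is correct and is essentially identical to the paper's proof, which simply takes the same three choices of $E$ and declares the verification of the hypotheses of Theorem \ref{E condition} to be straightforward. The only difference is that you actually carry out the verification for case (3) (well-definedness of the parity of the $v_1$-coefficient from $a$ being even, and eventual periodicity of $E$ in each row), which the paper omits; note only the minor slip that since $v_1=(b',u)$ the relevant horizontal shift is by a multiple of $b'$ rather than $a'$, which does not affect the argument.
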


\begin{proof}
Take $E$, in Theorem \ref{E condition}, to be
\begin{enumerate}
    \item the set of elements with an odd $x$-coordinate.
    \item the set of elements with an odd $y$-coordinate.
    \item the set of elements of the form $xv_1+yv_2$, where $x$ is odd.
\end{enumerate}
It is straightforward to check that these choices of $E$ satisfy the conditions of Theorem \ref{E condition}, so $\mathcal{U}$ must be regular.
\end{proof}

These choices of $E$, especially the third one, seem to readily apply for most regular Ulam sets that we have encountered. More precisely, computations have shown that the following conjecture seems to hold:

\begin{conjecture}
\label{regularity conjecture}
Let $a>2$ be even and $b$ be sufficiently large. An Ulam set with generators $v_1$ and $v_2$ and associated lattice generated by $(-a,b)$ contains finitely many elements of the form $2xv_1+yv_2$ and is therefore regular.
\end{conjecture}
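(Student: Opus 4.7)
The plan is to reduce the conjecture to its finiteness claim via part (3) of the corollary immediately following Theorem \ref{E condition}: once we show that $\mathcal{U}$ contains only finitely many elements of the form $2xv_1+yv_2$, regularity follows automatically. The whole task therefore concentrates on proving sparsity of ``$x$-even'' elements. The $x$-parity of an element $u\in \mathcal{U}$ is well-defined precisely because $a$ is even: each non-generator $u$ has a unique representation $u=u_1+u_2$ with $u_1,u_2\in\mathcal{U}$, so recursively propagating the generator decomposition assigns to each $u$ a unique pair $(x(u),y(u))\in \mathbb{Z}_{\ge 0}^2$ with $u=x(u)v_1+y(u)v_2$. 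Initially $v_1$ has $x$-parity $1$ and $v_2$ has $x$-parity $0$, and parity adds along sums: odd+odd and even+even yield even, whereas odd+even yields odd.

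My proposed approach is a bootstrap on the first coordinate. Step one is to establish a structure theorem for the odd-parity portion of $\mathcal{U}$: for $b$ sufficiently large relative to $a$, past some threshold $x_0=x_0(a,b)$, the odd-$x$-parity elements of $\mathcal{U}$ should follow a predictable eventually-periodic pattern in the fibers $\{x\}\times (\mathbb{Z}/n\mathbb{Z})$. Step two is to exploit this pattern: any even-parity candidate element $w$ with first coordinate beyond $x_0$ must be a sum of two odd elements or of two even elements (by the parity rule). The inductive hypothesis bounds the even elements already produced, so the even+even case contributes only a bounded transient; meanwhile, the regularity of the odd-parity pattern should force either zero or at least two distinct odd+odd decompositions of $w$, because under a sufficiently linear structure the uniqueness condition generically fails. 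In either case $w$ is excluded, so all even-parity elements of $\mathcal{U}$ have first coordinate below $x_0$.

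The dominant obstacle is step one. The analogue for $a=2$ (where an embedding into $\mathbb{Z}$ is available) was already intricate in the work of Schmerl and Spiegel, as well as the authors' own extension; the case $a>2$ is strictly harder because one cannot project into $\mathbb{Z}$ and exploit its linear order, and must instead work directly inside $\mathbb{Z}\times (\mathbb{Z}/n\mathbb{Z})$ with its richer fiber structure. A natural weaker intermediate target is to show that odd-parity elements eventually occupy a specific coset of $\mathbb{Z}/n\mathbb{Z}$ disjoint from the fibers that would contain even-parity elements. A secondary subtlety is producing an explicit bound $b_0(a)$ beyond which the argument succeeds: the correct threshold is not obvious in advance, and any proof should yield such a bound as a by-product. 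Computational evidence in the paper suggests the transient phase is short once $b$ is large relative to $a$, which gives hope that an explicit threshold is extractable and that the bootstrap can be closed.
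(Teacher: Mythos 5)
This statement is a conjecture in the paper, not a theorem: the authors offer no proof, explicitly note that it is supported only by computation, and identify the obstruction as the difficulty of controlling the (possibly numerous) elements of the form $2xv_1+yv_2$ when $a$ is large. Your proposal does not close that gap. The reduction you describe is exactly the one the paper already supplies --- part (3) of the corollary to Theorem \ref{E condition} converts the finiteness claim into regularity --- so the entire mathematical content of the conjecture is the finiteness of the even-$\alpha_1$-parity elements, and that is precisely the step your ``step one'' and ``step two'' leave unproven. Your step one (an eventually periodic structure theorem for the odd-parity part of $\mathcal{U}$ past some threshold) is itself essentially equivalent in difficulty to the conjecture: the odd-parity elements are recursively generated by sums of one odd and one even element, so their structure cannot be pinned down without already controlling the even elements, which is what you are trying to prove. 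You acknowledge this is ``the dominant obstacle'' but give no argument for it.

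Step two also contains an unjustified leap. The claim that ``the regularity of the odd-parity pattern should force either zero or at least two distinct odd+odd decompositions of $w$, because under a sufficiently linear structure the uniqueness condition generically fails'' is a heuristic, not a proof, and the paper's own results show it is false at the boundary: in the settled case $a=2$, the even elements $v_2$ and $(b+1)v_2$ do have unique representations, and when $b$ is a power of $2$ there are in fact infinitely many even elements (Theorem \ref{b=2^e}) even though the set remains regular. Establishing that uniqueness fails for all sufficiently large even candidates required, even in the case $a=2$, the full machinery of Theorem \ref{regularity a=2} (the recursive formula $a_i=a_{i-1}+a_{i-b-1}$, the Pascal's-triangle-mod-$2$ array, and Lucas' theorem); nothing analogous is sketched here for $a>2$, where no embedding into $\mathbb{Z}$ is available and the number of exceptional even elements can grow with $a$. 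In short, the proposal correctly identifies the reduction the paper already makes and correctly locates the hard part, but supplies no proof of it; the conjecture remains open.
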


This is in agreement with Finch's conjecture \cite{finch regularity} about exactly which sequences in $\mathbb{Z}$ are regular (corresponding to the case where $a$ and $b$ are relatively prime). The main difficulty behind proving Conjecture \ref{regularity conjecture} is that when $a$ becomes large, there can be many elements outside of $E$ (of the form $2xv_1+yv_2$), making it hard to characterize the elements of $\mathcal{U}$.

We now focus on the case $a=2$. Note that this case is not included in Conjecture \ref{regularity conjecture} because when $b$ is a power of two, there are infinitely many elements of the form $2xv_1+yv_2$. For all other values of $b$, however, there are only two such elements. This generalizes Schmerl and Spiegel's theorem \cite{regularity 2 v} that states that an Ulam sequence in $\mathbb{Z}$ generated by $a=2$ and $b>3$, where b is odd, has exactly two even terms. Our result allows $b$ to be even (in which case the Ulam set cannot be embedded in $\mathbb{Z}$). 

\begin{theorem}
\label{regularity a=2}
Let $b>3$, and let $\mathcal{U}$ be an Ulam set generated by two elements $v_1$ and $v_2$ with associated lattice generated by $(-2,b)$. If $b$ is not a power of $2$, then $\mathcal{U}$ contains exactly two terms of the form $2xv_1+yv_2$: $v_2$ and $(b+1)v_2=2v_1+v_2$.
\end{theorem}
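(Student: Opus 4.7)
The plan is to proceed by strong induction on the first coordinate of elements of $\mathcal{U}$. A useful preliminary observation is that the lattice relation $2v_1 = bv_2$ lets us rewrite $2xv_1 + yv_2$ as $(xb+y)v_2$, so the elements to classify are precisely the integer multiples of $v_2$. The theorem thus reduces to showing that the only multiples of $v_2$ lying in $\mathcal{U}$ are $v_2$ itself and $(b+1)v_2 = 2v_1+v_2$. Along the way I would verify the easy base cases directly: $v_2 \in \mathcal{U}$ (as a generator), $v_1 + v_2 \in \mathcal{U}$ by its obvious unique representation, and $2v_1 + v_2 = v_1 + (v_1+v_2) \in \mathcal{U}$ since the only other candidate splittings $kv_2 + (b+1-k)v_2$ require some $jv_2$ with $2 \le j \le b$ to already lie in $\mathcal{U}$, which the inductive argument rules out.

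\textbf{Induction and case split.} Let $e = mv_2$ with $m \notin \{1, b+1\}$, and assume the claim holds for all elements of smaller first coordinate. Since the $v_1$-parity is well-defined on $\mathcal{U}$ (the lattice generator $(-2,b)$ preserves the parity of the $v_1$-coefficient), any representation $e = p + q$ with distinct $p, q \in \mathcal{U}$ has $p$ and $q$ of matching parity. If both are multiples of $v_2$, the inductive hypothesis forces $\{p, q\} = \{v_2, (b+1)v_2\}$, so $e = (b+2)v_2$; otherwise every representation of $e$ is of the form odd$+$odd. In the distinguished sub-case $e = (b+2)v_2 = 2v_1 + 2v_2$, I would exhibit a second representation $v_1 + (v_1 + 2v_2)$, after first verifying that $v_1 + 2v_2 \in \mathcal{U}$ via its unique representation $(v_1+v_2)+v_2$. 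This supplies two representations of $e$ and excludes it from $\mathcal{U}$.

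\textbf{Main obstacle.} For all other candidate $e = mv_2$, every representation is of the form odd$+$odd, and the core of the proof is to show that the number of odd$+$odd representations is never exactly one (either zero or at least two). This is the principal technical obstacle and requires a detailed structural description of the odd elements of $\mathcal{U}$, together with a pairing or cancellation argument matching pairs of odd elements that sum to $e$. The assumption that $b$ is not a power of $2$ enters precisely at this point: for $b$ a power of $2$, additional symmetries allow isolated odd$+$odd representations of multiples of $v_2$ to arise (producing infinitely many extra even elements), while for $b$ not a power of $2$ such representations can be forced into matched pairs. Carrying out this structural classification and the accompanying counting argument is, I anticipate, the bulk of the proof's length and the reason for its technical nature.
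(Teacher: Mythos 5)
Your setup is sound and matches the paper's framing: the parity of the $v_1$-coefficient is preserved by the lattice generator $(-2,b)$, so every representation of a multiple of $v_2$ is either even$+$even (which, by induction, can only be $v_2+(b+1)v_2=(b+2)v_2$, disposed of by the second representation $v_1+(v_1+2v_2)$) or odd$+$odd. The reduction to counting odd$+$odd representations is exactly where the real work lies. But that is also where your proposal stops: you state that one must show the number of such representations is never exactly one via ``a pairing or cancellation argument,'' without identifying any mechanism that would produce the second representation or rule out the first. As written, the proposal is a correct reduction followed by a restatement of the theorem's difficulty, not a proof.

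The paper's mechanism is quite specific and is worth contrasting with your sketch. Taking a hypothetical minimal counterexample $s=2v_1+Nv_2$, minimality implies that every representation of an odd element $v_1+iv_2$ with $i\le N$ must use $v_2$ or $(b+1)v_2$ as a summand, which yields the mod-$2$ linear recursion $a_i=a_{i-1}+a_{i-b-1}$ for the indicator sequence of odd elements. This recursion is then solved explicitly: writing $b=2^e c$ with $c$ odd, one gets $a_{q(b+1)+r}\equiv\binom{r-1+q}{q}\pmod 2$ for $0\le q\le 2^e$ (Pascal's triangle mod $2$), and a separate backward induction from $N$ gives $a_{N-q(b+1)}=1$. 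Lucas' theorem then shows $a_{(2^e+1)(b+1)}=1$ precisely because $c>1$, i.e.\ because $b$ is not a power of $2$; pairing this with $a_{N-(2^e+1)(b+1)}=1$ produces the forbidden second representation of $s$. There is also a preliminary case ($i>0$ in the paper's notation, arising from a ``no gap of length $b+1$'' lemma) that must be eliminated before one even reaches the Pascal's-triangle analysis. None of this structure --- the recursion, the explicit binomial formula, the backward propagation from $N$, or the precise point where the power-of-$2$ hypothesis enters --- appears in your proposal, so the central claim remains unproved.
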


The beginning of our proof is largely based off of the original proof where $b$ is odd. We will need to compute the first few elements of $\mathcal{U}$. Note that, by adding a suitable multiple of $-2v_1+bv_2$, each element of $\mathcal{U}$ can be expressed uniquely as $x{v_1}+y{v_2}$ for $x \in \{0,1\}$ and $y \ge 0$.

\begin{lemma}\label{first elts}
The elements of $\mathcal{U}$ of the form $v_1+yv_2$ for $0 \le y \le 3b+2$ are those with exactly one of the following:
\begin{enumerate}
    \item $0 \le y \le b$,
    \item $b< y \le 2b$, where $y \equiv b \pmod{2}$,
    \item $2b<y\le 3b+2$, where $y\equiv -1,0 \pmod{4}$.
\end{enumerate} 
Furthermore, the elements of $\mathcal{U}$ of the form $yv_2$ for $0 \le y \le 3b+2$ are those with $y=1$ or $y=b+1$.
\end{lemma}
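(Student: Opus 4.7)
The plan is to induct on the size $f(xv_1+yv_2)=bx+2y$ (the normalization common to both the $\mathbb{Z}$ and $\mathbb{Z}\times(\mathbb{Z}/2\mathbb{Z})$ embeddings), processing each element in the canonical form $xv_1+yv_2$ with $x\in\{0,1\}$ and $y\ge 0$ in order of size. The central structural observation is that any candidate representation $u=u_1+u_2$ splits by the parity of the $x$-coordinate: for $u=v_1+yv_2$, one summand must be of the form $y_1v_2$ and the other of the form $v_1+y_2v_2$ with $y_1+y_2=y$; for $u=yv_2$, there are two types, namely Type A, where both summands have the form $y_iv_2$ with $y_1+y_2=y$, and Type B, where both have the form $v_1+y_iv_2$ with $y_1+y_2=y-b$ (using $2v_1=bv_2$).

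I would first clear the base layer: a direct induction on $y$ shows $yv_2\notin\mathcal{U}$ for $2\le y\le b$, since the only candidate representation $v_2+(y-1)v_2$ fails by hypothesis. Then for range (1), each $v_1+yv_2$ with $0\le y\le b$ has the unique representation $v_2+(v_1+(y-1)v_2)$; the alternative $y_1=b+1$ summand is ruled out because it would force $y\ge b+1$. Then $(b+1)v_2\in\mathcal{U}$ via the unique Type B representation $v_1+(v_1+v_2)$, Type A being blocked since no $y_iv_2$ with $2\le y_i\le b$ is in $\mathcal{U}$. For the range (2) and (3) claims on $v_1+yv_2$, the only candidate representations come from $y_1=1$ and $y_1=b+1$, requiring $v_1+(y-1)v_2\in\mathcal{U}$ and $v_1+(y-b-1)v_2\in\mathcal{U}$ respectively. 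In range (2) the $y_1=b+1$ option always works (it lands in range (1)), while the $y_1=1$ option alternates inductively by the range (2) parity condition, yielding $y\equiv b\pmod 2$. In range (3), tracking which earlier range each of $y-1$ and $y-b-1$ lies in, across the four classes $y\bmod 4$, produces the stated condition $y\equiv -1,0\pmod 4$.

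The main obstacle I expect is the final step: verifying $yv_2\notin\mathcal{U}$ for $b+2\le y\le 3b+2$. Type A contributes at most one representation, and only at $y=b+2$ via $v_2+(b+1)v_2$, since both summands must lie in $\{v_2,(b+1)v_2\}$. Type B requires counting distinct pairs $(y_1,y_2)$ with $y_1+y_2=y-b$, $y_1<y_2$, and both $v_1+y_iv_2\in\mathcal{U}$; this count is $\lceil(y-b)/2\rceil$ when both indices lie in range (1), but near $y=2b+1$, $2b+2$, and $3b+2$ some pairs are eliminated by the parity conditions of ranges (2) and (3) just established. The key claim is that under $b>3$ the total number of representations is always $0$ or at least $2$ and never exactly $1$, so no such $yv_2$ enters $\mathcal{U}$. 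The delicate bookkeeping at these boundary values, together with the need to keep the ranges (2) and (3) parity analyses self-consistent under induction, forms the technical heart of the lemma.
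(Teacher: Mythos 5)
Your plan is essentially the paper's proof: a simultaneous strong induction over all elements $xv_1+yv_2$ with $x\in\{0,1\}$ processed in increasing order of size, the same parity split of representations (for $v_1+yv_2$ one summand of each type; for $yv_2$ your Types A and B with $y_1+y_2=y-b$ via $2v_1=bv_2$), and the same case analysis producing the conditions in ranges (1)--(3). Everything you actually carry out matches the paper and is correct, including the observation that Type A contributes only at $y=b+2$ and that the dichotomy $y_1\in\{1,b+1\}$ drives the parities in ranges (2) and (3).

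The one step you defer --- that $yv_2$ has $0$ or at least $2$ representations for $b+2\le y\le 3b+2$ --- is not a side issue: it is the bulk of the written proof, and asserting that ``the total is never exactly $1$'' is precisely the claim that needs proving, so as it stands this is a genuine gap. The paper closes it by exhibiting two explicit representations for every such $y$ (the count is in fact always at least $2$ in this range, never $0$): for $y=b+2$, the representations $(b+1)v_2+v_2$ and $(v_1+2v_2)+v_1$; for $b+2<y\le 3b-3$, setting $i=\lfloor (y-b-1)/2\rfloor$ and $j=\lceil (y-b+1)/2\rceil$, the pair $(v_1+iv_2)+(v_1+jv_2)$ versus $(v_1+(i-1)v_2)+(v_1+(j+1)v_2)$, where all four indices land in range (1); and for $3b-2\le y\le 3b+2$, the pairs $(v_1+\delta v_2)+(v_1+(y-b-\delta)v_2)$ and $(v_1+(\delta+2)v_2)+(v_1+(y-b-\delta-2)v_2)$ with $\delta\in\{1,2\}$ chosen by parity so that the large indices satisfy the congruence conditions of ranges (2)/(3). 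Your counting formula $\lceil (y-b)/2\rceil$ is the right starting point, but you must convert it into such explicit witness pairs and check the boundary indices against the parity conditions just established (this is also where $b>3$ enters, to guarantee the two exhibited pairs are genuinely distinct); until that bookkeeping is done, the induction is not closed.
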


\begin{proof}
This is straightforward with strong induction on $y$, and where we need $b > 3$. For the base case, $y=0$, we see that $v_1$ is a generator and $0$ is clearly not in $\mathcal{U}$.

For the inductive step, fix $y$ and assume that the lemma holds for all smaller values of $y$. If $1 \le y \le b$ then $v_1+yv_2$ has the unique representation $$v_1+yv_2=(v_1+(y-1)v_2)+v_2.$$ For $b < y \le 2b$, we have the representation $$v_1+yv_2=(b+1)v_2+(v_1+(y-b-1)v_2).$$ If $y \equiv b \pmod{2}$, this representation is unique. Otherwise, we also have $$v_1+yv_2=(v_1+(y-1)v_2)+v_2.$$ Now let $2b < y \le 3b+2$. If $y \equiv 0 \pmod{4}$, then $v_1+y{v_2}$ has the unique representation $$v_1+y{v_2}=(v_1+(y-1)v_2)+v_2.$$ If $y \equiv -1 \pmod{4}$, we have the unique representation $$v_1+yv_2=(v_1+(y-b-1)v_2)+(b+1)v_2.$$ If $y\equiv 1 \pmod{4}$, we have two representations: $$v_1+y{v_2}=(v_1+(y-b-1)v_2)+(b+1)v_2= (v_1+(y-1)v_2)+v_2.$$ Finally, if $y\equiv 2 \pmod{4}$ then we have no representation of $v_1+y{v_2}$.

Continuing the induction for elements of the form $yv_2$, we see that if $1< y \le b$, then $yv_2$ has no representation. The element $v_2(b+1)=2v_1+v_2$ has the unique representation $v_2(b+1)=(v_1+v_2)+v_2$, so it is in $\mathcal{U}$. The element $y(b+2)=(b+1)v_2+v_2=(v_1+2v_2)+ v_1$, however, can be expressed in two ways.

For $b+2<y \le 3b-3$, we also have two representations. Precisely, if we let $$i=\left \lfloor \frac{y-b-1}{2} \right \rfloor \quad \text{and} \quad j=\left \lceil \frac{y-b+1}{2} \right\rceil,$$ 
then we see that $$yv_2=(v_1+iv_2)+(v_1+jv_2)=(v_1+(i-1)v_2)+(v_1+(j+1)v_2).$$ 
Finally, for $3b-2 \le y \le 3b+2$, we have the following two representations $$yv_2=(v_1+(y-b-\delta)v_2)+(v_1+\delta v_2)=(v_1+(y-b-\delta-2)v_2)+(v_1+(\delta+2) v_2)$$ where we choose $\delta$ to be either $1$ or $2$ depending on the parity of $b$ and $y$. 
\end{proof} 

\begin{proof}[Proof of Theorem \ref{regularity a=2}]
Assume for the sake of contradiction that there is another element $$s=(N+b)v_2=2v_1 + Nv_2$$ of $\mathcal{U}$ and take such an element with minimal $N$. We consider the binary sequence $(a_i)$, where $a_i \in \mathbb{Z}/2\mathbb{Z}$ and
\[ 
a_i= \begin{cases} 
      0 & \text{if $v_1+iv_2$ is not in $\mathcal{U}$}\\
      1 & \text{otherwise}
      \end{cases}
\]
keeps track of the elements of the form $v_1+iv_2$ that are in the set. Since every representation of an element $v_1+iv_2$ as a sum of smaller terms in $\mathcal{U}$ must use an element of the form $iv_2$, we have, for $b<i\le N$, that $a_i=1$ if and only if exactly one of $a_{i-1}$ and $a_{i-b-1}$ equals $1$. This gives us the recursive formula $$a_i = a_{i-1}+a_{i-b-1},$$ which holds for $b<i\le N$. From now on, any evaluation of the $a_i$'s will be understood to be taken modulo $2$. We can now use this recursive relation to show that there are no gaps in $(a_i)$ of length $b+1$.

Precisely, we claim that, for every $b \le t \le N$, we have $1 \in \{a_{t-i}:0\le i \le b\}$. Suppose, for the sake of contradiction, that $t$ is a minimal counterexample. In this case, $a_{t-1}=a_t=0$. Clearly, $t>b$, so by the recursive formula we have $a_{t-b-1}=a_{t-1}-a_t=0$, but that means that $t-1$ is actually a smaller counterexample, which is a contradiction.

Applying this to $t = N$, we obtain that there is some $0\le i \le b$ such that $a_{N-i}=1$. Then, by Lemma \ref{first elts}, we have the following representation of $s$ as the sum of two distinct elements of $\mathcal{U}$: $$s=(s-v_1-iv_2)+(v_1+iv_2).$$ Note that the elements are distinct because otherwise $s=2(v_1+iv_2)=(b+i)v_1$, contradicting Lemma \ref{first elts}.
Since the representation must be unique, the choice of $i$ is also unique. Thus, $a_{N-j}=0$ for $0\le j \le b$ and $j \ne i$.

Using the recursive formula backwards, we can determine $a_{N-j}$ for $b<j\le 2b$. We find that $a_{N-j}=1$ if and only if $j=i+b+1$ or $j=i+b$ (for $i>0$). We now have two cases to treat separately:

\textbf{Case 1: $i>0$}. In this case, both $s-v_1-(i+b)v_2$ and $s-v_1-(i+b+1){v_2}$ are in $\mathcal{U}$. However, by Lemma \ref{first elts}, one of ${v_1}+(i+b){v_2}$ and ${v_1}+(i+b+1){v_2}$ is in $\mathcal{U}$. Thus, we have a new representation $$s=(s-v_1-(i+b+\delta){v_2})+(v_1+(i+b+\delta){v_2}),$$ for either $\delta=0$ or $\delta=1$. If the summands are distinct, then this is a contradiction. Thus the summands must be the same, so $b+i+\delta=N-(b+i+\delta)$. We know, however, that $a_\ell=0$ for $N-i-b<\ell<N-i$; thus, $a_\ell=0$ for $i+b+2\delta < \ell < i+2b+2\delta$. This, however, contradicts Lemma \ref{first elts}, so Case 1 is impossible.

\textbf{Case 2: $i=0$}. This is where the proof diverges from that given by Schmerl and Spiegel \cite{regularity 2 v}. Our goal is to find some $k>0$ for which $a_k=a_{N-k}=1$, which will give us a second representation of $s$ as a sum of two distinct elements of $\mathcal{U}$. The recursive formula suggests arranging the $a_i$'s into an array of height $b+1$. This way, every number (except for the upper row) is the sum of the number to its left and the number above modulo $2$. It is not surprising with this rule to see the beginning of Pascal's triangle modulo $2$ appearing. (Note that this is not the first time that it appears in the study of Ulam sets; there are similar connections in Theorem \ref{thm:one1} and \cite{regularity 4 v, V-sequences}.)

\begin{table}[hbt!]
\centering
\begin{tabular}{llllll}
1 & 0 & 0 & 0 & 0 & 1 \\
1 & 1 & 1 & 1 & 1 & 0 \\
1 & 0 & 1 & 0 & 1 & $\vdots$ \\
1 & 1 & 0 & 0 & 1 &   \\
1 & 0 & 0 & 0 & 1 &   \\
1 & 1 & 1 & 1 & 0 &   \\
1 & 0 & 1 & 0 & 0 &   \\
1 & 1 & 0 & 0 & 0 &   \\
1 & 0 & 0 & 0 & 0 &   \\
1 & 1 & 1 & 1 & 1 &   \\
1 & 0 & 1 & 0 & 1 &   \\
1 & 1 & 0 & 0 & 1 &   \\
1 & 0 & 0 & 0 & 1 &   
\end{tabular}
\caption{The first few columns of the array formed for $b=12$. We see the beginning of Pascal's triangle in the first $5$ columns.}
\end{table}

For $b=2^e\cdot c$ for some odd value $c$, these observations suggest trying to prove the following: 

For $0 \le q\le 2^e$ and $0\le r <b+1$, we have 

 \[
a_{q(b+1)+r}= \begin{cases} 
      1 & \text{if $r=q=0$}\\
      \binom{r-1+q}{q} \pmod{2} & \text{otherwise}
      \end{cases}.
\]

We prove this claim using strong induction and our recursive formula. The base case $q=0$ has already been treated in Lemma \ref{first elts}. For our inductive step, suppose that $q>0$ and our formula holds for smaller $q$ as well as for the same value $q$ but with smaller values $r$. If $r>0$, we have $$a_{q(b+1)+r}= a_{(q-1)(b+1)+r}+a_{q(b+1)+r-1}= {r-2+q \choose q-1}+{r-2+q \choose q}={r-1+q \choose q}.$$

If $r=0$, we can assume that $q>1$ (the case $q=1$ and $r=0$ has also been treated in Lemma \ref{first elts}). We then have $$a_{q(b+1)}=a_{(q-1)(b+1)}+a_{(q-1)(b+1)+b}=0+{b+q-2 \choose q-1}.$$ 
Since $q-1<2^e$, this binomial coefficient is zero modulo $2$ by Lucas' Theorem \cite{lucas theorem}, so $a_{q(b+1)}=0={q-1 \choose q}$, and the proof of our claim is complete.

Using this formula, we see that $$a_{(2^e+1)(b+1)}=a_{2^e(b+1)}+a_{2^e(b+1)+b}=0+{b-1+2^e \choose 2^e}=0+{(c+1)2^e-1 \choose 2^e}=1,$$ by Lucas' Theorem. 

We now work backwards to show that $a_{N-(2^e+1)(b+1)}=1$. Specifically, we claim that, for $0 \le q\le b$ and $0 < r < b+1-q$, $a_{N-q(b+1)-r}=0$ and $a_{N-q(b+1)}=1$.

This similarly follows via strong induction by using our recursive formula. For the base case, $q=0$, we already know that $a_{N-r}=0$ if $0<r<b+1$ and $a_{N-r}=1$ if $r=0$. For larger values $0 < q\le b$ and $q < r < b+1-q$, we see that $$a_{N-q(b+1)-r}=a_{N-(q-1)(b+1)-r}-a_{N-(q-1)(b+1)-(r+1)}=0,$$ by our inductive hypothesis and our recursive formula.
Finally, we have $$a_{N-q(b+1)}=a_{N-(q-1)(b+1)}-a_{N-(q-1)(b+1)-1}=1.$$

We therefore know that if $b>2^e$, then $a_{N-(2^e+1)(b+1)}=1$. Thus, we only need to check that $N-(2^e+1)(b+1) \ne (2^e+1)(b+1)$ to show that there is another representation of $s$ as a sum of two distinct elements of $\mathcal{U}$. However, we know that $a_{(2^e)(b+1)}=0$ and $a_{N-(2^e+2)(b+1)}=1$ (because $b\ne 2^e$), which is a contradiction if $N=2(2^e+1)(b+1)$. This completes the proof.
\end{proof}

Note that, since there is no new term of the form $yv_2$ by Lemma \ref{first elts}, the recursive formula we found for the sequence $(a_i)$ actually holds forever. The sequence given by this recursive formula has actually already been studied, for instance in \cite{binary sequence}. The length of its period is given by OEIS Sequence A046932. 

The proof of Theorem \ref{regularity a=2} fails if $b$ is a power of two. In this case, there are infinitely many elements of the form $yv_2$. However, $\mathcal{U}$ is still regular, and there even exists a simple closed formula characterizing the elements of $\mathcal{U}$. We can arrange the sequence $(a_i)$ into an array of height $b+1$ as before, which displays an infinite pattern as shown in Table \ref{array01}.

\begin{table}[hbt!]
\centering
\begin{tabular}{llllllllllllllllllll} 
1 & 0 & 0 & 0 & 0 & 0 & 0 & 0 & 0 & 0 & 0 & 0 & 0 & 0 & 0 & 0 & 0 & 0 & 0 &          \\
1 & 1 & 1 & 1 & 1 & 1 & 1 & 1 & 1 & 0 & 1 & 0 & 1 & 0 & 1 & 0 & 1 & 0 & 1 &          \\
1 & 0 & 1 & 0 & 1 & 0 & 1 & 0 & 1 & 0 & 1 & 0 & 1 & 0 & 1 & 0 & 1 & 0 & 1 &          \\
1 & 1 & 0 & 0 & 1 & 1 & 0 & 0 & 1 & 0 & 0 & 0 & 1 & 0 & 0 & 0 & 1 & 0 & 0 &          \\
1 & 0 & 0 & 0 & 1 & 0 & 0 & 0 & 1 & 0 & 0 & 0 & 1 & 0 & 0 & 0 & 1 & 0 & 0 & $\cdots$ \\
1 & 1 & 1 & 1 & 0 & 0 & 0 & 0 & 1 & 0 & 1 & 0 & 0 & 0 & 0 & 0 & 1 & 0 & 1 &          \\
1 & 0 & 1 & 0 & 0 & 0 & 0 & 0 & 1 & 0 & 1 & 0 & 0 & 0 & 0 & 0 & 1 & 0 & 1 &          \\
1 & 1 & 0 & 0 & 0 & 0 & 0 & 0 & 1 & 0 & 0 & 0 & 0 & 0 & 0 & 0 & 1 & 0 & 0 &          \\
1 & 0 & 0 & 0 & 0 & 0 & 0 & 0 & 1 & 0 & 0 & 0 & 0 & 0 & 0 & 0 & 1 & 0 & 0 &     
\end{tabular}
\caption{The first few columns of the array formed by the sequence $(a_i)$ for $b=8$. We see that it consists of the beginning of Pascal's triangle repeated, with the odd columns deleted after the first block.}
\label{array01}
\end{table}

\begin{theorem}
\label{b=2^e}
For $e>1$, let $b=2^e$ and let $\mathcal{U}$ be the Ulam set generated by two elements, $v_1$ and $v_2$, with associated lattice generated by $(-2,b)$. Then $yv_2$ is in $\mathcal{U}$ if and only if $y=1$ or $y=(b+1+kb(b+1))$ for some integer $k \ge 0$.
Moreover, if $i=p\cdot b(b+1) + q\cdot(b+1) +r$ with $p\ge 0$, $0\le q <b$, and $0\le r < b+1$, then (defining $a_i$ as in Theorem \ref{regularity a=2}) we have
\[ 
a_i= \begin{cases} 
      1 & \text{if $p=q=r=0 $}\\
      \binom{r-1+q}{q} \pmod{2} & \text{if $r>0$ and ($p=0$ or $q$ is even)} \\
      0 & \text{otherwise}
      \end{cases}.
\]
\end{theorem}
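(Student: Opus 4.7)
The proof will proceed by strong induction on the size $f$, establishing both assertions simultaneously. The first observation is structural: the relation $2v_1 = bv_2$ forces any sum of two elements of the form $v_1 + (\cdot)v_2$ to collapse onto the axis $\{yv_2 : y \ge 0\}$, so the only way to represent $v_1 + iv_2$ as a sum of two distinct previous elements of $\mathcal{U}$ is as $(v_1 + jv_2) + (yv_2)$ with $j+y=i$. Letting $Y$ denote the set of $y \ge 1$ for which $yv_2 \in \mathcal{U}$, and adopting the convention $a_0 = 1$ (reflecting $v_1 \in \mathcal{U}$), the indicator sequence therefore satisfies the $\mathbb{F}_2$-recurrence
$$a_i \equiv \sum_{y \in Y,\ y \le i} a_{i-y} \pmod{2}.$$
Under the inductive hypothesis that $Y = \{1\} \cup \{b+1+kb(b+1) : k \ge 0\}$ holds up to the current step, this specializes to $a_i \equiv a_{i-1} + \sum_{k \ge 0} a_{i-(b+1+kb(b+1))} \pmod 2$, with terms having negative index interpreted as zero.

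I would then verify that the proposed closed form satisfies this recurrence. In the decomposition $i = pb(b+1) + q(b+1) + r$, each relevant shift of $i$ acts predictably on $(p,q,r)$, so the verification reduces to an $\mathbb{F}_2$-linear identity among binomial coefficients $\binom{r-1+q}{q}$. Since $b = 2^e$ is a power of $2$, Lucas' theorem controls these parities cleanly. When $p = 0$ the higher-order shifts fall out of range, leaving just $a_i \equiv a_{i-1} + a_{i-(b+1)}$, which reproduces the Pascal triangle mod $2$ pattern from the proof of Theorem \ref{regularity a=2}. When $p \ge 1$, the extra contributions from the new $yv_2$'s must precisely cancel the ``odd-$q$'' entries that Pascal's recurrence would otherwise propagate, together with the $r=0$ boundary entries, yielding the dichotomy stated in the theorem.

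To close the induction, I must verify that $Y$ is exactly as claimed. A candidate element $yv_2$ with $y > 1$ has representations of two kinds: $(v_1 + j_1 v_2) + (v_1 + j_2 v_2)$ with $j_1 + j_2 = y - b$, $j_1 \ne j_2$, and $a_{j_1} = a_{j_2} = 1$; and $(jv_2) + (j'v_2)$ with $j + j' = y$, $j \ne j'$, and $j, j' \in Y$. Using the closed form for $a_i$ (already verified at smaller values by the induction) together with Kummer's theorem on 2-adic carries in the equation $j_1 + j_2 = y - b$, one counts these pairs and shows the total is exactly $1$ when $y = b+1+kb(b+1)$ and is either $0$ or $\ge 2$ for every other $y > 1$. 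This counting step is the main obstacle, since the new $yv_2$'s perturb the Pascal mod $2$ pattern very sparsely, and ruling out the many candidate values near $b+1+kb(b+1)$ requires delicate bookkeeping of carries. A clean way to package the whole argument would be via the $\mathbb{F}_2$-generating function $A(x) = \sum_{i \ge 0} a_i x^i$: under the conjectured $Y$ one expects
$$A(x)\left(1 + x + \frac{x^{b+1}}{1 + x^{b(b+1)}}\right) \equiv 1 \pmod{2}$$
in $\mathbb{F}_2[[x]]$, which reduces the entire verification to a single rational identity that can be checked against the proposed closed form.
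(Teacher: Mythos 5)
Your proposal rests on the claim that the indicator sequence satisfies the $\mathbb{F}_2$-recurrence $a_i \equiv \sum_{y\in Y,\,y\le i} a_{i-y} \pmod 2$, and the proposed generating-function identity packages exactly this congruence. That recurrence is false, and the gap is fatal to the strategy. The membership criterion is that $v_1+iv_2$ has \emph{exactly one} representation, i.e.\ that the integer count $\#\{y\in Y: y\le i,\ a_{i-y}=1\}$ equals $1$; this agrees with the parity of the count only when the count is at most $2$. In Theorem \ref{regularity a=2} the set $Y$ has just two elements, so the parity formulation is legitimate there, but here $Y$ is infinite and odd counts $\ge 3$ genuinely occur. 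Concretely, take $i=2b(b+1)+(b+1)+1$ (so $p=2$, $q=1$, $r=1$). The shifts $i-(kb(b+1)+b+1)$ for $k=0,1,2$ land at indices with $(p',q',r')=(2,0,1),(1,0,1),(0,0,1)$, all of which have $a=\binom{0}{0}=1$ under the claimed closed form, while $a_{i-1}=0$; the count is $3$, so $a_i=0$ as the theorem asserts, yet your congruence predicts $a_i\equiv 3\equiv 1$. The paper avoids this by never reducing to a blind parity check: in each case of its induction it first shows that all but at most two of the candidate summands vanish (or that several coincide in value, forcing the count to be $0$ or $\ge 2$), and only then reads off whether the count is exactly one. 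Any correct proof must retain this ``exactly one'' bookkeeping; the rational identity in $\mathbb{F}_2[[x]]$ cannot encode it.

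A secondary point: the closing step, where you must show that $yv_2\in\mathcal{U}$ precisely for $y=1$ and $y=b+1+kb(b+1)$, is where the paper spends most of its effort (an extended case analysis on $N=p_Nb(b+1)+q_N(b+1)+r_N$ exhibiting a second representation in every non-exceptional case). Your proposal correctly identifies the two kinds of representations to count and correctly flags this as the main obstacle, but it is left entirely as a plan; the appeal to Kummer's theorem does not by itself produce the required pairs of distinct representations. The setup of your first paragraph (the collapse of representations of $v_1+iv_2$ onto pairs of the form $(v_1+jv_2)+(yv_2)$, and the strong induction on size) does match the paper, so the skeleton is right, but the parity shortcut must be abandoned and the terminal casework actually carried out.
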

\begin{proof} We proceed with strong induction and casework. Since there are numerous cases with technical details, we go through them quickly. The case $p=0$ follows from the first claim in Case~2 of the proof of Theorem \ref{regularity a=2}, so suppose $p>0$. 

If $r=0$ and $q=1$, then $$a_{i-1}=a_{(p-1) b(b+1) + b}=\binom{b-1}{0}=1 \quad \text{and} \quad a_{i-(pb(b+1)+(b+1))}=a_0=1,$$ so we have two distinct representations; hence, $a_i=0$. If $r=0$ and $q> 1$, then $$a_{i-1}=a_{pb(b+1) + (q-1)(b+1) +b}={b+q-2 \choose q-1}=0$$ and
$$a_{i-(kb(b+1)+b+1)}=a_{(p-k) b(b+1) + (q-1)(b+1)}=0,$$ which implies that no representation exists, so $a_i=0$. If $r=0$ and $q=0$, then
$$a_{i-1}=a_{(p-1)b(b+1) + (b-1)(b+1) +b}={2b-2 \choose b-1}=0$$
and
$$a_{i-(kb(b+1)+b+1)}=a_{(p-k-1) b(b+1) + (b-1)(b+1)}=0,$$ so, once again, there is no representation, meaning $a_i=0$.

Now suppose that $r>0$. If $q$ is odd, then $a_{i-1}=0$ and $a_{i-(kb(b+1)+b+1)}= a_{i-(b+1)}$ for all $k$. This means that if a representation exists, then there must be at least two, so $a_i=0$.
If $q=0$ and $r=1$, then $a_{i-1}=0$ and $a_{i-(kb(b+1)+b+1)}=0$ except for when $k=(p-1)$, where $$a_{(b-1)(b+1)+1}=\binom{(b-1)(b+1)}{(b-1)(b+1)}=1,$$ 
so $a_i=1$. If $q=0$ and $r>1$, then both $a_{i-1}=0$ and $a_{i-(kb(b+1)+b+1)}=0$ for any $k$, so $a_i=1$ because there is, once again, a unique representation.

If $q>0$ is even, then $a_{i-(kb(b+1)+b+1)}=0$ except for when $k=p$, in which case $$a_{i-(kb(b+1)+b+1)}=a_{(q-1)(b+1)+r}.$$ We therefore have
$$a_i=a_{(q-1)(b+1)+r}+a_{pb(b+1) + q(b+1) +r-1}={q+r-2 \choose q-1}+{q+r-2 \choose q}={q+r-1 \choose q}.$$

All that's left to be shown is that the only elements of the form $yv_2$ in $\mathcal{U}$ are the ones we claimed. It is clear that $v_2\in \mathcal{U}$ and we can check that, for $k\ge 0$, the only representation of $(b+1+kb(b+1))v_2=2v_1+(1+kb(b+1))v_2$ is $v_1+(v_1+(1+kb(b+1))v_2)$.

Now suppose that some other element $2v_1+Nv_2$ has a unique representation. Consider expressing $N$ as $N=p_N\cdot b(b+1) + q_N\cdot(b+1) +r_N$, with $p_N\ge 0$, $0\le q_N <b$, and $0\le r_N < b+1$. If $r_N> 1$, then $a_{p_N b(b+1)+r_N-1}=a_{q_N(b+1)+1}=1$, so we have the representation $$2v_1+Nv_2=(v_1+(q_N(b+1)+1)v_2)+(v_1+(p_N b(b+1)+r_N-1)v_2).$$
Moreover, for every $k$, $a_{2k(b+1)+1}=a_{2k(b+1)+2}=1$. Let $\ell$ be the even number between $0$ and $2(b+1)-1$ such that $N-\ell$ is of the form $2k(b+1)+1$ or $2k(b+1)+2$. Then we also have the representation $2v_1+Nv_2=(v_1+\ell v_2)+(v_1+(N-\ell)v_2)$. The two representations we found must actually be equal, so $\ell=q_N(b+1)+1$ or $\ell=p_N b(b+1)+r_N-1$. 

In the first case, since $\ell<2(b+1)$, we have $q_N=0$ or $q_N=1$. If $q_N=0$, then we have the representation $2v_1+Nv_2=v_1+(v_1+Nv_2)$ which is distinct from the previous one. Thus, $q_N=1$. If $r_N>3$, then we have the new representation $$2v_1+Nv_2=(v_1+(b+4)v_2)+(v_1+(p_N b(b+1)+r_N-3)v_2).$$ If $r_N\in \{2,3\}$, then we have the other representation $$2v_1+Nv_2=(v_1+4v_2)+(v_1+(N-4)v_2).$$ In the second case, $p_N=0$, so $\ell=r_N-1<b$. Then, we have the new representation $$2v_1+Nv_2=(v_1+(r_N-2)v_2)+(v_1+(q_N(b+1)+2)v_2),$$ which is distinct from the previous one. This concludes the case $r_N>1$.

Now if $r_N=q_N=0$ and $p_N>1$, we have the following two distinct representations: $$2v_1+Nv_2=(v_1+((b-1)(b+1)+1)v_2)+(v_1+((p_N-1)b(b+1)+b)v_2)$$
and
$$2v_1+Nv_2=(v_1+2bv_2)+(v_1+((p_N-1)b(b+1)+(b-2)(b+1)+2)v_2).$$

If $r_N=0$ and $q_N\ne 0$, then $$2v_1+Nv_2=(v_1+((q_N-1)(b+1)+1)v_2)+(v_1+(p_Nb(b+1)+b)v_2).$$ 
If $q_N$ is odd, then we also have the representation $2v_1+Nv_2=(v_1+bv_2)+(v_1+(N-b)v_2)$, which is distinct from the other. If $q_N$ is even, then $2v_1+Nv_2=(v_1+2bv_2)+(v_1+(N-2b)v_2)$ is also a distinct representation.

Finally, consider the case $r_N=1$. We have already shown that if $q_N=0$, there is a unique representation. Thus, suppose $q_N>0$. If $q_N$ is even, then we have the representations
$$2v_1+Nv_2=v_1+(v_1+Nv_2)$$
and
$$2v_1+Nv_2=(v_1+((b+1)+q_N+1)v_2)+(v_1+(p_Nb(b+1)+(q_N-2)(b+1)+b-q_N+1)v_2).$$
If $q_N$ is odd, we have $$2v_1+Nv_2=(v_1+(q_N+1)v_2)+(v_1+(p_Nb(b+1)+(q_N-1)(b+1)+b-q_N+1)v_2)$$
and
$$2v_1+Nv_2=(v_1+(q_N+2)v_2)+(v_1+(p_Nb(b+1)+(q_N-1)(b+1)+b-q_N)v_2),$$ which are again different representations. We have now exhausted all cases, and the proof is complete.
\end{proof}

We also observe that the Ulam sets with associated lattice generated by $(-n,n)$, where $n=2^e+2$ for some integer $e\ge 3$, appear to be regular.
In this case, we notice that there seems to be no element of the form $2xv_1+2yv_2$. Moreover, based on computations, we conjecture a full characterization of all elements of the form $2xv_1+yv_2$ that are in $\mathcal{U}$. In particular, we believe that $2xv_1+yv_2$ is in $\mathcal{U}$ if and only if $$2xv_1+yv_2=(p(2^e+1)(2^e+2)+q(2^e+2)+r)v_1+v_2$$ for some $p\ge 0$, $0\le q <2^e+1$, and $0\le r <2^e+2$ even, with $p$, $q$, and $r$ satisfying one of the following conditions:

\begin{enumerate}
    \item $q=r=0$
    \item $p=0, q=1, r=0$
    \item $t,q<2^e$ and one of the following is true, where $t=2^e+1-r$.
        \begin{itemize}
            \item $p=0$ and ${q+t \choose t}\equiv 1 \pmod{2}$
            \item $p>0$ and $q+t=2^e-1$
            \item $p>0$, $q+t=2^e-3$, and $p \equiv 1 \pmod{4}$
        \end{itemize}  
\end{enumerate}

Symmetric conditions would also apply for elements of the form $xv_1+2yv_2$. If our conjecture is correct, then any representation of the remaining elements of $\mathcal{U}$ of the form $(2x+1)v_1+(2y+1)v_2$ must use one element of the form $xv_1+2yv_2$ and one of the form $2xv_1+yv_2$. Thus, there likely exists some full characterization of all the elements of $\mathcal{U}$ derivable through much detailed casework, similar to Theorem \ref{b=2^e}.

\subsection{Keeping track of the contribution of each generator}

In an Ulam set, any element decomposes uniquely as a sum of two previous elements, which themselves uniquely decompose. Repeating this process, we can therefore express any term canonically as a linear combination of the original generators. Recall that we defined the function $\alpha_\mathcal{U}$ as follows:

\begin{definition} \label{def:alpha}
Let $\mathcal{U}$ be an Ulam set with the generators $v_1$ and $v_2$. Define the function $\alpha_{\mathcal{U}} : \mathcal{U} \to \mathbb{Z}_{\ge 0}^2$ recursively as follows:
\begin{itemize}
    \item Set $\alpha_{\mathcal{U}}(v_1)=(1,0)$, $\alpha_{\mathcal{U}}(v_2)=(0,1)$.
    \item Any other $u \in \mathcal{U}$ can be written uniquely as $u=u_1+u_2$ for some $u_1,u_2 \in \mathcal{U}$. Then set $\alpha_{\mathcal{U}}(u)=\alpha_{\mathcal{U}}(u_1)+\alpha_{\mathcal{U}}(u_2)$.
\end{itemize}
When the context is clear, we refer to $\alpha_{\mathcal{U}}$ simply as $\alpha$.
\end{definition} 

 If we let $\alpha(u)=(\alpha_1(u),\alpha_2(u))$, it is easy to check that $u=\alpha_1(u)v_1+\alpha_2(u)v_2$, as we would expect. Thus, we can recover $u$ from $\alpha(u)$, but $\alpha(u)$ also contains more information about how $u$ is formed.

Plotting $\alpha$ in the plane leads to a very surprising observation; asymptotically, all the points seem to cluster around a straight line passing through the origin. 

\begin{conjecture} \label{alpha1/alpha2 conjecture}
Let $\mathcal{U}$ be an Ulam set with two generators and nonzero associated lattice and let $\alpha(u)=(\alpha_1(u),\alpha_2(u))$. Then there exist some number $r\in \mathbb{R}$ such that, for every $\varepsilon>0$, we have $\left|\frac{\alpha_1(u)}{\alpha_2(u)}-r\right|<\varepsilon$ for all but finitely many $u\in \mathcal{U}$.
\end{conjecture}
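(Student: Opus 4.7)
The plan is to identify a natural candidate for the limit $r$, establish boundedness of the ratios $\rho(u) := \alpha_1(u)/\alpha_2(u)$, and then upgrade boundedness to concentration. Since the associated lattice condition reads $-av_1 + bv_2 = 0$, any element asymptotically aligned with the direction $rv_1+v_2$ would have $\alpha$-values dictated by the pair $(r, 1)$; this suggests that $r$ is fixed by the ambient growth rate of $\mathcal{U}$ in the first coordinate. For the classical case in $\mathbb{Z}$ with $v_1=1, v_2=2$, this $r$ should correspond to Steinerberger's hidden signal constant.

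The first substantive step is to exploit the mediant identity: when $u = u_1+u_2$ is the unique decomposition of $u$ with $u_1, u_2$ both non-generators,
\[
\rho(u) = \frac{\alpha_1(u_1)+\alpha_1(u_2)}{\alpha_2(u_1)+\alpha_2(u_2)}
\]
lies strictly between $\rho(u_1)$ and $\rho(u_2)$, while if one summand is a generator then
\[
\rho(v_1+u_2) = \rho(u_2)+\frac{1}{\alpha_2(u_2)}, \qquad \rho(v_2+u_2) = \frac{\alpha_1(u_2)}{\alpha_2(u_2)+1},
\]
so a generator perturbs $\rho$ by $O(1/\alpha_2(u_2))$. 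Consequently, once $\alpha_2(u)$ is large, new extreme values of $\rho$ can only come from such vanishingly small generator perturbations. This observation delivers the boundedness statement — the weaker non-skewness result the authors mention — and shows that $m := \liminf_u \rho(u)$ and $M := \limsup_u \rho(u)$ are both finite.

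The core step is to prove $m = M$. The approach I would take is by contradiction via a potential function: if $m < M$, fix $c \in (m, M)$ and define a nonnegative quantity measuring weighted deviation from $c$, for example
\[
\Phi_u = \bigl(\rho(u)-c\bigr)^{2}\alpha_2(u).
\]
For a cross-sum $u = u_1+u_2$ with $\rho(u_1)<c<\rho(u_2)$, the mediant identity forces $(\rho(u)-c)^2$ to be strictly smaller than the $\alpha_2$-weighted average of $(\rho(u_i)-c)^2$, giving a strict decrease in total potential. A counting argument in the style of Finch's parity bookkeeping would then be needed to show that a positive fraction of decompositions involve such cross-pairs, so that the accumulated deficit forces the interval $[m,M]$ to contract to a single point $r$.

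The principal obstacle is precisely that counting step. Controlling what fraction of decompositions are cross-pairs requires fine combinatorial knowledge of which elements of the ambient group belong to $\mathcal{U}$ and how they decompose — the very mystery that has kept the classical Ulam sequence intractable for decades. In particular, for $v_1 = 1, v_2 = 2$ in $\mathbb{Z}$, convergence of $\rho$ would likely imply (or be implied by) the existence of Steinerberger's hidden signal, so a complete proof is almost certainly as hard as that famous open problem. A realistic intermediate target would be to prove the conjecture under additional hypotheses — for instance when $\mathcal{U}$ is regular, where periodicity should permit an explicit computation of $r$ — and then use the structural equivalence theorem of Kravitz and Steinerberger to propagate such results between settings.
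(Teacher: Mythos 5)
The statement you are trying to prove is labeled as a \emph{conjecture} in the paper: the authors offer no proof of it, and they explicitly describe it as an open problem possibly related to Steinerberger's ``hidden signal.'' The only results they actually establish are the much weaker bounds of Theorems \ref{alpha1/alpha2} and \ref{alpha1/alpha2 a=b}, which show $\alpha_2(u)\le b(2\alpha_1(u)-1)$ and $\alpha_1(u)\le a(2\alpha_2(u)-1)$, i.e.\ that $\rho(u)$ is bounded away from $0$ and $\infty$. So there is no paper proof to compare your argument against, and your own writeup candidly concedes that it is a programme rather than a proof. That concession is accurate: the ``counting step'' you flag --- showing that cross-pair decompositions occur often enough for the potential deficit to accumulate and force $m=M$ --- is exactly the part for which no technique is known, and without it the Cauchy--Schwarz subadditivity of $\Phi_u=(\rho(u)-c)^2\alpha_2(u)$ (which is correct as stated) proves nothing, since a priori all but finitely many decompositions could be non-cross-pairs.

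There is also a genuine flaw earlier in your sketch: the claim that the mediant identity plus the generator-perturbation formulas ``deliver the boundedness statement.'' The perturbation $\rho(v_1+u_2)=\rho(u_2)+1/\alpha_2(u_2)$ is only small when $\alpha_2(u_2)$ is large; if elements with small fixed $\alpha_2$ and growing $\alpha_1$ persisted in $\mathcal{U}$ (e.g.\ a chain $kv_1+v_2$), the ratio would be unbounded, and nothing in your local analysis rules this out. The paper's proof of boundedness (Theorem \ref{alpha1/alpha2}) must use the lattice relation $av_1=bv_2$ in an essential way: it shows $(1,y)\notin\mathcal{U}'$ for $y>b+1$ because $v_1+(b+1)v_2=(v_1+bv_2)+v_2=v_1+(av_1+v_2)$ has two representations, and then caps runs of $b+1$ consecutive points in any column of $\mathcal{U}'$ using $(b+1)v_2=av_1+v_2\in\mathcal{U}$. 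Your sketch never invokes the lattice, so it cannot yield even the finiteness of $m$ and $M$ that your main argument presupposes. If you want a realistic contribution here, the right targets are the ones you name at the end: proving the conjecture under a regularity hypothesis, or tightening the constants in Theorems \ref{alpha1/alpha2} and \ref{alpha1/alpha2 a=b}.
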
 

\begin{figure}[htp!]
    \centering
    \includegraphics[width=9cm]{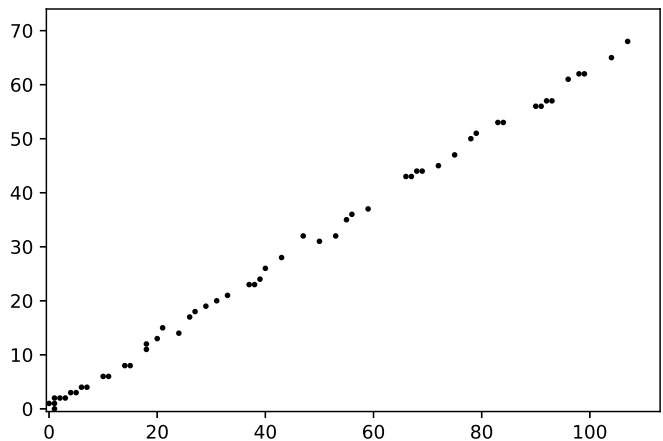}
    \caption{The image of $\alpha$ for the original Ulam sequence $\mathcal{U}(\{1,2\})$.}
\end{figure}

This is another example of a surprising property of Ulam sets and sequences about which very little is known and it may be related to other important phenomenons such as the ``hidden signal'' found by Steinerberger \cite{hidden signal} or the Rigidity Conjecture \cite{rigidity}. Although Conjecture \ref{alpha1/alpha2 conjecture} does not tell us which elements can be in the Ulam set, it tells us how each element is formed. We now prove the somewhat weaker result that the ratio $\frac{\alpha_1(u)}{\alpha_2(u)}$ cannot be arbitrarily small or large.

\begin{theorem}
\label{alpha1/alpha2}
Let the associated lattice of $\mathcal{U}$ be generated by $(-a,b)$ and let $\mathcal{U}'$ be the image of $\alpha$ in $\mathbb{Z}_{\ge 0}^2$. Then for every $(x,y) \in \mathcal{U}'$ other than $(0,1)$ and $(1,0)$, we have $y \le b(2x-1)$ and $x \le a(2y-1)$.
\end{theorem}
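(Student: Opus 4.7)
The plan is to reduce to a single inequality by symmetry: since swapping $v_1 \leftrightarrow v_2$ and $a \leftrightarrow b$ interchanges the two bounds, it suffices to prove $y \le b(2x-1)$. I would set $\phi(u) := b(2\alpha_1(u) - 1) - \alpha_2(u)$ so that the claim becomes $\phi(u) \ge 0$ for all $u \in \mathcal{U}$ with $u \ne v_2$. Since $\alpha$ is additive across Ulam decompositions, $\phi$ satisfies the key identity $\phi(u_1 + u_2) = \phi(u_1) + \phi(u_2) + b$, with base values $\phi(v_1) = b \ge 1$ and $\phi(v_2) = -b - 1$.

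I would then argue by strong induction on the Ulam order. For $u \in \mathcal{U} \setminus \{v_1, v_2\}$ with unique decomposition $u = u_1 + u_2$, if neither summand equals $v_2$, then each $\phi(u_i) \ge 0$ by the inductive hypothesis (or the base case $\phi(v_1) = b$), giving $\phi(u) \ge b > 0$. The one delicate case is when (say) $u_1 = v_2$: then $\phi(u) = \phi(u_2) - 1$, which is nonnegative precisely when $\phi(u_2) \ge 1$. If $u_2 = v_1$ this holds automatically, so the obstacle reduces to ruling out $\phi(u_2) = 0$ when $u_2 \in \mathcal{U} \setminus \{v_1, v_2\}$.

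The hard part will be deriving a contradiction from $\phi(u_2) = 0$. First I would observe that additivity together with the inductive hypothesis force $u_2$'s own decomposition to have $v_2$ as a summand (otherwise both contributions would be nonnegative and their sum would exceed $-b$): writing $u_2 = u_2' + v_2$ gives $\phi(u_2') = 1$. This yields the group-level identity $u = v_2 + u_2 = u_2' + 2v_2$. The crux is then to leverage this identity to produce an alternative decomposition of $u$ into two distinct elements of $\mathcal{U}$, contradicting uniqueness. Using the defining relation $av_1 = bv_2$, I would rewrite $u$ in several ways, such as $u = v_1 + (u - v_1)$ or $u = w + u_2'$ where $w$ has group value $2v_2$, and argue by case analysis that at least one such rewriting always produces a genuine second Ulam decomposition: either via $2v_2 \in \mathcal{U}$ (so the pair $(u_2', 2v_2)$ works) or via $u - v_1 \in \mathcal{U}$ (so the pair $(v_1, u - v_1)$ works).

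The principal technical obstacle will be verifying, using the relation $av_1 = bv_2$ together with the inductive nonnegativity of $\phi$ and its symmetric counterpart $\phi'(u) := a(2\alpha_2(u) - 1) - \alpha_1(u)$ on smaller elements, that at least one of these alternatives always exists; neither auxiliary element is in $\mathcal{U}$ automatically, and which one succeeds depends on the specific values of $a$ and $b$. Once this case analysis is complete, the symmetric bound $x \le a(2y-1)$ follows by interchanging the roles of $v_1, v_2$ and $a, b$.
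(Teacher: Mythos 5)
Your setup is sound and is essentially a reformulation of the paper's induction: the identity $\phi(u_1+u_2)=\phi(u_1)+\phi(u_2)+b$ is exactly the observation that if neither summand is $v_2$ then $y\le b(2x-2)$, and you correctly isolate the hard case, where the representation of $u$ is forced to use $v_2$ and the other summand sits on the boundary $\phi=0$. But the mechanism you propose for the contradiction does not work. Neither of your candidate second representations is available in general: $2v_2$ need not be in $\mathcal{U}$ (for the lattice generated by $(-2,5)$, embedded in $\mathbb{Z}$ as $v_1=5$, $v_2=2$, the element $2v_2=4$ has only the forbidden representation $2+2$ and is not in the set), and there is no reason for $u-v_1$ to be in $\mathcal{U}$ either. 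Moreover, unwinding only two steps (to $u=u_2'+2v_2$ with $\phi(u_2')=1$) is not enough: for $b>1$ the element $u_2'$ still has $\phi<b$, so its own decomposition is again forced to use $v_2$, and stopping there gives you nothing to contradict.

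The missing ingredient is the specific element $(b+1)v_2=av_1+v_2$, which the paper checks directly is in $\mathcal{U}$ (along with $v_1+jv_2$ for $0\le j\le b$, and the fact that $v_2$ is the only element with $\alpha_1=0$) before starting the induction. One then iterates your ``must use $v_2$'' step all the way: a hypothetical $u$ with $\phi(u)=-1$ yields a chain $u,\,u-v_2,\,u-2v_2,\dots$ of elements of $\mathcal{U}$ with $\phi$-values $-1,0,1,\dots,b$, i.e., $b+2$ consecutive group elements $xv_1+(y-j)v_2$ for $0\le j\le b+1$. The top element then has the two distinct representations $\bigl(xv_1+(y-1)v_2\bigr)+v_2$ and $\bigl(xv_1+(y-b-1)v_2\bigr)+(b+1)v_2$, contradicting uniqueness. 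So your skeleton is right, but the case analysis around $2v_2$ and $u-v_1$ should be replaced by (i) the initial computation that $(b+1)v_2\in\mathcal{U}$ and (ii) the full unwinding to a run of $b+2$ consecutive elements in the column.
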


\begin{proof}
Note that the only element with $x$-coordinate $0$ is $(0,1)$ while the elements $(1,0), \ldots, (1,b)$ are in $\mathcal{U}'$. However, $(1,b+1)$ is not included because $v_1+(b+1)v_2=(v_1+bv_2)+v_2=v_1+(av_1+v_2)$. For $y>b+1$, the only way to express $(1,y)$ as a sum of elements of $\mathcal{U}'$ is to use $(0,1)$, so we would need $(1,y-1)$ to also be in $\mathcal{U}'$. Thus, for $y>b+1$, $(1,y)$ is not in $\mathcal{U}'$.

Now let's consider the other values of $x$. Since $(av_1+v_2)=(b+1)v_2 \in \mathcal{U}$, we cannot have more than $b+1$ consecutive elements of $\mathcal{U}'$ in a vertical line. Indeed, if $(x,y), (x,y+1), \ldots, (x,y+b)$ are all in $\mathcal{U}$, then $(x,y+b+1)$ cannot be in $\mathcal{U}$ because $$xv_1+(y+b+1)v_2=(xv_1+(y+b)v_2)+v_2=(xv_1+yv_2)+(b+1)v_2.$$

We will now prove the result by inducting on $x$. The cases $x=0$ and $x=1$ have already been treated, so suppose $x\ge 2$ and that the result holds for all smaller $x$. 

Assume $(x,y)\in \mathcal{U}'$ has the representation $(x,y)=(x_1,y_1)+(x_2,y_2)$, where $x_1\le x_2$. If $x_1\ge 1$, then we have $y_1\le b(2x_1-1)$ and $y_2\le b(2x_2-1)$ by our inductive hypothesis, so $$y=y_1+y_2\le b(2x-2).$$ This means that if $y>b(2x-2)$, then $x_1=0$, so $y_1=1$ and $(x,y-1)=(x_2,y_2)\in \mathcal{U}'$. Hence, if $y=b(2x-2)+k$, then $(x,y-1), (x,y-2), \ldots, (x,y-k)$ are also in $\mathcal{U}$. If $k\ge b+1$, then we have more than $b+1$ consecutive elements, which is impossible as we saw earlier. Thus, $k \le b$ and $y \le b(2x-1)$ except for when $(x,y)=(0,1)$.

By symmetry, we also have that $x \le a(2y-1)$ except for when $(x,y)=(1,0)$, so the proof is complete.
\end{proof}

This result gives us the bounds $\frac{y}{x} \le 2b$ and $\frac{x}{y} \le 2a$, which holds for all but finitely many elements. 

We now focus on the case where the associated lattice is generated by $(-n,n)$. This case is particularly interesting because, if Conjecture \ref{alpha1/alpha2 conjecture} is true, then $r=1$ by symmetry. We note that this is the only case where there seems to be a simple expression for the exact value of $r$. Moreover, we can take advantage of the symmetry to improve the bound of the previous theorem to $\frac{y}{x} < n+1$.

\begin{theorem}
\label{alpha1/alpha2 a=b}
When $\mathcal{U}$ has associated lattice generated by $(-n,n)$, then for any $(x,y)\in \mathcal{U}'$ other than $(0,1)$ and $(1,0)$, we have $y < (n+1)x$ and $x<(n+1)y$.
\end{theorem}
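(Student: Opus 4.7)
The key new ingredient, compared to Theorem~\ref{alpha1/alpha2}, is the swap symmetry $v_1 \leftrightarrow v_2$ available in the symmetric case $a=b=n$: since the lattice $L = \langle (-n,n) \rangle$ is invariant under coordinate swap, this map is an automorphism of $G$ preserving the initial set, hence an automorphism of $\mathcal{U}$. It induces the symmetry $(x,y) \in \mathcal{U}' \Leftrightarrow (y,x) \in \mathcal{U}'$, which allows the bound $x<(n+1)y$ to be deduced from $y<(n+1)x$ simply by applying the result to $(y,x)$.

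To establish $y<(n+1)x$, I proceed by induction on $x$, with the case $x=0$ trivial since $(0,1)$ is the only element of $\mathcal{U}'$ with first coordinate zero. For $x \ge 1$, suppose for contradiction that $(x,y) \in \mathcal{U}'$ is a non-generator with $y \ge (n+1)x$, and examine its unique decomposition $(x,y) = (x_1,y_1) + (x_2,y_2)$. If neither summand is a generator, both have $x_i \ge 1$ with $x_i < x$, so the inductive hypothesis yields $y_1+y_2 < (n+1)(x_1+x_2) = (n+1)x$, a contradiction. If $(1,0)$ is a summand, the companion is $(x-1,y)$: for $x = 1$ this forces $y = 1$, contradicting $y \ge n+1$; for $x \ge 2$ the inductive hypothesis gives $y<(n+1)(x-1)$, again contradicting $y \ge (n+1)x$. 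The only remaining option is $(x,y) = (0,1) + (x,y-1)$, which forces $(x,y-1) \in \mathcal{U}'$. Iterating this argument while $y' \ge (n+1)x$ yields $(x,(n+1)x) \in \mathcal{U}'$.

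To close the argument, observe that $(x,(n+1)x) - ((n+1)x,x) = x(-n,n) \in L$, so these two pairs, which are distinct for $x \ge 1$, represent the same element of $G$. Since $\alpha : \mathcal{U} \to \mathbb{Z}_{\ge 0}^2$ is injective (one recovers $u$ from $\alpha(u)$ via $u = \alpha_1(u)v_1 + \alpha_2(u)v_2$), at most one of them can appear in $\mathcal{U}'$; but swap symmetry forces both to appear, yielding the desired contradiction. The main obstacle, and the reason the bound improves from Theorem~\ref{alpha1/alpha2}'s $y \le n(2x-1)$ to the tighter $y<(n+1)x$, is recognizing and exploiting this collision --- two canonically distinct pairs that coincide as group elements via the lattice relation $nv_1 = nv_2$.
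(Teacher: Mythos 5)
Your proof is correct and takes essentially the same route as the paper's: both arguments observe that sums of points below the line $y=(n+1)x$ stay below it, force a putative counterexample to decompose through $(0,1)$ and descend to a point exactly on the line, and then derive a contradiction from the fact that $(x,(n+1)x)$ and its swap $((n+1)x,x)$ are distinct pairs representing the same group element (this collision is precisely the paper's Lemma \ref{x=y (mod n)}, applied with $y-x=nx\equiv 0 \pmod{n}$). The only difference is bookkeeping --- you run an induction on $x$ with an inner descent on $y$, while the paper takes a lexicographically minimal counterexample --- so no further comparison is needed.
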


We first require a preparatory lemma.

\begin{lemma}
\label{x=y (mod n)}
Let $\mathcal{U}$ have associated lattice generated by $(-n,n)$. If $(x,y)\in \mathcal{U}'$ and $x\equiv y \pmod{n}$, then $x=y$.
\end{lemma}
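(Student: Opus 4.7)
The plan is to exploit the symmetry of the Ulam set under swapping the two generators, which itself arises from the symmetry of the associated lattice $L = \langle (-n,n)\rangle$ under the coordinate swap $(a,b)\mapsto(b,a)$. By the structural equivalence results from \cite{noah} cited earlier in this section, the Ulam set obtained by interchanging $v_1$ and $v_2$ has the same associated lattice as $\mathcal{U}$ and is therefore structurally equivalent to it. In particular, $(x,y)\in \mathcal{U}'$ if and only if $(y,x)\in \mathcal{U}'$.

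Next I would record the easy fact that $\alpha$ is injective: a straightforward induction on the recursive definition shows that $u = \alpha_1(u)v_1 + \alpha_2(u)v_2$ for every $u\in \mathcal{U}$, so $u$ is uniquely recoverable from $\alpha(u)$. Equivalently, each point $(x,y)\in \mathcal{U}'$ corresponds to the unique element $xv_1 + yv_2\in \mathcal{U}$.

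The heart of the proof is then a one-line computation in the ambient group. Because $L$ contains $(-n,n)$, we have $nv_1 = nv_2$. Hence if $(x,y)\in \mathcal{U}'$ with $x\equiv y\pmod n$, writing $x-y = kn$ yields
$$xv_1 + yv_2 \;=\; yv_1 + yv_2 + k(nv_1) \;=\; yv_1 + yv_2 + k(nv_2) \;=\; yv_1 + xv_2$$
as group elements. But by the symmetry above, both $(x,y)$ and $(y,x)$ lie in $\mathcal{U}'$, corresponding respectively to the unique elements $xv_1+yv_2$ and $yv_1+xv_2$ of $\mathcal{U}$. Since these group elements are equal, the injectivity of $\alpha$ forces $(x,y) = (y,x)$, that is, $x = y$.

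The main (and essentially only) obstacle is making precise what the symmetry of $\mathcal{U}$ under swapping $v_1$ and $v_2$ gives us, but this should follow immediately from the structural equivalence framework already set up at the beginning of Section \ref{ZxZn}, so the argument is expected to be very short.
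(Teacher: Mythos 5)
Your proposal is correct and takes essentially the same route as the paper: symmetry of the lattice under swapping coordinates gives $(y,x)\in\mathcal{U}'$, the relation $nv_1=nv_2$ identifies $xv_1+yv_2$ with $yv_1+xv_2$ in the ambient group, and applying $\alpha$ to this single element forces $(x,y)=(y,x)$. One small wording note: the last step uses that $\alpha$ is single-valued (a well-defined function), not that it is injective.
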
 

\begin{proof} 
By symmetry, we know that $(y,x)$ is in $\mathcal{U}'$. Thus, there exists some $u_1, u_2 \in \mathcal{U}$ such that $\alpha(u_1)=(x,y)$ and $\alpha(u_2)=(y,x)$. But then we have $$u_1=xv_1+yv_2=yv_1+xv_2+(x-y)(v_1-v_2)=yv_1+xv_2=u_2$$ since $n|(x-y)$. This means that $(x,y)=\alpha(u_1)=\alpha(u_2)=(y,x)$; hence $x=y$.
\end{proof}

\begin{proof}[Proof of Theorem \ref{alpha1/alpha2 a=b}]
Suppose, for the sake of contradiction, that some $u \in \mathcal{U}$ other than $v_2$ is mapped by $\alpha$ to $(x,y)$ with $y\ge (n+1)x$. Take such a $u$ which lexicographically minimizes $(x,y)$.

Since the sum of two vectors below the line $y=(n+1)x$ stays below the line, the representation of $u$ must use another element that is mapped above the line. Note that the only choice is $v_2$, since $u$ corresponds to the minimal such $(x,y)$. This means that $u-v_2$ is in $\mathcal{U}$ and is mapped to $(x,y-1)$. If $y-1\ge (n+1)x$, then this new element contradicts the minimality of $u$. Hence we must have $y = (n+1)x$. But then $y-x=nx$, so we can apply Lemma \ref{x=y (mod n)} to find that $nx=y-x=0$. Thus $y=x=0$, which is impossible.

A symmetric argument shows that $x<(n+1)y$ for $(x,y)\neq (1,0)$.
\end{proof}

\subsection{Finiteness} \label{finiteness}

Although it is simple to prove, in $\mathbb{Z}^d$, that Ulam sets are always infinite (see \cite{noah}), the situation is much more complicated in $\mathbb{Z}\times (\mathbb{Z}/n\mathbb{Z})$. In this case, we can indeed have Ulam sets with finitely many elements. A simple example is obtained by taking the initial set $S=\{(1,0),(1,1),\ldots, (1,n-1)\}$, when $n\ge 5$. In this case, it is clear that no other elements outside of the initial set can be added. A more subtle example with only three generators is obtained with $S=\{(1,0),(1,1),(2,5)\}$ in $\mathbb{Z} \times (\mathbb{Z}/8\mathbb{Z})$. In this example, several other elements will also be included, but if we compute enough terms, we can see that the process must terminate eventually and that there is no element with $x$-coordinate greater than $51$ (see Figure \ref{fig:finite-set}).

This example with three generators suggests trying to find one with only two generators. Surprisingly, however, we found no finite Ulam set with two generators despite checking, with a computer, all associated lattices generated by $(-a,b)$ for $a,b<200$ and up to elements with $x=1000$.

\begin{conjecture} \label{conj:infinite}
Every Ulam set in a commutative group with two generators is infinite.
\end{conjecture}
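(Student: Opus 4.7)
The plan is to argue by contradiction: suppose $\mathcal{U}$ is finite. Using the universality discussion at the start of Section \ref{ZxZn} — specifically that every two-generator Ulam set in a commutative group is determined up to structural equivalence by its associated lattice, generated by some $(-a,b)$ with $\gcd(a,b)=d$ — I would embed $\mathcal{U}$ in $\mathbb{Z}\times(\mathbb{Z}/d\mathbb{Z})$ with size function given by the first coordinate. Let $x_{\max}$ denote the largest first coordinate appearing in $\mathcal{U}$. Theorem \ref{5 or more in last column} then furnishes at least five ``top-row'' elements $(x_{\max},y_1),\ldots,(x_{\max},y_k)$ with $k\ge 5$.

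The core of the argument would try to exploit the rigidity of having only two generators. For each top-row element $u$ and each generator $v_j$ ($j=1,2$), the sum $u+v_j$ has first coordinate strictly larger than $x_{\max}$ and hence cannot lie in $\mathcal{U}$; it must therefore admit either zero or at least two representations as a sum of two distinct earlier elements. Since $u+v_j$ is always one such representation (provided $u\ne v_j$), there must exist a second, independent representation $u+v_j = u' + u''$ with $\{u',u''\}\ne\{u,v_j\}$. Across the five (or more) top-row elements and both generators, this yields at least $2k\ge 10$ forced ``second representations'' to analyze. I would then combine these constraints with the bounds from Theorem \ref{alpha1/alpha2} (namely $\alpha_2\le b(2\alpha_1-1)$ and $\alpha_1\le a(2\alpha_2-1)$) to rule out all possible configurations of the top-row elements.

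The main obstacle I foresee is that, although the count of ten relations looks abundant, the one-dimensional lattice can cause many seemingly distinct sums to coincide in the group, so several distinct second-representation demands may be satisfied by a single pair. A natural refinement is therefore to try to \emph{sharpen} Theorem \ref{5 or more in last column} in the two-generator case — for instance, to show that at most four top-row elements are possible — which would yield an immediate contradiction. A complementary angle is to work on the $\alpha$-image $\mathcal{U}'\subset\mathbb{Z}_{\ge 0}^2$ directly: if $\mathcal{U}'$ is finite and $(x_0,y_0)$ achieves maximum size, try to force both $(x_0+1,y_0)$ and $(x_0,y_0+1)$ into $\mathcal{U}'$ by analogous ``forced second representation'' reasoning, contradicting maximality. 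Since computer search finds no finite two-generator Ulam set for $a,b<200$, I expect the genuine difficulty lies in the arithmetic coincidences caused by lattice wrap-around in the $y$-coordinate, and resolving these will likely require a finer case analysis than the uniform bounds of Theorem \ref{alpha1/alpha2} can supply.
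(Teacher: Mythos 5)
The statement you are attempting is stated in the paper as Conjecture \ref{conj:infinite}, and the paper does \emph{not} prove it: the authors offer only computational evidence (no finite two-generator Ulam set found for lattices $(-a,b)$ with $a,b<200$) together with the partial result that the conjecture holds whenever $\gcd(a,b)<5$, which follows from Theorem \ref{5 or more in last column} exactly as in your first paragraph (this is Corollary \ref{infinite234}). So there is no paper proof to compare against, and your proposal is, by your own admission, a research plan rather than a proof.

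The concrete gap is in the core step. Your ``forced second representation'' count is precisely the mechanism already used to prove Theorem \ref{5 or more in last column}, and it genuinely stops at five: the bound of $5$ is tight, as witnessed by the finite Ulam set $\mathcal{U}(\{(1,0),(1,1),\ldots,(1,n-1)\})$ for $n\ge 5$ and by the three-generator example $\{(1,0),(1,1),(2,5)\}$ in $\mathbb{Z}\times(\mathbb{Z}/8\mathbb{Z})$. Hence any sharpening to ``at most four top-row elements'' must use the two-generator hypothesis in an essential way, and your proposal does not identify what that extra input would be; the bounds of Theorem \ref{alpha1/alpha2} only confine the $\alpha$-image to a cone in $\mathbb{Z}_{\ge 0}^2$ and are perfectly compatible with finiteness. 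Your complementary idea of forcing $(x_0+1,y_0)$ or $(x_0,y_0+1)$ into the $\alpha$-image also fails as stated, because $u+v_j$ lands in $\mathcal{U}$ only if its representation is unique, and the whole difficulty is that lattice wrap-around in the $(\mathbb{Z}/n\mathbb{Z})$-coordinate can always supply a second representation --- which is exactly what happens in the finite examples above. In short, the statement remains open, and your sketch does not close it.
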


Note that we can restrict our attention to $\mathbb{Z} \times (\mathbb{Z}/n\mathbb{Z})$ since we know that every Ulam set (in a commutative group) with two linearly dependent generators can be embedded in $\mathbb{Z}\times (\mathbb{Z}/n\mathbb{Z})$. Note also that this conjecture does not hold in the case of $\mathcal{V}$-sets; for example, the $\mathcal{V}$-set with associated lattice generated by $(-3,3)$ contains only five elements.

We know, however, that when $\gcd(a,b)=1$, the Ulam set with associated lattice generated by $(-a,b)$ can be embedded in $\mathbb{Z}$, so it must be infinite. The following theorem allows us to improve this to include all cases where $\gcd(a,b)<5$.

\begin{theorem}
\label{5 or more in last column}
Let $\mathcal{U}$ be a finite Ulam set in $\mathbb{Z} \times (\mathbb{Z}/n\mathbb{Z})$ and let $x_{\max}$ be the greatest $x$-coordinate of elements of $\mathcal{U}$. Then $\mathcal{U}$ contains at least $5$ elements of the form $(x_{\max},y)$.
\end{theorem}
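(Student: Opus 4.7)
The plan is to set $C = \{y \in \mathbb{Z}/n\mathbb{Z} : (x_{\max}, y) \in \mathcal{U}\}$ (the ``last column'') and reduce the theorem to a combinatorial constraint on pair sums in $C$. The key structural observation is that if $(a, y_1) + (b, y_2) = (2x_{\max}, y)$ with both summands in $\mathcal{U}$, then $a, b \le x_{\max}$ by maximality, and $a + b = 2x_{\max}$ forces $a = b = x_{\max}$; hence every representation of $(2x_{\max}, y)$ uses two distinct elements of the last column. Since $\mathcal{U}$ is finite, no point $(2x_{\max}, y)$ lies in $\mathcal{U}$, so by the Ulam construction every such point must admit either $0$ or $\ge 2$ representations. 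This translates into the following purely combinatorial condition on $C$: for every unordered pair $\{y_1, y_2\} \subset C$ with $y_1 \ne y_2$, there exists another unordered pair $\{y_3, y_4\} \subset C$ with $y_3 \ne y_4$, $\{y_1, y_2\} \ne \{y_3, y_4\}$, and $y_1 + y_2 \equiv y_3 + y_4 \pmod n$.

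Using this condition, I would rule out $|C| \in \{2, 3, 4\}$. The cases $|C| = 2, 3$ are immediate: for $|C| = 2$ the unique pair has nothing to match it, and for $|C| = 3$ any two distinct pairs share an element, so equality of their sums would force equality of the remaining two elements. For $|C| = 4$, the same ``shared element'' obstruction shows that a matching pair must be disjoint; since each of the six pairs has exactly one disjoint partner, the three disjoint pair-pairings must all coincide in sum, giving the system
\[
y_1 + y_2 \equiv y_3 + y_4, \qquad y_1 + y_3 \equiv y_2 + y_4, \qquad y_1 + y_4 \equiv y_2 + y_3 \pmod n.
\]
Adding these equations in pairs yields $2(y_i - y_j) \equiv 0 \pmod n$ for all $i \ne j$; but $\mathbb{Z}/n\mathbb{Z}$ contains at most one nonzero element of order dividing $2$, so the four $y_i$ cannot be pairwise distinct. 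The degenerate case $|C| = 1$ can be dispatched separately: if $C = \{y_0\}$ and $x^*$ is the largest $x$-coordinate of $\mathcal{U}$ strictly less than $x_{\max}$, then for any $(x^*, y') \in \mathcal{U}$ the point $(x_{\max} + x^*, y_0 + y')$ has the unique representation $(x_{\max}, y_0) + (x^*, y')$ (since no elements of $\mathcal{U}$ have $x$-coordinate in the open interval $(x^*, x_{\max})$), so it would be added to $\mathcal{U}$, contradicting the maximality of $x_{\max}$ unless $\mathcal{U}$ is a singleton.

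The main obstacle is the $|C| = 4$ case, where one must carefully verify that the shared-element obstruction forces the full system of three disjoint-pair equations before extracting the arithmetic contradiction in the cyclic group; the other cases, as well as the initial reduction, are short once the ``$2x_{\max}$ forces both summands into $C$'' lemma is in hand.
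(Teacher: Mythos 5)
Your proposal is correct and follows essentially the same route as the paper: the same reduction to pair sums in the last column, the same casework on $1$, $2$--$3$, and $4$ elements, and the same final contradiction from $2(y_i-y_j)\equiv 0 \pmod n$ forcing four distinct residues into a two-element coset. Your explicit statement of the ``both summands must lie in the last column'' lemma and your handling of the singleton edge case are slightly more careful than the paper's, but the argument is the same.
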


\begin{proof}
If $\mathcal{U}$ contains a single element of the form $(x_{\max},y)$, then the sum of this element with an element of $\mathcal{U}$ with the second greatest $x$-coordinate clearly has a unique representation, contradicting the maximality of $x_{\max}$.

If $\mathcal{U}$ contains two or three elements of this form, then the sum of any two of them again has a unique representation and greater $x$-coordinate.

Now suppose that $\mathcal{U}$ contains four elements of this form: $u_1$, $u_2$, $u_3$, and $u_4$. Then the only possible second representation of $u_1+u_2$ is $u_3+u_4$, so $u_1+u_2=u_3+u_4$. Similarly, $u_1+u_3=u_2+u_4$ and $u_1+u_4=u_2+u_3$. Thus, we have the following system of equations:
$$\left\{
  \begin{array}{l}
    u_1+u_2=u_3+u_4\\
    u_1+u_3=u_2+u_4\\
    u_1+u_4=u_2+u_3
  \end{array}
\right.$$
Subtracting the first two equations yields $u_2-u_3=u_3-u_2$, so $2u_2=2u_3$. Similarly we can obtain $2u_i=2u_j$ for all pairs $i,j\in\{1,2,3,4\}$. If $n$ is odd, then $u_i=u_j$ (because then $2$ if invertible modulo $n$), which is impossible. If $n$ is even then it only forces $u_i\in \{u_j,u_j+(0,\frac{n}{2})\}$. This means, however, that for every $i\in\{1,2,3,4\}$, $u_i$ is either equal to $u_1$ or $u_1+(0,\frac{n}{2})$. Since the four $u_i$'s are different and there are only two choices, this is also impossible. Hence, there must be at least $5$ elements of the form $(x_{\max},y)$.
\end{proof}

\begin{corollary} \label{infinite234}
All Ulam sets in $\mathbb{Z}\times (\mathbb{Z}/2\mathbb{Z})$, $\mathbb{Z}\times (\mathbb{Z}/3\mathbb{Z})$ and $\mathbb{Z}\times (\mathbb{Z}/4\mathbb{Z})$ are infinite.
\end{corollary}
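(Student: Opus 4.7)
The plan is to derive this as an essentially immediate consequence of Theorem \ref{5 or more in last column}. Suppose, for contradiction, that there is a finite Ulam set $\mathcal{U}$ in $\mathbb{Z} \times (\mathbb{Z}/n\mathbb{Z})$ with $n \in \{2,3,4\}$. Let $x_{\max}$ denote the greatest $x$-coordinate appearing in $\mathcal{U}$, which is well-defined since $\mathcal{U}$ is finite.

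By Theorem \ref{5 or more in last column}, the set $\mathcal{U}$ must contain at least $5$ elements of the form $(x_{\max},y)$. These elements differ only in their second coordinate, so they correspond to at least $5$ distinct values of $y \in \mathbb{Z}/n\mathbb{Z}$. However, for $n \in \{2,3,4\}$, the group $\mathbb{Z}/n\mathbb{Z}$ has only $n \le 4$ elements, so one cannot fit $5$ distinct values of $y$. This contradiction shows that no Ulam set in $\mathbb{Z}\times (\mathbb{Z}/n\mathbb{Z})$ for $n \in \{2,3,4\}$ can be finite.

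There is no real obstacle here since Theorem \ref{5 or more in last column} does all the work; the corollary is just a counting observation about the size of $\mathbb{Z}/n\mathbb{Z}$. The proof should be only a couple of sentences.
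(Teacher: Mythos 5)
Your proposal is correct and matches the paper's own argument exactly: both derive the corollary by noting that Theorem \ref{5 or more in last column} would force at least $5$ distinct values of $y$ in the final column, which is impossible when $|\mathbb{Z}/n\mathbb{Z}| = n \le 4$. No issues.
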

\begin{proof}
Since there are less than $5$ possible values of $y$, the condition of Theorem \ref{5 or more in last column} cannot be fulfilled, so the Ulam sets must be infinite.
\end{proof}

\section{Higher-dimensional $\mathcal{V}$-sets} \label{sec:V-sets}

In this section, we study $\mathcal{V}$-sets, the variant of Ulam sets where we don't require the summands in the representations to be distinct. These sets share many properties with Ulam sets. In particular, Conjecture \ref{alpha1/alpha2 conjecture} appears to also hold in the case of $\mathcal{V}$-sets. Moreover, the properties pertaining to associated lattices, discussed at the beginning of Section \ref{ZxZn}, still apply to $\mathcal{V}$-sets.

The case of $\mathcal{V}$-sequences (in $\mathbb{Z}$) have already been studied by Kuca \cite{V-sequences}, so we focus on $\mathcal{V}$-sets in $\mathbb{Z}^2$.

\subsection{The column phenomenon}

In Section \ref{free-col}, we extended the column phenomenon first observed by Kravitz and Steinerberger \cite{noah} to a noncommutative setting. We will now prove a generalization of this phenomenon in commutative settings that will allow us to extend it to $\mathcal{V}$-sets.

\begin{definition} If $S$ is a subset of $\mathbb{Z}_{\ge 0}^2$, we say that the $x$-column is eventually periodic with period $p$ when, for a sufficiently large $y$, $(x,y)\in S$ if and only if $(x,y+p)\in S$.
\end{definition}

Kravitz and Steinerberger proved that if an Ulam set in $\mathbb{Z}^2$ has a single generator lying on the first column ($x=0$), then all of the columns are eventually periodic.

\begin{figure}[htp!]
    \centering
    \includegraphics[width=10cm]{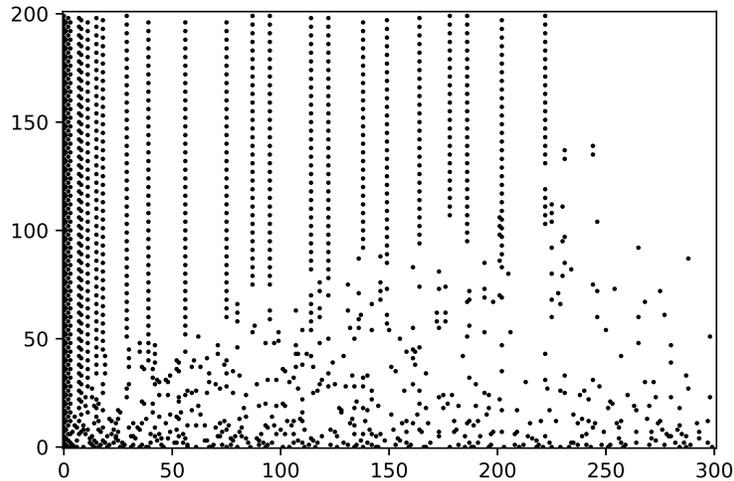}
    \caption{The set $\mathcal{V}(\{(0,1),(1,0),(6,0)\})$. Despite some chaotic behavior near the $x$-axis, regular columns arise for sufficiently large $y$.}
\end{figure}

For $\mathcal{V}$-sets, however, a single generator $(0,a)$ in the first column will generate a full sequence of points in this column: $(0,2a)$, $(0,3a)$, $(0,5a)$, $(0,7a)$, $(0,9a)$, and so on. The periodic behavior in the first column suggests the following generalization:

\begin{theorem} 
\label{col phenomenon}
Let $\mathcal{S}$ either be an Ulam set or a $\mathcal{V}$-set in $\mathbb{Z}^2$ for which the column $x=0$ is eventually periodic. Then all the columns of $\mathcal{S}$ are eventually periodic.
\end{theorem}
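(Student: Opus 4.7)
The plan is to proceed by strong induction on the column index $x$, with the hypothesis on the $x=0$ column serving as the base case. For the inductive step, I fix $x \ge 1$ and assume that columns $0, 1, \ldots, x-1$ are each eventually periodic; let $P$ be a common period. Setting $f_x(y) = \mathbf{1}[(x,y) \in \mathcal{S}]$ and $S_k = \{y : (k,y) \in \mathcal{S}\}$, and writing $S_0 = S_0^{\mathrm{fin}} \cup S_0^{\mathrm{per}}$ where $S_0^{\mathrm{per}}$ is eventually a union of arithmetic progressions of common difference $P_0$ indexed by residue classes $J_0 \subseteq \mathbb{Z}/P_0\mathbb{Z}$, I split the count of representations of $(x,y)$ as a sum of two elements of $\mathcal{S}$ (distinct in the Ulam case, possibly equal in the $\mathcal{V}$-set case) as $r(y) = A(y) + B(y)$, where $A(y)$ counts representations $(x_1, y_1) + (x_2, y - y_1)$ with $0 < x_1 \le x_2 < x$ (determined by the inductive hypothesis, including the diagonal $\mathcal{V}$-set case $x_1 = x_2 = x/2$), and $B(y) = \sum_{y_1 \in S_0} f_x(y - y_1)$ counts those of the form $(0, y_1) + (x, y - y_1)$. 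Then $f_x(y) = 1$ if and only if $r(y) = 1$.

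For $A(y)$, I would split each earlier column into its finite and infinite-periodic parts; on each residue class modulo a combined period, $A(y)$ is either eventually periodic and bounded or eventually $\ge 2$ (the latter since any unbounded ``periodic--periodic'' piece grows linearly, forcing $f_x(y) = 0$ identically on that class). For $B(y)$, the contribution from $S_0^{\mathrm{fin}}$ is a bounded window-sum of $f_x$ values, while on each residue class $s$ of $y$ mod $P_0$, the contribution from $S_0^{\mathrm{per}}$ is a sum over $c \in J_0$ of the running count of $1$'s of $f_x$ in residue class $s - c$. The key self-limiting lemma handles the potentially unbounded-depth memory: if $f_x(y_3) = f_x(y_4) = 1$ with $y_3 < y_4$ and $y_4 - y_3 = k P_0$, then for any $y_0 \in S_0^{\mathrm{per}}$ in residue class $c \in J_0$, the pairs $(0, y_0) + (x, y_4)$ and $(0, y_0 + k P_0) + (x, y_3)$ give two distinct representations of $(x, y_0 + y_4)$, forcing $f_x(y_0 + y_4) = 0$. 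Writing $I_\infty \subseteq \mathbb{Z}/P_0\mathbb{Z}$ for the set of residue classes on which $f_x$ has infinitely many $1$'s, this yields $(I_\infty + J_0) \cap I_\infty = \emptyset$ in $\mathbb{Z}/P_0\mathbb{Z}$.

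Consequently, for each $s \in I_\infty$, the running counts entering $B(y)$ along $y \equiv s \pmod{P_0}$ come only from classes $s - c \notin I_\infty$ and so stabilize to constants, while classes outside $I_\infty$ support only finitely many $1$'s. On each class $s \in I_\infty$, $f_x$ therefore satisfies a finite-memory recurrence (of memory $\max S_0^{\mathrm{fin}}$) driven by an eventually periodic source, which by pigeonhole has an eventually periodic solution; on each class outside $I_\infty$, $f_x$ is eventually $0$. Reassembling across all residue classes modulo $\operatorname{lcm}(P, P_0)$ gives the eventual periodicity of column $x$, closing the induction. The hard part will be adapting the finite-state argument of Kravitz--Steinerberger to the $\mathcal{V}$-set setting, where column $0$ typically has infinite support and a naive finite-memory recurrence fails; the self-limiting lemma above, exploiting the ``two distinct representations'' trick within column $x$, is the essential new ingredient. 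A secondary subtlety is that $A(y)$ itself can be unbounded when two earlier columns both have infinite periodic support, requiring the ``eventually $\ge 2$'' dichotomy in place of a direct periodicity statement for $A$.
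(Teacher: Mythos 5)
Your proposal is correct and follows essentially the same route as the paper's proof: induction on $x$, a split of the representations of $(x,y)$ into those internal to columns $1,\dots,x-1$ (your $A(y)$, the paper's $b_y$, with the same ``shift one periodic summand by $P$'' dichotomy), those using the periodic part of column $0$ (your $B^{\mathrm{per}}$, the paper's passage from $b_y$ to $c_y$), and those using its finite transient part, finished by the same finite-memory recurrence and pigeonhole argument. Your self-limiting lemma $(I_\infty+J_0)\cap I_\infty=\emptyset$ is a repackaging of the same two-distinct-representations trick the paper applies when it argues that two or more elements of column $x$ lying in the classes $R-S$, $S\in C$, force $c_y=2_+$ for all large $y\in R$.
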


\begin{proof} 
We proceed by inducting on $x$. Fix some $x>0$ and suppose that all previous columns are eventually periodic. Let $P$ be the least common multiple of their periods. 

We will need to count the number of representations of $(x,y)$ as sums of previous elements. Moreover, since the exact number of representations does not matter once there is more than one, we will encode the number of representations with a symbol in $\{0,1,2_+\}$ (as in Theorem \ref{thm: free-periodicity}), where the symbols $0$ and $1$ mean there are $0$ and $1$ representations, respectively, and $2_+$ means there are two or more representations. Finally, when we count representations, we can either require the summands to be distinct or not require this restriction, depending on whether we are working with an Ulam set or a $\mathcal{V}$-set. The proof works equally well in both settings.

We first ignore the first column and define $b_y\in \{0,1,2_+\}$ to be the number of representations of $(x,y)$ as a sum of elements from the other columns. We show that $b_y$ is eventually periodic with period $P$.

Indeed, suppose that we have a representation $(x,y)=(x_1,y_1)+(x_2,y_2)$, with $y_1\le y_2$. It is clear that, for sufficiently large $y$, the point $(x_2,y_2)$ must come from the periodic section of its column. If $(x_1,y_1)$ also does, then we have another representation $(x,y)=(x_1,y_1-P)+(x_2,y_2+P)$, so $b_y=2_+$. Similarly $b_{y+P}=2_+$ because $(x,y+P)=(x_1,y_1)+(x_2,y_2+P)$. Furthermore, if $(x,y+P)$ has such a representation, then $(x,y)$ also does and we again have $b_y=b_{y+P}=2_+$.

Now, suppose all representations of both $(x,y)$ and $(x,y+p)$ use an element from the non-periodic transient phase of its column. Then any representation $(x,y)=(x_1,y_1)+(x_2,y_2)$ yields the representation $(x,y+P)=(x_1,y_1)+(x_2,y_2+P)$. Similarly, a representation of $(x, y+P)$ yields one for $(x,y)$. We therefore have a bijection between representations of $(x,y)$ and $(x,y+P)$, which shows that $b_y=b_{y+P}$. 

We now need to split the first column into its finite transient phase and its infinite periodic section. Since its minimal period divides $P$, for any congruence class modulo $P$, either $(0,y)\in \mathcal{S}$ for all sufficiently large $y$ in this equivalence class, or $(0,y)\not\in \mathcal{S}$ for all sufficiently large $y$ in this equivalence class. Let $C$ be the set of congruence classes modulo $P$ that contain the values $y$ for which $(0,y)$ is eventually always in $\mathcal{S}$, and let $T$ be the finite set of elements of $\mathcal{S}$ not in one of these classes.

Let $c_y \in \{0,1,2_+\}$ be the number of representations of $(x,y)$, where we now also take into account the infinite periodic section of the first column.

For a congruence class $R\in \mathbb{Z}/P\mathbb{Z}$, consider the congruence classes $R-S$ for $S\in C$. Then each $y'$ in one of those classes with $(x,y')\in \mathcal{S}$ yields a representation of $(x,y)$ for all sufficiently large $y\in R$. Thus, if there are two or more such elements, $c_y=2_+$ for all sufficiently large $y \in R$. If there is one such element, then $c_y=b_y+1$ for all sufficiently large $y\in R$, and if there is no such element then $c_y=b_y$. Thus, the sequence $(c_y)$ is still eventually periodic with period $P$.

All that's left for us to consider is the effect of $T$, the set of elements in the non-periodic transient phase of the first column. Let $a_y$ be the indicator sequence for the elements of the $x$-column ($a_y=1$ if $(x,y)\in \mathcal{S}$ and $a_y=0$ otherwise). It is clear that the sequence $(a_y)$ is determined by $c_y$ and $T$ recursively as follows:

If $c_y=0$ and there exists a unique $t \in T$ for which $a_{y-t}=1$, then $a_y=1$. If $c_y=1$ and there exists no $t \in T$ for which $a_{y-t}=1$, then $a_y=1$. Otherwise, $a_y=0$.

Let $m$ be the maximal element of $T$. Then, for sufficiently large $y$, $a_y$ is uniquely determined by $a_{y-1}, a_{y-2}, \ldots, a_{y-m}$ and the residue of $y$ modulo $P$. However, since there are only $2^m$ possible combinations of values for $a_{i-1}, a_{i-2}, \ldots, a_{i-m}$, there must eventually be some $y_0$ and $k>0$ such that $a_{y_0-j}=a_{y_0+kP-j}$ for all $1 \le j \le m$. But since $a_{y_0}$ depends solely on $a_{y_0-1}, \ldots a_{y_0-m}$, we must also have $a_{y_0}=a_{y_0-kP}$. Continuing to apply the recursive relation therefore implies that $a_y$ is eventually periodic (when $y \ge y_0$) with period $kP$.
\end{proof}

Note that our proof only provides large bounds on the periods of the columns. Indeed, the period could increase by a factor of up to $2^m$ whenever $x$ increases by $1$. If $T$ contains a single element $t$ (as in the case of $\mathcal{V}$-sets with a single generator on the $y$-axis), then we can actually improve this bound and show that the period at most doubles when $x$ increases by $1$ (consider each congruence class modulo $t$ separately in the third part of the proof). In many cases, however, this doubling rarely occurs, as noted in \cite{noah} for the case of Ulam sets. It would therefore be interesting to try to obtain better bounds on the periods.

\subsection{The $\mathcal{V}$-set with two independent generators}

Contrary to the case of Ulam sets, the $\mathcal{V}$-set on two generators with associated lattice zero does not have a nice simple lattice structure. We can, however, obtain an interesting result about the structure of this $\mathcal{V}$-set, which we will consider to be embedded in $\mathbb{Z}^2$ with initial set $\{(0,1),(1,0)\}$ (since all $\mathcal{V}$-sets with linearly independent generators are structurally equivalent to this one). For simplicity, we will refer to $\mathcal{V}(\{(0,1),(1,0)\})$ simply as $\mathcal{V}_0$.

\begin{figure}[htp!]
    \centering
    \includegraphics[width=10cm]{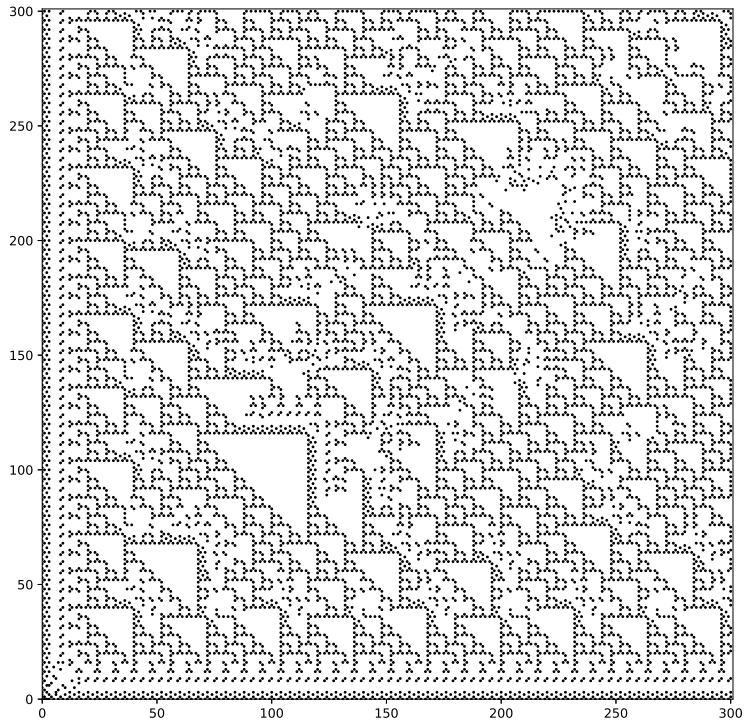}
    \caption{The set $\mathcal{V}_0=\mathcal{V}(\{(0,1),(1,0)\})$.}
\end{figure}

\begin{theorem}
\label{V0}
Let $E$ be the set of elements $(x,y)$ such that $(x,y)\equiv (0,1),(0,3),(1,0),(3,0)$ or $(2,2) \pmod{4}$. Then all the elements of $\mathcal{V}_0$ are in $E$ except $(1,1)$, $(2,0)$, $(0,2)$, $(3,2)$, $(2,3)$, $(6,3)$, $(3,6)$, $(9,6)$, $(6,9)$, $(10,5)$, $(5,10)$, $(14,5)$, and $(5,14)$.
\end{theorem}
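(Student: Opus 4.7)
The strategy is to combine a simple modular closure property with a strong induction on the coordinate-sum $x+y$, treating finitely many small points by direct computation.

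The starting observation is purely arithmetic: let $E^* = \{(0,1),(0,3),(1,0),(3,0),(2,2)\} \subseteq (\mathbb{Z}/4\mathbb{Z})^2$, so $E$ is the full preimage of $E^*$. First I would verify by enumerating the $15$ unordered pairs of elements of $E^*$ that the sum of any two elements of $E^*$ lands outside $E^*$. This is the same kind of closure condition that drives Theorem \ref{E condition}, and it is the only place where the particular choice of $E$ enters the argument.

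Next, I would compute $\mathcal{V}_0$ directly up to some explicit threshold (large enough so that $x+y$ exceeds the sum of coordinates of every element of $T$ and also exceeds any threshold needed for the periodicity arguments below). This base case has two jobs: (a)~show that each of the $13$ listed elements of $T$ genuinely lies in $\mathcal{V}_0$, by exhibiting its unique representation as a sum of two previous elements; and (b)~confirm that no element of $\mathcal{V}_0 \setminus (E \cup T \cup \{(0,1),(1,0)\})$ appears below the threshold, so that the induction has a correct starting point.

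For the inductive step, suppose $(x,y) \in \mathcal{V}_0$ with $x+y$ above the threshold, and assume every earlier element of $\mathcal{V}_0$ is in $E \cup T \cup \{(0,1),(1,0)\}$; I want to show $(x,y) \in E$. Write $(x,y) = (x_1,y_1) + (x_2,y_2)$ for the unique representation. By induction, each summand is in $E$, in $T$, or a generator. If both summands are in $E$, then by the mod-$4$ closure property, $(x,y) \notin E$, but I will argue that this case forces a \emph{second} representation and hence contradicts uniqueness. If at least one summand $t$ lies in the finite set $T \cup \{(0,1),(1,0)\}$, then $(x,y)-t$ must lie in $\mathcal{V}_0$; since this set of possible $t$ is finite and each is smaller than $(x,y)$ for $x+y$ large, I can rule out all remaining residue classes of $(x,y)$ outside $E^*$ case by case.

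The main obstacle is producing the second representation in the first case. Here the plan is to use Theorem \ref{col phenomenon}: the first column of $\mathcal{V}_0$ is the $\mathcal{V}$-sequence on $\{1\}$, which is eventually $\{1,2\} \cup \{2k+1 : k \ge 1\}$ and hence eventually periodic with period $2$, so every column (and by symmetry every row) of $\mathcal{V}_0$ is eventually periodic. Tracking these periods mod~$4$ shows that, for any fixed residue class $(a,b) \notin E^*$ and all sufficiently large $(x,y)$ in that class, there are multiple admissible decompositions $(x,y) = (x_1,y_1)+(x_2,y_2)$ with both summands in $\mathcal{V}_0 \cap E$: for instance, if $x$ is large, one can vary $(x_1,y_1)$ along a periodic column while adjusting $(x_2,y_2)$ accordingly, producing at least two valid pairs. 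The delicate part will be calibrating the threshold so that this argument runs uniformly in all $11$ residue classes outside $E^*$, and handling the small $x$ or small $y$ strip (where the column/row phenomena have not yet stabilized) by falling back to the direct computation from the base case.
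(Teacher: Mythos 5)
Your modular bookkeeping is right: the closure of $E^*=\{(0,1),(0,3),(1,0),(3,0),(2,2)\}$ under addition in $(\mathbb{Z}/4\mathbb{Z})^2$ (sums always land outside $E^*$) is exactly the arithmetic fact the paper relies on, and your instinct that the whole proof reduces to exhibiting second representations for points in the eleven bad residue classes is also correct. But there is a genuine gap in how you propose to produce those second representations. Appealing to Theorem \ref{col phenomenon} gives only \emph{eventual} periodicity of each column, with no uniform control over the period or the onset of periodicity as the column index grows, and --- more importantly --- no positive information about \emph{which} residues a column actually occupies (an eventually periodic column could be eventually empty). Moreover, in any decomposition $(x,y)=(x_1,y_1)+(x_2,y_2)$ of a point with both coordinates large, at least one summand has a large first coordinate, so ``varying $(x_1,y_1)$ along a periodic column'' forces you to know the structure of the far-out column $x_2=x-x_1$, which is precisely what you do not control. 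A base case bounded by $x+y\le N$ cannot repair this, because the information you actually need is an \emph{infinite} characterization of the strips of small $x$ (all $y$) and small $y$ (all $x$).

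The paper's proof supplies exactly that missing ingredient: it explicitly characterizes the first $12$ columns of $\mathcal{V}_0$ for all $y$ (and the first $12$ rows by symmetry), e.g.\ $(0,y)\in\mathcal{V}_0$ iff $y$ is odd or $y=2$, $(1,y)\in\mathcal{V}_0$ iff $4\mid y$ or $y=1$, etc. With these in hand, every bad residue class is killed by two \emph{explicit} representations each of whose summands lies in a known small column or small row: for $x,y$ both odd, $(x,y)=(x,0)+(0,y)=(r_1,4k_2)+(4k_1,r_2)$ with $x=4k_1+r_1$, $y=4k_2+r_2$; for $(4\ell,2k)$, $(4\ell,1)+(0,2k-1)=(4\ell,3)+(0,2k-3)$; and similarly for $x$ odd, $y\equiv 2\pmod 4$. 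Your inductive case split also does not close cleanly: when one summand of the unique representation lies in $T$, say $(x,y)=(1,1)+(x-1,y-1)$ with $(x-1,y-1)\equiv(0,1)\pmod 4$, the point $(x,y)\equiv(1,2)$ sits outside $E^*$ yet nothing in your setup contradicts uniqueness of that representation --- ruling it out again requires the explicit column/row data. So the skeleton of your plan is salvageable, but only after replacing the periodicity appeal with the concrete computation of the small columns and rows, at which point you have essentially reconstructed the paper's argument.
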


\begin{proof}
According to Theorem \ref{col phenomenon}, all the columns are eventually periodic. We will need to explicitly compute the elements of the first $12$ columns of $\mathcal{V}_0$:
\begin{itemize}
    \item $(0,y)\in \mathcal{V}_0 \iff y \equiv 1 \pmod{2}$ or $y=2$
    \item $(1,y)\in \mathcal{V}_0 \iff y \equiv 0 \pmod{4}$ or $y=1$
    \item $(2,y)\in \mathcal{V}_0 \iff y \equiv 2 \pmod{4}$ and $y\ge 6$ or $y\in \{0,3\}$
    \item $(3,y)\in \mathcal{V}_0 \iff y \equiv 0 \pmod{4}$ and $y\ge 12$ or $y\in \{0,2,6\}$
    \item $(4,y)\in \mathcal{V}_0 \iff y \in \{1,7\}$
    \item $(5,y)\in \mathcal{V}_0 \iff y \in \{0,10,14\}$
    \item $(6,y)\in \mathcal{V}_0 \iff y \in \{2,3,9\}$
    \item $(7,y)\in \mathcal{V}_0 \iff y \in \{0,4,16\}$
    \item $(8,y)\in \mathcal{V}_0 \iff y \equiv 3 \pmod{4}$ and $y\ge 19$ or $y\in \{1,13\}$
    \item $(9,y)\in \mathcal{V}_0 \iff y \equiv 0 \pmod{4}$ and $y\ge 16$ or $y\in \{0,6\}$
    \item $(10,y)\in \mathcal{V}_0 \iff y \in \{2,5\}$
    \item $(11,y)\in \mathcal{V}_0 \iff y=0$
\end{itemize}
It is just a matter of computation to check that this does hold. Note that we have symmetric results since everything still holds when we swap $x$ and $y$. Now we split the proof into three parts.

First, we show that $\mathcal{V}_0$ contains no point with both odd $x$ and odd $y$, except $(1,1)$.
If $x$ and $y$ are both odd, we have $(x,y)=(x,0)+(0,y)$. Now write $x=4k_1+r_1$ and $y=4k_2+r_2$, with $r_1,r_2 \in \{1,3\}$. If $k_1\le 2$ or $k_2 \le 2$, then $(x,y)$ is in one of the first $12$ columns or one of the first $12$ rows of $\mathcal{V}_0$. This case has already been dealt with. If $k_1,k_2>3$, however, we have another representation $(x,y)=(r_1,4k_2)+(4k_1,r_2)$. Thus, $(x,y)$ cannot be in $\mathcal{V}_0$.

Second, we now show that $\mathcal{V}_0$ contains no elements with both coordinates even and with at least one of the coordinates divisible by $4$, except $(0,2)$ and $(2,0)$. By symmetry, we can assume without loss of generality that $x$ is divisible by $4$, so $(x,y)=(4\ell,2k)$ for some $\ell$ and $k$. Suppose that $\ell>2$ and $k>1$ (otherwise $(x,y)$ is in one of the first $12$ columns). Then we have two distinct representations $$(x,y)=(4\ell,1)+(0,2k-1)=(4\ell,3)+(0,2k-3).$$ Hence $(x,y)$ is not in $\mathcal{V}_0$.

Third, we show that $\mathcal{V}_0$ contains no point with $x$ odd and $y \equiv 2 \pmod{4}$, except $(3,2)$, $(3,6)$, $(5,10)$, $(5,14)$, and $(9,6)$. Once again, suppose that $x,y \ge 12$. Then we have two distinct representations $$(x,y)=(x-2,0)+(2,y)=(x-d,2)+(d,y-2),$$ where $d=1$ if $x\equiv 3 \pmod{4}$ and $d=3$ if $x\equiv 1 \pmod{4}$. Hence, $(x,y)$ is not in $\mathcal{V}_0$. By symmetry, we also have that $\mathcal{V}_0$ contains no element with odd $y$ and $x \equiv 2 \pmod{4}$ except for $(2,3)$, $(6,3)$, $(10,5)$, $(14,5)$, $(6,9)$. 

Combining these three results, we see that the only combinations of remainders modulo $4$ for $x$ and $y$ that have not been excluded are $(0,1),(0,3),(1,0),(3,0)$ and $(2,2)$, so all the elements of $\mathcal{V}_0$ except for the few listed exceptions are in one of these classes.
\end{proof}

This is very similar to the condition in Theorem \ref{E condition}: it is indeed easy to check that the sum of two elements in $E$ is never in $E$. Our result therefore implies that there is some finite set $T\subset \mathcal{V}_0$ such that the representation of sufficiently large elements in $\mathcal{V}_0$ necessarily uses a summand from $T$.

\begin{corollary}
\label{V0 corollary}
Let $$T=\{(1,1),(2,0),(0,2),(3,2),(2,3),(6,3),(3,6),(9,6),(6,9),(10,5),(5,10),(14,5),(5,14)\}.$$
Then every element of $\mathcal{V}(\{(0,1),(1,0)\})$ outside of $T$ (and the generators) must use an element of $T$ in its (unique) representation as a sum of two previous elements.
\end{corollary}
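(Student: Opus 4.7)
The plan is to deduce this as an essentially immediate consequence of Theorem \ref{V0}. First I would record the closure observation (already flagged in the paragraph preceding the corollary): if $E$ denotes the set of $(x,y) \in \mathbb{Z}_{\ge 0}^2$ whose residue modulo $4$ is one of $(0,1),(0,3),(1,0),(3,0),(2,2)$, then the sum of any two elements of $E$ is never itself in $E$. This is a finite verification: the fifteen pairwise sums of the five permissible residue classes modulo $4$ can be enumerated, and a direct inspection confirms that none of them equals one of those five classes.

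Next I would pick an arbitrary $v \in \mathcal{V}_0$ that is neither a generator nor a member of $T$ and invoke Theorem \ref{V0} to conclude $v \in E$. Write the unique representation as $v = v_1 + v_2$ with $v_1, v_2 \in \mathcal{V}_0$. Applying Theorem \ref{V0} to each $v_i$, we see that each summand is either a generator, an element of $T$, or an element of $E$; since both generators $(0,1)$ and $(1,0)$ themselves lie in $E$, this collapses to $v_i \in E \cup T$ for $i = 1, 2$.

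Finally, if both $v_1$ and $v_2$ were in $E$, the closure observation would force $v_1 + v_2 \notin E$, contradicting $v \in E$. Hence at least one of $v_1, v_2$ must belong to $T$, which is precisely the conclusion of the corollary. No real obstacle arises: all the substantive work is encapsulated in Theorem \ref{V0}, and the corollary is a short packaging of its residue-theoretic content.
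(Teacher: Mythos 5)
Your proposal is correct and follows exactly the route the paper takes: the paper's own proof is the one-line remark that the corollary ``follows directly from Theorem \ref{V0} and the fact that the sum of two elements in $E$ is never in $E$,'' and your write-up simply makes explicit the finite residue check and the observation that the generators lie in $E$. Nothing further is needed.
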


\begin{proof}
This follows directly from Theorem \ref{V0} and the fact that the sum of two elements in $E$ is never in $E$.
\end{proof}

Even though this is enough to imply regularity in one dimension (Theorem \ref{E condition}), it is unfortunately not necessarily the case in higher dimensions, and the structure of $\mathcal{V}_0$ still appears to be quite hard to describe. Theorem \ref{col phenomenon} implies that the columns and rows are eventually periodic, but the transient phases seem to be too long for a lattice structure to emerge.

We believe that Corollary \ref{V0 corollary} could help prove that $\mathcal{V}_0$ has positive asymptotic density, which would be an interesting result for a $\mathcal{V}$-set with no lattice structure. It has also allowed us to efficiently compute, with a computer, all the elements of $\mathcal{V}_0$ with $x$ and $y$ up to $50000$. These computations showed that the density empirically seems to be approximately $0.1218$ (density of $0.05908$ for the points of type $(0,1)/(1,0)$, $0.05959$ for those of type $(0,3)/(3,0)$ and $0.00314$ for those of type $(2,2)$). Note that the points of type $(2,2)$ are much rarer than the other types.

\section{Conclusion and open problems} \label{conclusion}

We conclude by gathering a few open questions that arose during the present investigation.

\subsection{Complete characterization of elements in $\mathcal{U}(\{0,1\})$} We fully characterized all terms in $\mathcal{U}(\{0,1\})$ with exactly one $1$ (and, by Theorem \ref{thm:symmetry}, those with exactly one $0$). We also investigated symmetries (reverses, bit-wise complements, and palindromes) and general conditions for words with exactly two $1$'s. However, the full characterization of all words in $\mathcal{U}(\{0,1\})$ still remains unsolved. In Theorem \ref{thm:one1}, we found that the condition for a word with one $1$ to be in the Ulam set is a modular restriction of a binomial coefficient. We predict that a similar modular restriction must suffice for a binomial coefficient or sum of binomial coefficients that correspond to a word with more than one $1$. 

In particular, finding the exact number of words of length $n$ in $\mathcal{U}(\{0,1\})$ and the asymptotic density of the Ulam set (Conjecture \ref{conj:density}) remains an interesting problem for future research.

\subsection{Ulam sets in matrix groups} The idea of Ulam sets arising from non-abelian settings can be extended to matrices, where we use the determinant of a matrix as our notion of size. To ensure that this notion is suitable for generating an Ulam set, our starting matrices must have determinants greater than $1$. The generating matrices must also not commute.

If the generating matrices are such that every matrix representable as a product of the starting matrices must have a unique representation, then any matrix can be represented uniquely as a word on the alphabet containing the starting matrices. Hence, the Ulam set on two generators in the matrix setting would be isomorphic to $\mathcal{U}(\{0,1\}).$ Therefore, Ulam sets in matrix groups extend the study of Ulam sets in free groups. Matrices allow us to add new conditions on our set; we focus on the case where there is a relation between the generating matrices that would allow for a matrix to have a non-unique representation as the product of the starting generators. In particular, the Ulam set $\mathcal{U}(\{A, B\})$, where
\begin{center}
    $A=\begin{pmatrix}
    0 & 4 \\
    -4 & 4
    \end{pmatrix}
    \quad \text{and} \quad 
    B=\begin{pmatrix}
    0 & 8 \\
    -8 & 0
    \end{pmatrix}$
\end{center}
satisfy $A^3=B^2$, is an interesting area for future investigation.

\subsection{Decomposition into sums or products of the generators} We believe that the function $\alpha$ keeping track of the contribution of each generator to an element could be very important for a better understanding of Ulam sets.
In particular, in commutative settings, further work on Conjecture \ref{alpha1/alpha2 conjecture} stating that the ratio $\frac{\alpha_1}{\alpha_2}$ stabilizes might be interesting. A first step toward this conjecture would be to improve the bounds on $\frac{\alpha_1}{\alpha_2}$ given by Theorems \ref{alpha1/alpha2} and \ref{alpha1/alpha2 a=b}.

Note that we can define a similar $\alpha$ function in noncommutative settings. The ratio does not necessarily seem to stabilize in this case, but we believe that it is still possible to bound it in some cases. In particular, for Ulam sets of matrices where the only relation between the two generators $A$ and $B$ is $A^3=B^2$, we believe that $\frac{\alpha_A}{\alpha_B}\ge 1$ for all elements except the generator $B$ itself. There also seems to be an infinite class of matrices in this Ulam set satisfying $\frac{\alpha_A}{\alpha_B}= 1$. Further research on the noncommutative version of this phenomenon could also be enlightening.

\subsection{Regularity conjecture} We conjectured that, for even $a\ge 2$ and sufficiently large $b$ (depending on $a$), the Ulam set with associated lattice generated by $(-a,b)$ is always regular. We settled the case $a=2$ with Theorems \ref{regularity a=2} and \ref{b=2^e}, and the case with $a=4$ and $b\equiv 1 \pmod{4}$ has already been solved in \cite{regularity 4 v}. We believe that similar work could be applied for other small cases, but new methods will likely be necessary for the general case. Since proofs of regularity appear to often require a lot of casework, we think that computer-assisted proofs may be helpful for further advances.

Note also that we focused mostly on Ulam sets but that we observed regularity for many $\mathcal{V}$-sets in $\mathbb{Z}\times(\mathbb{Z}/n\mathbb{Z})$ as well. A deeper study of these cases could bring a better understanding of regularity phenomenons.

\subsection{Better conditions for finiteness} Another interesting path for further research is the characterization of finite Ulam sets based on their sets of generators. We conjectured that Ulam sets with two generators (in commutative groups) cannot be finite. Any new partial result on this conjecture would be quite interesting. Moreover, we studied finiteness only in commutative groups, but this could also be studied in the noncommutative case. 

\subsection{Density of each row in $\mathbb{Z}\times(\mathbb{Z}/n\mathbb{Z})$} We observed that, in general, the elements of Ulam sets in $\mathbb{Z}\times(\mathbb{Z}/n\mathbb{Z})$ are not equally distributed between the different rows (values of $y$). For example, if we take $n=3$ and initial set $\{(1,1),(1,2)\}$, then there are very few points in the Ulam set with $y$-coordinate $0$, compared to $y=1$ or $y=2$. Studying this phenomenon deeper could be a promising avenue for future research.

\begin{figure}[htp!]
    \centering
    \includegraphics[width=14cm]{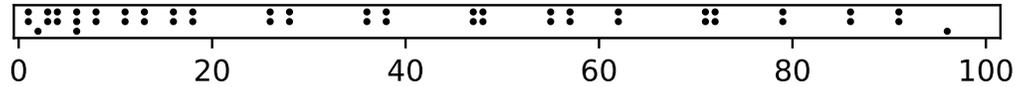}
    \caption{The Ulam set generated by $S=\{(1,1),(1,2)\}$ in $\mathbb{Z}\times (\mathbb{Z}/3\mathbb{Z})$.}
\end{figure}

\section*{Acknowledgements}
This research was conducted under the auspices of Noah Kravitz's summer research program. We wish to thank him for mentoring us throughout this project and for helping us with the writing of this paper. We also thank Borys Kuca for pointing out the connection between our Theorem \ref{E condition} and Theorem 6.3.2 from \cite{ross-thesis}.  This paper benefited from the suggestions of an anonymous referee.

\end{document}